\documentclass[a4paper,12pt,intlimits,oneside,reqno]{amsart}
\usepackage{enumerate}
\usepackage{amsfonts,amsmath, amsxtra,amssymb,latexsym, amscd,amsthm}
\allowdisplaybreaks
\usepackage{latexsym,amssymb}
\newtheorem{theorem}{Theorem}[section]

\theoremstyle{corollary}
\newtheorem{corollary}[theorem]{Corollary}

\theoremstyle{definition}
\newtheorem{definition}[theorem]{Definition}

\theoremstyle{proposition}
\newtheorem{proposition}[theorem]{Proposition}

\theoremstyle{remark}

\numberwithin{equation}{section}

%    Absolute value notation

%    Blank box placeholder for figures (to avoid requiring any
%    particular graphics capabilities for printing this document).

\textwidth16cm \textheight21cm \evensidemargin.1cm
\oddsidemargin.2cm

\addtolength{\headheight}{3.2pt}
\newcommand{\comment}[1]{}
\allowdisplaybreaks

\begin{document}

\title [Multilinear Hausdorff operator]{Two Weighted estimates for  multilinear Hausdorff Operators on the Morrey-Herz Spaces}
\author{Nguyen Minh Chuong}

\address{Institute of mathematics, Vietnamese  Academy of Science and Technology,  Hanoi, Vietnam.}
\email{nmchuong@math.ac.vn}
\thanks{The first author of this paper is supported by Vietnam National Foundation for Science and Technology Development (NAFOSTED) under grant number 101.02-2014.51}

\author{Dao Van Duong}
\address{School of Mathematics,  Mientrung University of Civil Engineering, Phu Yen, Vietnam.}
\email{daovanduong@muce.edu.vn}

\author{Nguyen Duc Duyet}
\address{Hanoi Pedagogical University 2, Vinh Phuc, Vietnam.}
\email{duyetnguyenduc@gmail.com}
\keywords{Multilinear operator, Hausdorff operator, Hardy-Ces\`{a}ro operator, two weighted Morrey-Herz space, Muckenhoupt weights.}
\subjclass[2010]{Primary 42B20, 42B25; Secondary 42B99}
\begin{abstract}
The purpose of this paper is to establish some neccessary and sufficient conditions for the boundedness of a general class of multilinear Hausdorff operators that acts on the product of some two weighted function spaces such as the two weighted Morrey, Herz and Morrey-Herz spaces. Moreover, some sufficient conditions for the boundedness of multilinear Hausdorff operators on the such spaces with respect to the Muckenhoupt weights are also given.
\end{abstract}

\maketitle

\section{Introduction}\label{section1}
The one dimensional Hausdorff operator is defined by
\begin{align}
\mathcal{H}_{\Phi}(f)(x)=\int\limits_{0}^\infty{\frac{\Phi(t)}{t}f\left(\frac{x}{t}\right)dt},
\end{align}
where $\Phi$ is an integrable function on the positive half-line. It is well known that the Hausdorff operator is deeply rooted in the study of one dimensional
Fourier analysis, especially, it is closely related to the summability of the classical Fourier series  (see, for instance,  \cite{CFL2012}, \cite{Liflyand2}, \cite{Hausdorff}, \cite{Hurwitz},  and the references therein). 
\vskip 5pt
It is obvious that if $\Phi(t)=\frac{\chi_{(0,\infty)}(t)}{t}$, then $\mathcal{H}_{\Phi}$ reduces to the Hardy operator defined by
$$\mathcal{H}(f)(x)=\frac{1}{x}\int_0^xf(t)dt,$$
which is one of the most important averaging operators in harmonic analysis. A celebrated Hardy integral inequality \cite{Hardy} can be formulated as follows
$$\|\mathcal{H}(f)\|_{L^p(\mathbb R)}\leq \frac{p}{p-1}\|f\|_{L^p(\mathbb R)}, \text{ \;for all \;} 1<p<\infty,$$
and the constant $\frac{p}{p-1}$ is the best possible. 
\vskip 5pt
It is worth pointing out that if the kernel function $\Phi$ is taken appropriately, then  the Hausdorff operator also reduces to many other classcial operators in analysis such as  the Ces\`{a}ro operator, Hardy-Littlewood-P\'{o}lya operator,  Riemann-Liouville fractional integral operator and Hardy-Littlewood average operator (see, e.g., \cite{An1996, Christ, FGLY2015, Miyachi} and references therein). For further readings on the Hardy type inequality for many classical operators in analysis, see \cite{AS1988, An1996, Batuev-Stepanov, Christ, Heing, Krepela, Kalybay, Prokhorov2003, Prokhorov2013, Prokhorov2013-1, Stempak, Stepanov90} for more details.
\vskip 5pt
The Hausdorff operator is extended to the high dimensional space by Brown and M\'{o}ricz \cite{BM2002} and independently by Lerner and Liflyand \cite{LL2007}. To be more precise, let $\Phi$ be a locally integrable function on $\mathbb R^n$. The Hausdorff operator $\mathcal{H}_{\Phi,A}$ associated to the kernel function $\Phi$ is then defined as
\begin{align}
{\mathcal{H}}_{\Phi,A}(f)(x)=\int_{\mathbb{R}^n}\frac{\Phi(y)}{|y|^n}f(A(y)x)dy, ~ x\in \mathbb{R}^n,
\end{align}
where $\Phi$ is a locally integrable function on $\mathbb{R}^n$ and $A(y)$ is an $n\times n$ matrix satisfying $\det A(y)\ne 0$ for almost everywhere $y$ in the support of $\Phi$. It should be pointed out that if we take  $\Phi(t)=|t|^n\psi(t_1)\chi_{[0,1]^n}(t)$ and $A(t)= t_1.I_n$ ($I_n$ is an identity matrix), for $t=(t_1,t_2,...,t_n)$, where $\psi:[0,1]\to [0,\infty)$ is a measurable function,  $\mathcal{H}_{\Phi,A}$ then reduces to the weighted Hardy-Littlewood average operator due to Carton-Lebrun and Fosset \cite{Carton-Lebrun1984} defined by
\begin{align}
{\mathcal{H}}_\psi f(x)=\int_0^1 f(tx)\psi(t)dt, ~ x\in\mathbb{R}^n.
\end{align} 
Under certain conditions on $\psi$, Carton-Lebrun and Fosset \cite{Carton-Lebrun1984} proved that ${\mathcal{H}}_\psi$ maps $L^p (\mathbb{R}^n)$ into itself for all $1<p<\infty$. They also pointed out that the operator ${\mathcal{H}}_\psi$ commutes with the Hilbert transform when $n=1$, and with certain Calder\'{o}n-Zygmund singular integral operators including the Riesz transform when $n\ge 2$. A further extension of the results obtained in \cite{Carton-Lebrun1984} to the Hardy space and BMO space is due to Xiao \cite{Xiao2001}. 
\vskip 5pt
By letting $\Phi(y)=|y|^n\psi(y_1)\chi_{[0,1]^n}(y)$ and $A(y)=s(y_1).I_n$, where $s:[0,1]\to \mathbb{R}$ is a measurable function, Chuong and Hung \cite{CH2014} studied the operator $U_{\psi,s}$, so-called the generalized Hardy-Ces\`{a}ro operator associated with the parameter curve $s(x,t):=s(t)x$, defined as follows
\begin{align*}
U_{\psi,s} f(x)=\int_0^1 f(s(t)x)\psi(t)dt,\;\; x\in\mathbb R^n.
\end{align*} 
\vskip 5pt
In recent years, the theory of weighted Hardy-Littlewood average operators,
Hardy-Ces\`{a}ro  operators and Hausdorff operators have been significantly developed
into different contexts, and studied on many function spaces such as  Lebesgue, Morrey, Herz, Morrey-Herz, Hardy and BMO spaces including the weighted settings.  For more details, one may find in \cite{Chuong2016, Chuongduong2016, CDH2016, CFL2012, CH2014, Ge, Liflyand2, Li2008, LM2000, LL2007, Miyachi, Mo2005, RF2016} and references therein. On the other hand, it is useful to observe that Coifman and Meyer \cite{Coifman-Meyer75} discovered a multilinear point of view in their study of certain singular integral operators. Thus, the research of the theory of multilinear operators is not only attracted by a pure question to generalize the theory of linear ones but also by their deep applications in harmonic analysis. In 2015, Fu et al. \cite{FGLY2015} introduced the weighted multilinear Hardy operator of the form
\begin{align}
{\mathcal{H}}_\psi^m(\vec{f})(x)=\int_{[0,1]^n} \left(\prod_{i=1}^m f_i(y_ix) \right) \psi(y)dy, ~ x\in\mathbb{R}^n,
\end{align}
where  $\psi: [0,1]^n\to [0,\infty)$ is an integrable function, $\vec{f}=(f_1,...,f_m)$,  $f_i, i=1,...,m,$ are complex-valued measurable functions on $\mathbb{R}^n$.  They  obtained the sharp bounds for ${\mathcal{H}}_\psi^m$ on the product of Lebesgue spaces and central Morrey spaces. As a consequence, these results are also applied to prove sharp estimates of some inequalities due to Riemann-Liouville and Weyl. Later, Hung and Ky \cite{HK2015} studied the weighted multilinear Hardy-Ces\`{a}ro type operators, which are generalized of weighted multilinear Hardy operators, as follows
\begin{align}
U_{\psi,\vec{s}}^{m,n}(\vec{f})(x)=\int_{[0,1]^n} \left(\prod_{i=1}^m f_i(s_i(y)x) \right) \psi(y) dy, ~ x\in\mathbb{R}^n,
\end{align}
where $s_1,...,s_m:[0,1]^n\to \mathbb{R}$. They are also obtained the sharp bounds of weighted multilinear Hardy-Ces\`{a}ro operators that acts on the product of weighted Lebesgue spaces and central Morrey spaces.
\vskip 5pt
Very recently,  Chuong, Duong and Dung \cite{CDD2017} have introduced and studied a more general class of multilinear Hausdorff operators which is defined by
\begin{align}
{\mathcal{H}}_{\Phi,\vec{A}}(\vec{f})(x)=\int_{\mathbb{R}^n}\frac{\Phi(y)}{|y|^n}\prod_{i=1}^m f_i(A_i(y)x)dy,~ x\in \mathbb{R}^n.
\end{align}
Let us take measurable functions $s_1(y), ..., s_m(y)\not=0$  almost everywhere in $\mathbb R^n$. Consider a special case when the matrices $A_i(y)={\rm diag }[ s_{i}(y),...,s_{i}(y) ]$, for all $i=1,...,m$. Then, we aslo study in this paper the hybrid multilinear operator of the form as follows
\begin{align}\label{var-multi-haus}
\mathcal{H}_{\phi, \vec{s}}(\vec{f})(x)=\int_{\mathbb{R}^n}\left(\prod_{i=1}^m f_i(s_i(y)x)\right)\phi(y)dy,~ x\in \mathbb{R}^n.
\end{align}
By letting $\phi(y)=\psi(y)\chi_{[0,1]^n}(y)$, it is clear that the operator $\mathcal{H}_{\phi, \vec{s}}$ reduces to the operator $U_{\psi,\vec{s}}^{m,n}$. The multilinear Hausdorff operators are extended to study on some function spaces in the real field as well as $p$-adic numbers field. The interested reader is refered to the works \cite{CDD2017, HausdoffCDD1, CDD2019-1, CHH2017, FGLY2015, HK2015, Krepela} for more details.
\vskip 5pt
It is very important to study weighted estimates for many classcial operators in
harmonic analysis. B. Muckenhoupt [30] first discovered the weighted norm
inequality for the Hardy-Littlewood maximal operators in the real setting and, then, the class of $A_p$ weights is introduced, which is well known in harmonic analysis. Moreover, Coifman and Fefferman \cite{CF1974} extended the theory of Muckenhoupt
weights to general Calder\'{o}n-Zygmund operators. They also proved that $A_p$
weights satisfy the crucial reverse H\"{o}lder condition. For further information on the Muckenhoupt weights as well as its deep applications in harmonic analysis, in the theory of extrapolation of operators and in nonlinear partial differential equations,  see \cite{CEKMM2007, GrafakosModern, Stein1993} and the references therein. In the recent years, there is an increasing interest on the study of the problems concerning the two-weight norm inequality for many fundamental operators in harmonic anlysis, for example, such as maximal operator, the Hilbert transform, singular integral operator, Hardy operator and Hausdorff operator. More details, one may find in \cite{CDD2019-2,  Krepela, Sawyer, Stepanov2011, Tanaka} and the references therein. It is therefore of interest to extend the study of the two-weight norm inequalities for the multilinear Hausdorff operators. In this paper, we give some neccessary and sufficient conditions for the boundedness of the multilinear Hausdorff operators that acts on the product of  the two weighted Morrey, Herz and Morrey-Herz spaces. Also, in the setting of the function spaces with the Muckenhoupt weights, some sufficient conditions for the boundedness of multilinear Hausdorff operators are also discussed.
\vskip 5pt
Our paper is organized as follows. In Section 2, we introduce the some weighted function spaces such as weighted Lebesgue spaces, two weighted central Morrey spaces, two weighted Herz spaces and two weighted Morrey-Herz spaces. Besides that, we also introduce the class of Muckenhoupt weights. The main theorems and their proofs in this paper are given in Section 3.
%%%%%%%%%%%%%%%%%%%%%%%%%%%%%%%%%%%%%%%%%%
\section{Preliminaries}\label{section2}
We start with this section by recalling some standard definitions and notations. By $\|T\|_{X\to Y}$, we denote the norm of $T$ between two normed vector spaces $X$ and $Y$. The letter $C$ denotes a positive constant which is independent of the main parameters, but may be different from line to line. For any $a\in\mathbb R^n$ and $r>0$, we shall denote by $B(a,r)$ the ball centered at $a$ with radius $r$. We also denote $S_{n-1}=\{ x\in\mathbb{R}^n: |x|=1\}$ and $|S_{n-1}|=\frac{2\pi^{\frac{n}{2}}}{\Gamma\left( \frac{n}{2}\right)}$. For any real number $p>0$, denote by $p'$ conjugate real number of $p$, i.e. $\frac{1}{p}+\frac{1}{p'}=1$.
\vskip 5pt
Next, we write $a \lesssim b$ to mean that there is a positive constant $C$ , independent of the main parameters, such that $a\leq Cb$. The symbol $f\simeq g$ means that $f$ is equivalent to $g$ (i.e.~$C^{-1}f\leq g\leq Cf$). Throughout the paper,  the weighted functions will be denoted local integral nonnegative  measurable functions on $\mathbb{R}^n$. For any measurable set $E$, we denote by $\chi_E$ its characteristic function, by $|E|$ its Lebesgue measure, and $\omega(E)=\int_{E}\omega(x)dx$ for any weighted function $\omega$. Let $L^q_{\omega}(\mathbb{R}^n)$ $(0<q<\infty)$ be the space of all Lebesgue measurable functions $f$ on $\mathbb{R}^n$ such that
\begin{align*}
\|f\|_{L^q_{\omega}(\mathbb{R}^n)}=\left(\int_{\mathbb{R}^n}|f(x)|^q\omega(x)dx \right)^{\frac{1}{q}}<\infty.
\end{align*} 
The space $L^q_\text {loc}(\omega, \mathbb R^n)$ is defined as the set of all measurable functions $f$ on $\mathbb R^n$ satisfying $\int_{K}|f(x)|^q\omega(x)dx<\infty$ for any compact subset $K$ of $\mathbb R^n$. The space $L^q_\text {loc}(\omega, \mathbb R^n\setminus\{0\})$ is also defined in a similar way to the space  $L^q_\text {loc}(\omega, \mathbb R^n)$.
\vskip 5pt
In what follows, denote $\chi_k=\chi_{C_k}$, $C_k=B_k\setminus B_{k-1}$ and $B_k = \big\{x\in \mathbb R^n: |x| \leq 2^k\big\}$, for all $k\in\mathbb Z.$  
Now, we are in a position to give some definitions of the two weighted Morrey, Herz  and Morrey-Herz spaces. For further information on these spaces as well as their deep applications in analysis, the interested readers may refer to the works  \cite{ALP2000}, \cite{CDH2016}, and \cite{CHH2017}, especially, to the monograph \cite{LYH2008} and references therein.
\begin{definition}
Let $0<q<\infty$ and $1+\lambda q>0, \lambda\in\mathbb{R}$. Suppose $v, \omega$ are  two weighted functions. Then,  the two weighted central Morrey space is defined by
\begin{align*}
\dot{M}^{q,\lambda}_{v,\omega}(\mathbb{R}^n)=\{f\in L^q_{\rm loc}(\omega): \|f\|_{\dot{M}^{q,\lambda}_{v,\omega}(\mathbb{R}^n)}<\infty \},
\end{align*}
where
\begin{align*}
\|f\|_{\dot{M}^{q,\lambda}_{v,\omega}(\mathbb{R}^n)}=\sup\limits_{R>0}\frac{1}{v(B(0,R))^{\lambda +\frac{1}{q}}}\|f\|_{L^q_\omega (B(0,R))}.
\end{align*}
\end{definition}
In particular, if $v=\omega$, then we write $\dot{M}^{q,\lambda}_{v,\omega}(\mathbb{R}^n)=\dot{M}^{q,\lambda}_{\omega}(\mathbb{R}^n)$ which is called the weighted central Morrey space.

\begin{definition}
Let $0<p<\infty, 1<q<\infty$ and $\alpha\in\mathbb{R}$. Let $v$ and $\omega$ be two  weighted functions. The homogeneous two weighted Herz space $\dot{K}^{\alpha,p,q}_{v,\omega}(\mathbb{R}^n)$ is defined to be the set of all $f\in L_{\rm loc}^q(\mathbb{R}^n\backslash\{0\},\omega)$ such that
\begin{align*}
\|f\|_{\dot{K}^{\alpha,p,q}_{v,\omega}(\mathbb{R}^n)}=\left(\sum\limits_{k=-\infty}^\infty  v(B_k)^{\frac{\alpha p}{n}}\|f\chi_k\|^p_{L^q_\omega (\mathbb{R}^n)} \right)^{\frac{1}{p}}<\infty.
\end{align*}
\end{definition}
Remark that if $v$ is a constant function, then $\dot{K}^{\alpha,p,q}_{v,\omega}(\mathbb{R}^n):=\dot{K}^{\alpha}_{p,q}(\omega, \mathbb{R}^n)$ is the weighted Herz space studied in \cite{CDH2016} and \cite{LYH2008}.
\begin{definition}
Let $\alpha\in\mathbb{R}, 0<p< \infty,\lambda\ge 0$ and $v, \omega$ be weighted functions. Then, the two weighted Morrey-Herz space $M\dot{K}_{v,\omega}^{\alpha,\lambda,p,q}(\mathbb{R}^n)$ is defined as the space of all functions $f\in L_{\rm loc}^q(\mathbb{R}^n\backslash\{0\},\omega)$ such that $\|f\|_{M\dot{K}_{v,\omega}^{\alpha,\lambda,p,q}(\mathbb{R}^n)}<\infty$, where

\begin{align*}
\|f\|_{M\dot{K}_{v,\omega}^{\alpha,\lambda,p,q}(\mathbb{R}^n)}=\sup\limits_{k_0\in\mathbb{Z}}\left(v(B_{k_0})^{-\frac{\lambda}{n}}\left(\sum\limits_{k=-\infty}^{k_0}v(B_k)^{\frac{\alpha p}{n} }\|f\chi_k\|^p_{L^q_\omega (\mathbb{R}^n)} \right)^{\frac{1}{p}}\right).
\end{align*}
\end{definition}
In particular, when $\omega=v$, we denote $M\dot{K}_{p,q}^{\alpha,\lambda}(\omega,\mathbb{R}^n)$ instead of $M\dot{K}_{v,\omega}^{\alpha,\lambda,p,q}(\mathbb{R}^n)$.
Note that if $\lambda=0$, it is easy to see that $M\dot{K}_{v,\omega}^{\alpha,\lambda,p,q}(\mathbb{R}^n)=\dot{K}_{v,\omega}^{\alpha,p,q}(\mathbb{R}^n)$. Consequently, the two weighted Herz space is a special case of the two weighted Morrey-Herz space.  Some applications of two weighted Morrey-Herz spaces to the Hardy-Ces\`{a}ro operators may be found, for example, in the works \cite{CDH2016}, \cite{CHH2017}. It should be pointed out that the Herz spaces are natural generalization of the Lebesgue spaces with power weight.
\vskip 5pt
Next, we will recall some preliminaries on the theory of $A_p$ weights which was first introduced by Benjamin Muckenhoupt \cite{Mu1972} in the Euclidean spaces in order to study the weighted $L^p$ boundedness of Hardy-Littlewood maximal functions. Let us recall that a weight is a nonnegative, locally integrable function on $\mathbb{R}^n$.

\begin{definition}
Let $1<\xi<\infty$. It is said that a weight $\omega\in A_\xi(\mathbb{R}^n)$ if there exists a constant $C>0$ such that for all balls $B\subset\mathbb R^n$,
\begin{align*}
\left(\frac{1}{|B|}\int_B \omega(x)dx \right) \left(\frac{1}{|B|}\int_B \omega(x)^{-\frac{1}{\xi-1}}dx \right)^{\xi-1}\le C.
\end{align*}
We say that a weight $\omega\in A_1(\mathbb{R}^n)$ if there is a constant $C>0$ such that for all balls $B\subset\mathbb R^n$,
\begin{align*}
\frac{1}{|B|}\int_B\omega(x)dx\le C \mathop {\rm essinf}\limits_{x\in B} \omega(x).
\end{align*}
We denote
\begin{align*}
A_\infty(\mathbb{R}^n)=\mathop{\cup}\limits_{1\le \xi <\infty}A_\xi(\mathbb{R}^n).
\end{align*}
\end{definition}
Let us now recall the following standard results related to the Muckenhoupt weights. For further readings on the class of the Muckenhoupt weights as well as its deep applications, see in the monographs \cite{GrafakosModern} and \cite{Stein1993}.

\begin{proposition}
(i) $A_\xi(\mathbb{R}^n)\varsubsetneq A_\eta(\mathbb{R}^n)$, for $1\le\xi<\eta<\infty$.\\
(ii) If $\omega \in A_\xi(\mathbb{R}^n), 1<\xi<\infty$, then there is an $\varepsilon>0$ such that $\xi-\varepsilon>1$ and $\omega\in A_{\xi-\varepsilon}(\mathbb{R}^n)$.
\end{proposition}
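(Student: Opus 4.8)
The plan is to establish the two assertions by independent elementary arguments, the second resting on the reverse Hölder inequality recalled in the introduction. For (i), fix $1<\xi<\eta<\infty$ and $\omega\in A_\xi(\mathbb{R}^n)$. In comparing the two defining functionals the factor $\frac{1}{|B|}\int_B\omega\,dx$ is common, so it suffices to dominate the dual average; and since $\frac{1}{\eta-1}<\frac{1}{\xi-1}$, I would invoke the monotonicity of $L^p$-means on the probability space $(B,|B|^{-1}dx)$ applied to $\omega^{-1}$, that is $\bigl(\tfrac{1}{|B|}\int_B\omega^{-1/(\eta-1)}dx\bigr)^{\eta-1}\le\bigl(\tfrac{1}{|B|}\int_B\omega^{-1/(\xi-1)}dx\bigr)^{\xi-1}$, which is just Jensen's inequality for the convex map $t\mapsto t^{(\eta-1)/(\xi-1)}$. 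Multiplying by $\frac{1}{|B|}\int_B\omega\,dx$ and taking the supremum over balls yields the $A_\eta$ bound from the $A_\xi$ bound, hence $A_\xi\subseteq A_\eta$. The borderline case $\xi=1$ is handled directly: the inequality $\frac{1}{|B|}\int_B\omega\,dx\le C\,\mathrm{ess\,inf}_{B}\,\omega$ forces $\frac{1}{|B|}\int_B\omega^{-1/(\eta-1)}dx\le(\mathrm{ess\,inf}_{B}\,\omega)^{-1/(\eta-1)}$, and combining the two again produces the $A_\eta$ condition. To see the inclusion is \emph{proper}, I would exhibit a power weight: $\omega(x)=|x|^a$ belongs to $A_p(\mathbb{R}^n)$ exactly when $-n<a<n(p-1)$ (and to $A_1$ when $-n<a\le 0$), so choosing $a=n(\xi-1)$ if $\xi>1$, or any $a\in(0,n(\eta-1))$ if $\xi=1$, gives a weight lying in $A_\eta\setminus A_\xi$.

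For (ii), the self-improvement property, I would pass to the dual weight $\sigma=\omega^{-1/(\xi-1)}$. The $A_\xi$ condition says precisely that $\sigma\in A_{\xi'}(\mathbb{R}^n)$ with $\xi'=\xi/(\xi-1)$, and in particular $\sigma\in A_\infty(\mathbb{R}^n)$; hence by the reverse Hölder inequality for Muckenhoupt weights there exist $\delta>0$ and $C>0$ with $\bigl(\frac{1}{|B|}\int_B\sigma^{1+\delta}dx\bigr)^{1/(1+\delta)}\le\frac{C}{|B|}\int_B\sigma\,dx$ for every ball $B$. I would then choose $\varepsilon>0$ small enough that $\xi-\varepsilon>1$ and $\theta:=\frac{\varepsilon}{\xi-1-\varepsilon}\le\delta$. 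Since $\omega^{-1/(\xi-\varepsilon-1)}=\sigma^{1+\theta}$ and $(1+\theta)(\xi-\varepsilon-1)=\xi-1$, the dual average of the $A_{\xi-\varepsilon}$ functional is $\frac{1}{|B|}\int_B\sigma^{1+\theta}dx$, which by monotonicity of means (to drop the exponent from $1+\delta$ to $1+\theta$) together with the reverse Hölder bound is at most a constant multiple of $\bigl(\frac{1}{|B|}\int_B\sigma\,dx\bigr)^{1+\theta}$. Raising to the power $\xi-\varepsilon-1$ and multiplying by $\frac{1}{|B|}\int_B\omega\,dx$ reconstitutes exactly the $A_\xi$ functional, so the $A_{\xi-\varepsilon}$ constant is controlled by $C^{\xi-1}$ times the $A_\xi$ constant, giving $\omega\in A_{\xi-\varepsilon}(\mathbb{R}^n)$.

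The hard part is entirely concentrated in the reverse Hölder inequality feeding Part (ii): the remaining manipulations are bookkeeping with exponents and Jensen's inequality, but the genuine improvement $\delta>0$ is what makes the class $A_\xi$ ``open on the left'' and is not extractable from the definition alone. In a fully self-contained treatment I would prove it by a Calder\'{o}n--Zygmund stopping-time (or good-$\lambda$) decomposition on the level sets of $\sigma$ relative to $B$; in the present context it is legitimate to quote it as the classical Coifman--Fefferman result already mentioned above.
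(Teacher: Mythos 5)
The paper does not actually prove this proposition: it is recalled as a standard fact about Muckenhoupt weights, with the reader referred to the monographs of Grafakos and Stein, so there is no internal argument to compare yours against. Your proof is correct, and it is essentially the classical argument from those references. In (i), the Jensen (power-mean) comparison
\begin{align*}
\Bigl(\tfrac{1}{|B|}\int_B \omega^{-1/(\eta-1)}\,dx\Bigr)^{\eta-1}\le \Bigl(\tfrac{1}{|B|}\int_B \omega^{-1/(\xi-1)}\,dx\Bigr)^{\xi-1}
\end{align*}
is the right mechanism for the inclusion, the $\xi=1$ case is handled correctly via $\omega\ge \mathop{\rm essinf}_B\omega$, and the strictness via $|x|^{n(\xi-1)}$ (resp.\ $|x|^a$, $0<a<n(\eta-1)$, when $\xi=1$) works; note that the non-membership half is even immediate, since $\omega^{-1/(\xi-1)}=|x|^{-n}$ fails to be integrable on balls centered at the origin. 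In (ii), the duality $\omega\in A_\xi\Leftrightarrow\sigma=\omega^{-1/(\xi-1)}\in A_{\xi'}$, the reverse H\"{o}lder inequality for $\sigma\in A_\infty$, and the exponent bookkeeping $(1+\theta)(\xi-\varepsilon-1)=\xi-1$ with $\theta=\varepsilon/(\xi-1-\varepsilon)\le\delta$ all check out, yielding the $A_{\xi-\varepsilon}$ bound $C^{\xi-1}[\omega]_{A_\xi}$. The only two ingredients you quote rather than prove --- the characterization $|x|^a\in A_p(\mathbb{R}^n)$ iff $-n<a<n(p-1)$, and the Coifman--Fefferman reverse H\"{o}lder inequality --- are exactly at the level of what the paper itself recalls without proof (it invokes reverse H\"{o}lder through \cite{CF1974} and \cite{IMS2015}), so your treatment is, if anything, more self-contained than the source; an alternative formulation of (ii), equivalent to yours, applies reverse H\"{o}lder to $\sigma$ and then rephrases the conclusion as $\sigma\in A_{\xi'-\varepsilon'}$ before dualizing back, but this is the same proof in different clothing.
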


A close relation to $A_\infty(\mathbb{R}^n)$ is the crucial reverse H\"{o}lder condition due to Coifman and Fefferman \cite{CF1974}. If there exist $r>1$ and a fixed constant $C$ such that
\begin{align*}
\left(\frac{1}{|B|}\int_B\omega(x)^r dx  \right)^{\frac{1}{r}}\le\frac{C}{|B|}\int_B\omega(x)dx,
\end{align*}
for all balls $B\subset \mathbb{R}^n$, we then say that $\omega$ satisfies the reverse H\"{o}lder condition of order $r$ and write $\omega\in RH_r$. According to Theorem 19 and Corollary 21 in \cite{IMS2015}, $\omega\in A_\infty$ if and only if there exists some $r>1$ such that $\omega\in RH_r$. Moreover, if $\omega\in RH_r,r>1$ then $\omega\in RH_{r+\varepsilon}$ for some $\varepsilon>0$. We thus write $r_\omega=\sup \{r>1:\omega\in RH_r \}$ to denote the critical index of $\omega$ for the reverse H\"{o}lder condition.

\begin{proposition}\label{prop2.1}
If $\omega\in A_\xi(\mathbb{R}^n),1\le  \xi<\infty,$ then for any $f\in L^1_{\text{loc}}(\mathbb{R}^n)$ and any ball $B\subset \mathbb{R}^n$,
\begin{align*}
\frac{1}{|B|}\int_B|f(x)|dx\le C\left(\frac{1}{\omega(B)}\int_B|f(x)|^{\xi}\omega(x)dx \right)^{\frac{1}{\xi}}.
\end{align*}
\end{proposition}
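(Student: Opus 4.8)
The plan is to deduce the inequality from Hölder's inequality together with the defining $A_\xi$ condition, handling the endpoint $\xi=1$ by a separate, simpler argument. Throughout I may assume only the definition of $A_\xi(\mathbb{R}^n)$ given above.

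First I would treat the main range $1<\xi<\infty$. The idea is to split the integrand as $|f| = \bigl(|f|\,\omega^{1/\xi}\bigr)\cdot\omega^{-1/\xi}$ and apply Hölder's inequality on $B$ with the conjugate exponents $\xi$ and $\xi'=\xi/(\xi-1)$. Using $\xi'/\xi = 1/(\xi-1)$ and $1/\xi'=(\xi-1)/\xi$, this produces
\begin{align*}
\int_B |f(x)|\,dx \le \left(\int_B |f(x)|^{\xi}\omega(x)\,dx\right)^{\frac{1}{\xi}}\left(\int_B \omega(x)^{-\frac{1}{\xi-1}}\,dx\right)^{\frac{\xi-1}{\xi}}.
\end{align*}
The first factor is already the weighted quantity appearing on the right-hand side of the claim, so the whole problem reduces to estimating the second factor by a suitable power of $|B|$ and $\omega(B)$.

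To control that second factor I would invoke the $A_\xi$ condition, which after rearranging reads
\begin{align*}
\left(\frac{1}{|B|}\int_B \omega(x)^{-\frac{1}{\xi-1}}\,dx\right)^{\xi-1}\le \frac{C\,|B|}{\omega(B)}.
\end{align*}
Raising this to the power $1/\xi$ and collecting the powers of $|B|$ (the exponents $(\xi-1)/\xi$ from the integral and $1/\xi$ from $|B|/\omega(B)$ add up to exactly $1$) should give
\begin{align*}
\left(\int_B \omega(x)^{-\frac{1}{\xi-1}}\,dx\right)^{\frac{\xi-1}{\xi}}\le C^{\frac{1}{\xi}}\,|B|\,\omega(B)^{-\frac{1}{\xi}}.
\end{align*}
Substituting back into the Hölder estimate and dividing by $|B|$ then yields the desired inequality with constant $C^{1/\xi}$.

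For the endpoint $\xi=1$ the exponent $1/\xi$ is trivial and the argument is direct: since $\omega(x)\ge \operatorname{essinf}_{x\in B}\omega$ for almost every $x\in B$, one has $\int_B|f|\,dx\le (\operatorname{essinf}_{x\in B}\omega)^{-1}\int_B|f|\,\omega\,dx$, and the $A_1$ inequality $\omega(B)/|B|\le C\,\operatorname{essinf}_{x\in B}\omega$ supplies exactly the bound on $(\operatorname{essinf}_{x\in B}\omega)^{-1}$ needed to convert this into the claimed estimate. None of the steps is genuinely hard; the only point requiring care is the exponent bookkeeping when passing from the $A_\xi$ inequality to the bound on the second Hölder factor, and I would double-check it by verifying that the powers of $|B|$ cancel down to a single factor of $|B|$, which is what makes the final division by $|B|$ clean.
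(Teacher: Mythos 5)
Your proof is correct. Note that the paper does not actually prove Proposition \ref{prop2.1}: it is recalled in the preliminaries as a standard fact about Muckenhoupt weights, with the reader referred to the monographs of Grafakos and Stein. Your argument --- writing $|f|=\bigl(|f|\,\omega^{1/\xi}\bigr)\omega^{-1/\xi}$, applying H\"{o}lder's inequality with exponents $\xi$ and $\xi'$, absorbing the factor $\bigl(\int_B\omega^{-1/(\xi-1)}dx\bigr)^{(\xi-1)/\xi}$ via the $A_\xi$ condition, and treating the endpoint $\xi=1$ by the essential-infimum inequality --- is precisely the standard derivation found in those references; the exponent bookkeeping checks out (the powers of $|B|$ combine to exactly one factor of $|B|$) and yields the stated estimate with constant $C^{1/\xi}$.
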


\begin{proposition}\label{prop2.2}
Let $\omega\in A_\xi\cap RH_\delta$, for $\xi\ge 1$ and $\delta>1$. Then, there exist two constants $C_1, C_2>0$ such that
\begin{align*}
C_1\left(\frac{|E|}{|B|} \right)^\xi\le \frac{\omega(E)}{\omega(B)}\le C_2\left(\frac{|E|}{|B|} \right)^{\frac{\delta-1}{\delta}},
\end{align*}
for any measurable subset $E$ of a ball $B$. In particular, for any $\lambda>1$, we have
\begin{align*}
\omega(B(x_0,\lambda R))\le C\lambda^{n\xi}\omega(B(x_0,R)).
\end{align*}
\end{proposition}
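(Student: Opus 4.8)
The plan is to deduce the two-sided inequality from H\"older's inequality, using the reverse H\"older condition $\omega\in RH_\delta$ for the upper estimate and the $A_\xi$ condition for the lower estimate; the doubling inequality will then be a one-line specialization.

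For the upper bound I would write $\omega(E)=\int_B\omega\,\chi_E\,dx$ and apply H\"older's inequality with the conjugate pair $(\delta,\delta')$, $\delta'=\delta/(\delta-1)$, to get $\omega(E)\le\big(\int_B\omega^\delta\,dx\big)^{1/\delta}|E|^{(\delta-1)/\delta}$. The membership $\omega\in RH_\delta$ bounds $\big(\int_B\omega^\delta\,dx\big)^{1/\delta}=|B|^{1/\delta}\big(\frac{1}{|B|}\int_B\omega^\delta\,dx\big)^{1/\delta}$ by $C\,|B|^{1/\delta-1}\omega(B)$; since $1/\delta-1=-(\delta-1)/\delta$, collecting the powers of $|B|$ gives exactly $\omega(E)/\omega(B)\le C_2\,(|E|/|B|)^{(\delta-1)/\delta}$.

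For the lower bound, assuming first $\xi>1$, I would instead estimate $|E|$ from above: writing $|E|=\int_E\omega^{1/\xi}\omega^{-1/\xi}\,dx$ and applying H\"older with $(\xi,\xi')$, $\xi'=\xi/(\xi-1)$, yields $|E|\le\omega(E)^{1/\xi}\big(\int_E\omega^{-1/(\xi-1)}\,dx\big)^{(\xi-1)/\xi}$. Enlarging the last integral from $E$ to $B$ and feeding in the $A_\xi$ inequality, which controls $\int_B\omega^{-1/(\xi-1)}\,dx$ by $C\,|B|^{\xi/(\xi-1)}\omega(B)^{-1/(\xi-1)}$, I can solve for $\omega(E)$ and raise to the power $\xi$ to arrive at $(|E|/|B|)^\xi\le C\,\omega(E)/\omega(B)$, i.e. the claim with $C_1=1/C$. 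The endpoint $\xi=1$ is treated separately and more directly: the $A_1$ bound gives $\omega(E)\ge|E|\,\mathop{\rm essinf}_B\omega\ge (|E|/(C|B|))\,\omega(B)$, which is precisely the desired estimate for $\xi=1$.

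Finally, for the displayed doubling property I would apply the lower bound with $E=B(x_0,R)$ sitting inside the ambient ball $B(x_0,\lambda R)$; since $|B(x_0,R)|/|B(x_0,\lambda R)|=\lambda^{-n}$, the resulting inequality $C_1\lambda^{-n\xi}\le\omega(B(x_0,R))/\omega(B(x_0,\lambda R))$ rearranges at once into $\omega(B(x_0,\lambda R))\le C\lambda^{n\xi}\omega(B(x_0,R))$. I do not expect a genuine obstacle here: the argument is two applications of H\"older's inequality glued to the two defining inequalities, and the only delicate point is the bookkeeping of the conjugate exponents $(\delta,\delta/(\delta-1))$ and $(\xi,\xi/(\xi-1))$ so that the powers of $|B|$ and $\omega(B)$ cancel as required.
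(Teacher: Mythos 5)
Your proof is correct, and all the exponent bookkeeping checks out: the $RH_\delta$/H\"older pairing gives the upper bound, the $A_\xi$/H\"older pairing (with the $A_1$ endpoint handled separately via the essential infimum) gives the lower bound, and the doubling inequality follows by taking $E=B(x_0,R)\subset B(x_0,\lambda R)$. Note that the paper itself states Proposition \ref{prop2.2} without proof, recalling it as a standard fact about Muckenhoupt weights with reference to the monographs of Grafakos and Stein; your argument is exactly the standard proof found there, so there is no discrepancy to reconcile.
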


%%%%%%%%%%%%%%%%%%%%%%%%%%%%%%%%%%%%%%%%%%%%%%
\section{Main results and their proofs}
Before stating our main results, we introduce some notations that are often used in this section. Throughout this section, $\beta, \gamma,$ $ \beta_1, \gamma_1, ...,\beta_m, \gamma_m$ are real numbers greater than $-n$, and $1\leq p, q<\infty$, $1\leq p_i,q_i<\infty$ for all $i=1,...,m$ satisfying
\begin{align*}
\frac{1}{p_1}+\cdots+\frac{1}{p_m}&=\frac{1}{p},\\
\frac{1}{q_1}+\cdots+\frac{1}{q_m}&=\frac{1}{q}.\\
\end{align*}
For a matrix $A=(a_{ij})_{n\times n}$, we define the norm of $A$ as follows
\begin{align*}
\|A\|=\left(\sum\limits_{i,j=1}^n |a_{ij}|^2 \right)^{\frac{1}{2}}.
\end{align*}
It is easy to see that $|Ax|\le \|A\||x|$ for any vector $x\in\mathbb{R}^n$. In particular, if $A$ is invertible,  we then have
\begin{align*}
\|A\|^{-n}\le |\det (A^{-1})|\le \|A^{-1}\|^n.
\end{align*}
In the first part of this section, we will discuss the boundedness of multilinear Hausdorff operators on the two weighted Morrey, Herz, and Morrey-Herz spaces with power weights provided that
\begin{align}\label{eq3.1}
\rho_{\vec{A}}:=\mathop {\text{ess}\sup }\limits_{t\in\mathbb R^n, i=1,...,m}\|A_i(t)\|.\|A^{-1}_i(t)\|<\infty.
\end{align}

As some applications, we also obtain the boundedness and bounds of the multilinear Hardy-Ces\`{a}ro operators on the such spaces. 

Observe that if $A(t)$  is a real orthogonal matrix for almost everywhere $t$ in $\mathbb R^n$, then $A(t)$ satisfies the condtion \eqref{eq3.1}.
It is useful to remark that the condition \eqref{eq3.1} implies $\|A_i(t)\|\simeq\|A^{-1}_i(t)\|^{-1}$. Moreover, it is easily seen that  
\begin{align}\label{eq3.2}
\|A_i(t)\|^\eta\lesssim \|A^{-1}_i(t)\|^{-\eta},~\text{for all} ~ \eta\in\mathbb{R}, 
\end{align}
and
\begin{align}\label{eq3.3}
|A_i(t)x|^\eta\gtrsim \|A^{-1}_i(t)\|^{-\eta}.|x|^\eta, ~\text{for all} ~ \eta\in\mathbb{R},\; x\in\mathbb{R}^n.
\end{align}
%%%%%%%%%%%%%%%%%%%%%%%%%%%%%%%%%%%%%%%%%%%%%%%%%%%%%%%%%%%%
%%%%%%%%%%%%%%%%%%%%%%%%%%%%%%%%%%%%%%%%%%%%%%%%%%%%%%%%%%%%
\vskip 5pt
Now, we are in a position to give our first main results concerning the
boundedness of multilinear Hausdorff operators on two weighted central Morrey spaces.
\begin{theorem}\label{theo3.1}
Let $v(x)=|x|^\beta, \omega(x)=|x|^\gamma$, $v_i(x)=|x|^{\beta_i}, \omega_i(x)=|x|^{\gamma_i}, \textit{ for all } i=1,...,m$, and the following conditions hold $$\sum\limits_{i=1}^m \frac{\beta_i}{q_i}=\frac{\beta}{q},\; \sum\limits_{i=1}^m\left(\frac{n+\beta_i}{n+\beta}\right)\lambda_i=\lambda, \textit{ and } \sum\limits_{i=1}^m \frac{\gamma_i}{q_i}=\frac{\gamma}{q}.$$ 
{\rm(i)}  If 
\begin{align*}
\mathcal{C}_{1}=\int_{\mathbb{R}^n}\frac{|\Phi(y)|}{|y|^n}\prod_{i=1}^m\|A^{-1}_i(y)\|^{-(\beta_i+n)\lambda_i+(\gamma_i-\beta_i)\frac{1}{q_i}} dy<\infty,
\end{align*}
then ${\mathcal{H}}_{\Phi,\vec{A}}$ is bounded from $\prod_{i=1}^m\dot{M}^{q_i,\lambda_i}_{v_i, \omega_i}(\mathbb{R}^n)$ to $\dot{M}^{q,\lambda}_{v,\omega}(\mathbb{R}^n)$. Moreover,
\begin{align*}
\|{\mathcal{H}}_{\Phi,\vec{A}} \|_{\prod_{i=1}^m\dot{M}^{q_i,\lambda_i}_{v_i, \omega_i}(\mathbb{R}^n)\to \dot{M}^{q,\lambda}_{v,\omega}(\mathbb{R}^n)}\lesssim \mathcal{C}_{1}.
\end{align*}
{\rm(ii)} Conversely, suppose $\Phi$ is a real function with a constant sign in $\mathbb{R}^n$. Then, if ${\mathcal{H}}_{\Phi,\vec{A}}$ is bounded from $\prod_{i=1}^m\dot{M}^{q_i,\lambda_i}_{v_i, \omega_i}(\mathbb{R}^n)$ to $\dot{M}^{q,\lambda}_{v,\omega}(\mathbb{R}^n)$, we have $\mathcal{C}_1<\infty$. Furthermore,
\begin{align*}
\|{\mathcal{H}}_{\Phi,\vec{A}} \|_{\prod_{i=1}^m\dot{M}^{q_i,\lambda_i}_{v_i, \omega_i}(\mathbb{R}^n)\to \dot{M}^{q,\lambda}_{v,\omega}(\mathbb{R}^n)}\gtrsim \mathcal{C}_{1}.
\end{align*}
\end{theorem}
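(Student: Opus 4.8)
The plan is to prove the two implications separately, both resting on the elementary volume computation for power weights: since $\beta>-n$, polar coordinates give $v(B(0,R))=|S_{n-1}|(n+\beta)^{-1}R^{n+\beta}\simeq R^{n+\beta}$, and likewise $v_i(B(0,R))\simeq R^{n+\beta_i}$. For the sufficiency (i), I would fix $R>0$ and estimate $\|\mathcal{H}_{\Phi,\vec{A}}(\vec{f})\|_{L^q_\omega(B(0,R))}$. First apply Minkowski's integral inequality to move the $L^q_\omega(B(0,R))$-norm inside the $dy$-integral, and then apply the weighted H\"older inequality to the product $\prod_i f_i(A_i(y)\,\cdot\,)$: the relations $\sum 1/q_i=1/q$ and $\sum\gamma_i/q_i=\gamma/q$ are precisely what allow $\prod_i|g_i|^q|x|^\gamma$ to factor as $\prod_i\big(\int|g_i|^{q_i}|x|^{\gamma_i}\big)^{q/q_i}$. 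This reduces the problem to bounding each single factor $\|f_i(A_i(y)\,\cdot\,)\|_{L^{q_i}_{\omega_i}(B(0,R))}$.

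For each factor I would perform the change of variables $z=A_i(y)x$. The key point is that, under \eqref{eq3.1}, one has $\|A_i(y)\|\simeq\|A_i^{-1}(y)\|^{-1}$ and $|\det A_i^{-1}(y)|\simeq\|A_i^{-1}(y)\|^n$, while applying \eqref{eq3.3} to both $A_i$ and $A_i^{-1}$ gives $|x|=|A_i^{-1}(y)z|\simeq\|A_i^{-1}(y)\|\,|z|$, so that $|x|^{\gamma_i}\lesssim\|A_i^{-1}(y)\|^{\gamma_i}|z|^{\gamma_i}$ regardless of the sign of $\gamma_i$. Bounding the image region by $A_i(y)(B(0,R))\subseteq B(0,\|A_i(y)\|R)$ and inserting the Morrey norm of $f_i$ yields
\begin{align*}
\|f_i(A_i(y)\,\cdot\,)\|_{L^{q_i}_{\omega_i}(B(0,R))}\lesssim \|A_i^{-1}(y)\|^{-(n+\beta_i)\lambda_i+(\gamma_i-\beta_i)/q_i}\,R^{(n+\beta_i)(\lambda_i+1/q_i)}\,\|f_i\|_{\dot{M}^{q_i,\lambda_i}_{v_i,\omega_i}},
\end{align*}
where collecting the contributions $\|A_i^{-1}(y)\|^{\gamma_i/q_i}$ (weight), $\|A_i^{-1}(y)\|^{n/q_i}$ (Jacobian) and $\|A_i^{-1}(y)\|^{-(n+\beta_i)(\lambda_i+1/q_i)}$ (region and Morrey norm) reproduces exactly the integrand of $\mathcal{C}_1$. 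Multiplying over $i$, integrating against $|\Phi(y)||y|^{-n}$, and dividing by $v(B(0,R))^{\lambda+1/q}\simeq R^{(n+\beta)(\lambda+1/q)}$, the three balance conditions force $\sum_i(n+\beta_i)\lambda_i=(n+\beta)\lambda$ and $\sum_i(n+\beta_i)/q_i=(n+\beta)/q$, so the powers of $R$ cancel exactly; taking the supremum over $R$ gives the bound with constant $\lesssim\mathcal{C}_1$.

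For the necessity (ii), assuming $\Phi\ge 0$ (the case $\Phi\le 0$ being identical), I would test on the power functions $f_i(x)=|x|^{\sigma_i}$ with $\sigma_i=(n+\beta_i)\lambda_i+(\beta_i-\gamma_i)/q_i$. A direct computation shows these are admissible, since $\sigma_iq_i+\gamma_i+n=(n+\beta_i)(1+\lambda_iq_i)>0$, and that each $\|f_i\|_{\dot{M}^{q_i,\lambda_i}_{v_i,\omega_i}}$ is a finite positive constant. Because $f_i(A_i(y)x)=|A_i(y)x|^{\sigma_i}\gtrsim\|A_i^{-1}(y)\|^{-\sigma_i}|x|^{\sigma_i}$ by \eqref{eq3.3}, and $-\sigma_i=-(n+\beta_i)\lambda_i+(\gamma_i-\beta_i)/q_i$ is exactly the exponent in $\mathcal{C}_1$, the positivity of $\Phi$ yields the pointwise lower bound $\mathcal{H}_{\Phi,\vec{A}}(\vec{f})(x)\gtrsim\mathcal{C}_1\,|x|^{\sigma}$, where $\sigma=\sum_i\sigma_i=(n+\beta)\lambda+(\beta-\gamma)/q$. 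The same volume computation shows $\||x|^{\sigma}\|_{\dot{M}^{q,\lambda}_{v,\omega}}$ is a finite positive constant, so the assumed boundedness gives $\mathcal{C}_1\lesssim\|\mathcal{H}_{\Phi,\vec{A}}\|$, establishing both $\mathcal{C}_1<\infty$ and the reverse estimate.

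I expect the main obstacle to be the bookkeeping in the change of variables: one must apply \eqref{eq3.2}--\eqref{eq3.3} to both $A_i$ and $A_i^{-1}$ while tracking the signs of $\gamma_i,\beta_i,\lambda_i$ so that every inequality points the correct way, and then verify that the accumulated exponents of $\|A_i^{-1}(y)\|$ reproduce the integrand of $\mathcal{C}_1$ and that the exponents of $R$ cancel against $v(B(0,R))^{\lambda+1/q}$. This last verification is exactly where all three balance hypotheses $\sum\beta_i/q_i=\beta/q$, $\sum\big((n+\beta_i)/(n+\beta)\big)\lambda_i=\lambda$, and $\sum\gamma_i/q_i=\gamma/q$ are consumed.
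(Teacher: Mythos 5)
Your proposal is correct and follows essentially the same route as the paper's own proof: Minkowski's inequality, then H\"older via $\sum_i 1/q_i=1/q$ and $\sum_i\gamma_i/q_i=\gamma/q$, the change of variables $z=A_i(y)x$ with the bounds $|\det A_i^{-1}(y)|\lesssim\|A_i^{-1}(y)\|^n$ and $|A_i^{-1}(y)z|^{\gamma_i}\lesssim\|A_i^{-1}(y)\|^{\gamma_i}|z|^{\gamma_i}$, the power-weight volume computation to cancel the $R$-powers for part (i), and testing on exactly the same power functions $f_i(x)=|x|^{(n+\beta_i)\lambda_i+(\beta_i-\gamma_i)/q_i}$ together with \eqref{eq3.3} for part (ii). The only differences are cosmetic (you track explicit powers of $R$ where the paper keeps $v_i(B(0,\|A_i(y)\|R))$ symbolic, and you use $\simeq$ where the paper uses the max formulation), so no further comment is needed.
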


\begin{proof}
(i) For $\sum\limits_{i=1}^m \frac{\gamma_i}{q_i}=\frac{\gamma}{q}$, it is easy to see that $\prod_{i=1}^m \omega_i^{\frac{1}{q_i}} (x)=\omega^{\frac{1}{q}}(x)$. Then,
 by the Minkowski inequality, we have
\begin{align*}
\|{\mathcal{H}}_{\Phi,\vec{A}}(\vec{f})\|_{L^q_\omega (B(0,R))}
&=\left(\int_{B(0,R)}\left|\int_{\mathbb{R}^n}\frac{\Phi(y)}{|y|^n}\prod_{i=1}^mf_i(A_i(y).x)dy\right|^q\omega(x)dx \right)^{\frac{1}{q}}\\
&=\left(\int_{B(0,R)}\left|\int_{\mathbb{R}^n}\frac{\Phi(y)}{|y|^n}\prod_{i=1}^mf_i(A_i(y).x)\omega_i^{\frac{1}{q_i}}(x)dy\right|^qdx \right)^{\frac{1}{q}}\\
&\le \int_{\mathbb{R}^n}\frac{|\Phi(y)|}{|y|^n}\left(\Big\Vert\prod_{i=1}^mf_i(A_i(y).x)\omega_i^{\frac{1}{q_i}}(x) \Big\Vert_{L^q (B(0,R))} \right)dy.
\end{align*}
From assuming that $\sum\limits_{i=1}^m \frac{1}{q_i}=\frac{1}{q}$ and applying the H\"{o}lder inequality, it immediately follows that
\begin{align*}
\Big\Vert\prod_{i=1}^mf_i(A_i(y).x)\omega_i^{\frac{1}{q_i}}(x) \Big\Vert_{L^q (B(0,R))}\le \prod_{i=1}^m \Vert f_i(A_i(y).x) \Vert_{L^{q_i}_{\omega_i}( B(0,R))}.
\end{align*}
So, we obtain 
\begin{align*}
\|{\mathcal{H}}_{\Phi,\vec{A}}(\vec{f})\|_{L^q_\omega (B(0,R))} &\mathop  \le\int_{\mathbb{R}^n}\frac{|\Phi(y)|}{|y|^n}\left(\prod_{i=1}^m\Vert f_i(A_i(y).x) \Vert_{L^{q_i}_{\omega_i}( B(0,R))} \right)dy.
\end{align*}
Using change of variable  $z=A_i(y).x$ with $z\in A_i(y).B(0,R)$, we have
\begin{align*}
&\|{\mathcal{H}}_{\Phi,\vec{A}}(\vec{f})\|_{L^q_\omega (B(0,R))}\\
&\mathop  \le  \int_{\mathbb{R}^n}\frac{|\Phi(y)|}{|y|^n}\left(\prod_{i=1}^m\int_{A_i(y).B(0,R)} |f_i(z)|^{q_i}|A^{-1}(y).z|^{\gamma_i} |\det A^{-1}_i(y)|dz \right)^{\frac{1}{q_i}}dy\\
&\le \int_{\mathbb{R}^n}\frac{|\Phi(y)|}{|y|^n}\left(\prod_{i=1}^m\int_{B(0,\|A_i(y)\|.R)} |f_i(z)|^{q_i}|A^{-1}(y).z|^{\gamma_i} |\det A^{-1}_i(y)|dz \right)^{\frac{1}{q_i}}dy.
\end{align*}
By \eqref{eq3.2}, for all $\gamma_i\in\mathbb{R}$, we have the following useful inequality
\begin{align*}
|A_i^{-1}(y).z|^{\gamma_i} \le \max\{\|A_i^{-1}(y)\|^{\gamma_i},\|A_i(y)\|^{-\gamma_i} \}|z|^{\gamma_i}.
\end{align*}
Consequently, we also obtain
\begin{align}\label{basic-ineq}
\|{\mathcal{H}}_{\Phi,\vec{A}}(\vec{f})\|_{L^q_\omega (B(0,R))}
&\le  \int_{\mathbb{R}^n}\frac{|\Phi(y)|}{|y|^n}\left(\prod_{i=1}^m\max\{\|A_i^{-1}(y)\|^{\gamma_i},\|A_i(y)\|^{-\gamma_i} \}|\det A^{-1}_i(y)|\times\right.\notag\\
&\left.\times\int\limits_{B(0,\|A_i(y)\|.R)} |f_i(z)|^{q_i}.\omega_i(z) dz \right)^{\frac{1}{q_i}}dy\notag\\
&\le  \int_{\mathbb{R}^n}\frac{|\Phi(y)|}{|y|^n}\prod_{i=1}^m\max\{\|A_i^{-1}(y)\|^{\gamma_i},\|A_i(y)\|^{-\gamma_i} \}^{\frac{1}{q_i}}|\det A^{-1}_i(y)|^{\frac{1}{q_i}}  \times\notag\\
&\times\|f_i\|_{L^{q_i}_{\omega_i}(B(0,\|A_i(y)\|.R))} dy.
\end{align}
By the definition of two weighted central  Morrey space, we get
\begin{align}\label{eq3.4}
&\|{\mathcal{H}}_{\Phi,\vec{A}}(\vec{f})\|_{\dot{M}^{q,\lambda}_{v,\omega}(\mathbb{R}^n)}\notag\\
&\le \sup\limits_{R>0}\frac{1}{v(B(0,R))^{\lambda +\frac{1}{q}}}  \int_{\mathbb{R}^n}\frac{|\Phi(y)|}{|y|^n}\prod_{i=1}^m\max\{\|A_i^{-1}(y)\|^{\gamma_i},\|A_i(y)\|^{-\gamma_i} \}^{\frac{1}{q_i}}|\det A^{-1}_i(y)|^{\frac{1}{q_i}}   \times\notag\\
&\;\;\;\;\;\times\|f_i\|_{L^{q_i}_{\omega_i}(B(0,\|A_i(y)\|.R))} dy\notag\\
&\le \sup\limits_{R>0}  \int_{\mathbb{R}^n}\frac{|\Phi(y)|}{|y|^n}\prod_{i=1}^m\max\{\|A_i^{-1}(y)\|^{\gamma_i},\|A_i(y)\|^{-\gamma_i} \}^{\frac{1}{q_i}}|\det A^{-1}_i(y)|^{\frac{1}{q_i}}\times\notag\\
&\;\;\;\;\;\times\frac{1}{v(B(0,R))^{\lambda +\frac{1}{q}}}   \|f_i\|_{L^{q_i}_{\omega_i}(B(0,\|A_i(y)\|.R))} dy.
\end{align} 
Remark that by the conditions $\lambda+\frac{1}{q}>0, \lambda_i+\frac{1}{q_i}>0$, $i=1,...,m$, and $\sum\limits_{i=1}^m (\beta_i+n)(\lambda_i +\frac{1}{q_i})=(\beta+n)\left(\lambda+\frac{1}{q}\right)$,
it is easy to see that
$$v(B(0,R))^{\lambda +\frac{1}{q}} \lesssim R^{(\beta+n)(\lambda +\frac{1}{q})},$$
and
\begin{align*}
 \prod_{i=1}^m\frac{ v_i(B(0,\|A_i(y)\|.R))^{(\lambda_i +\frac{1}{q_i})}}{v(B(0,R))^{\lambda +\frac{1}{q}}}&\lesssim \prod_{i=1}^m \frac{(\|A_i(y)\|.R)^{(\beta_i+n)(\lambda_i +\frac{1}{q_i})}}{R^{(\beta+n)(\lambda +\frac{1}{q})}}\\
&=\frac{R^{\sum\limits_{i=1}^m (\beta_i+n)(\lambda_i +\frac{1}{q_i})} }{R^{(\beta+n)(\lambda +\frac{1}{q})}}\prod_{i=1}^m \|A_i(y)\|^{(\beta_i+n)(\lambda_i +\frac{1}{q_i})}\\
&=\prod_{i=1}^m \|A_i(y)\|^{(\beta_i+n)(\lambda_i +\frac{1}{q_i})}.
\end{align*}
This shows that
\begin{align*}
\frac{1}{v(B(0,R))^{\lambda +\frac{1}{q}}}\lesssim \prod_{i=1}^m \frac{\|A_i(y)\|^{(\beta_i+n)(\lambda_i +\frac{1}{q_i})}}{v_i(B(0,\|A_i(y)\|.R))^{(\lambda_i +\frac{1}{q_i})}}.
\end{align*}
By the above estimations, we have
\begin{align*}
&\|{\mathcal{H}}_{\Phi,\vec{A}}(\vec{f})\|_{\dot{M}^{q,\lambda}_{v,\omega}(\mathbb{R}^n)}\\
&\le \sup\limits_{R>0}  \int_{\mathbb{R}^n}\frac{|\Phi(y)|}{|y|^n}\prod_{i=1}^m\max\{\|A_i^{-1}(y)\|^{\gamma_i},\|A_i(y)\|^{-\gamma_i} \}^{\frac{1}{q_i}}|\det A^{-1}_i(y)|^{\frac{1}{q_i}}\times\\
&\;\;\;\;\;\times\prod_{i=1}^m\frac{ \|A_i(y)\|^{(\beta_i+n)(\lambda_i +\frac{1}{q_i})}}{v_i(B(0,\|A_i(y)\|.R))^{(\lambda_i +\frac{1}{q_i})}}  \|f_i\|_{L^{q_i}_{\omega_i}(B(0,\|A_i(y)\|.R))} dy\\
&\le   \int_{\mathbb{R}^n}\frac{|\Phi(y)|}{|y|^n}\prod_{i=1}^m\max\{\|A_i^{-1}(y)\|^{\gamma_i},\|A_i(y)\|^{-\gamma_i} \}^{\frac{1}{q_i}}|\det A^{-1}_i(y)|^{\frac{1}{q_i}} \|A_i(y)\|^{(\beta_i+n)(\lambda_i +\frac{1}{q_i})} dy\times\\
&\;\;\;\;\;\times\prod_{i=1}^m \|f_i\|_{\dot{M}^{q_i,\lambda_i}_{v_i, \omega_i}(\mathbb{R}^n)}.
\end{align*}
By the inequality \eqref{eq3.2} and the property of invertible matrices, we get
$$|\det A_i^{-1}(y)|^{\frac{1}{q_i}} \le \|A^{-1}_i(y)\|^{\frac{n}{q_i}},$$ 
$$\|A_i(y)\|^{(\beta_i+n)(\lambda_i +\frac{1}{q_i})}\lesssim \|A^{-1}_i(y)\|^{-(\beta_i+n)(\lambda_i +\frac{1}{q_i})},$$
 and 
$$\prod_{i=1}^m\max\{\|A_i^{-1}(y)\|^{\gamma_i},\|A_i(y)\|^{-\gamma_i} \}^{\frac{1}{q_i}}\lesssim \prod_{i=1}^m\|A^{-1}_i(y)\|^{\frac{\gamma_i}{q_i}}.$$
This implies that
\begin{align}\label{eq3.5}
&\prod_{i=1}^m\max\{\|A_i^{-1}(y)\|^{\gamma_i},\|A_i(y)\|^{-\gamma_i} \}^{\frac{1}{q_i}}|\det A^{-1}_i(y)|^{\frac{1}{q_i}}\|A_i(y)\|^{(\beta_i+n)(\lambda_i +\frac{1}{q_i})}\notag\\
&\lesssim\prod_{i=1}^m\|A^{-1}_i(y)\|^{-(\beta_i+n)\lambda_i+(\gamma_i-\beta_i)\frac{1}{q_i}}.
\end{align}
Consequently, 
$$\|{\mathcal{H}}_{\Phi,\vec{A}}(\vec{f})\|_{\dot{M}^{q,\lambda}_{v,\omega}(\mathbb{R}^n)}\lesssim
\mathcal{C}_1.\prod_{i=1}^m \|f_i\|_{\dot{M}^{q_i,\lambda_i}_{v_i, \omega_i}(\mathbb{R}^n)}. $$
Therefore, the operator ${\mathcal{H}}_{\Phi,\vec{A}}$ is bounded from the product space $\prod_{i=1}^m\dot{M}^{q_i,\lambda_i}_{v_i, \omega_i}(\mathbb{R}^n)$ to $\dot{M}^{q,\lambda}_{v,\omega}(\mathbb{R}^n)$. The proof for the part (i) of the theorem is finished.
\vskip 5pt
(ii) Conversely, suppose that ${\mathcal{H}}_{\Phi,\vec{A}}$ is bounded from the product $\prod_{i=1}^m\dot{M}^{q_i,\lambda_i}_{v_i, \omega_i}(\mathbb{R}^n)$ to $\dot{M}^{q,\lambda}_{v,\omega}(\mathbb{R}^n)$. Then, let us choose
\begin{align*}
f_i(x)=|x|^{(\beta_i+n)\lambda_i+(\beta_i-\gamma_i)\frac{1}{q_i}}.
\end{align*}
It is evident that $\|f_i\|_{\dot{M}^{q,\lambda}_{v,\omega}(\mathbb{R}^n)}>0$, for all $i=1,...,m$. Now, we need to show that
\begin{align*}
\|f_i\|_{\dot{M}^{q,\lambda}_{v,\omega}(\mathbb{R}^n)}<\infty, ~\text{for all} ~ i=1,...,m.
\end{align*}
Indeed, we have
\begin{align*}
\|f_i\|_{\dot{B}^{q_i,\lambda_i}_{v_i, \omega_i}(\mathbb{R}^n)}
&=\sup\limits_{R>0}\frac{1}{v_i(B(0,R))^{\lambda_i +\frac{1}{q_i}}}\left(\int_{B(0,R)}|x|^{(\beta_i+n)\lambda_iq_i+(\beta_i-\gamma_i)}.|x|^{\gamma_i}dx \right)^{\frac{1}{q_i}}\\
&=\sup\limits_{R>0}\frac{1}{v_i(B(0,R))^{\lambda_i +\frac{1}{q_i}}}\left(\int_{B(0,R)}|x|^{(\beta_i+n)\lambda_iq_i+\beta_i}dx \right)^{\frac{1}{q_i}}\\
&=\sup\limits_{R>0} \frac{1}{R^{(\beta_i+n)\left(\lambda_i+\frac{1}{q_i}\right)}} \left(\int_0^R\int_{S_{n-1}}|rx'|_h^{(\beta_i+n)\lambda_iq_i+\beta_i}.r^{n-1} d\sigma(x')dr  \right)^{\frac{1}{q_i}}\\
&\simeq \sup\limits_{R>0} \frac{1}{R^{(\beta_i+n)\left(\lambda_i+\frac{1}{q_i}\right)}}\frac{R^{(\beta_i+n)\left(\lambda_i+\frac{1}{q_i}\right)}}{(\beta_i+n)(\lambda_i+\frac{1}{q_i})}\simeq\frac{1} {(\beta_i+n)(\lambda_i+\frac{1}{q_i})}<\infty.
\end{align*}
Thus, by choosing $f_i$ and the condition \eqref{eq3.3}, we conclude that
\begin{align*}
&\|{\mathcal{H}}_{\Phi,\vec{A}}(\vec{f})\|_{\dot{M}^{q,\lambda}_{v,\omega}(\mathbb{R}^n)}\\
&=\sup\limits_{R>0}\frac{1}{v(B(0,R))^{\lambda +\frac{1}{q}}}\left(\int_{B(0,R)}\left|\int_{\mathbb{R}^n}\frac{\Phi(y)}{|y|^n}\prod_{i=1}^m|A_i(y)x|^{(\beta_i+n)\lambda_i+(\beta_i-\gamma_i)\frac{1}{q_i}}dy\right|^q\omega(x)dx \right)^{\frac{1}{q}}\\
&\gtrsim \int_{\mathbb{R}^n}\frac{|\Phi(y)|}{|y|^n}\prod_{i=1}^m \|A_i^{-1}(y)\|^{-\left((\beta_i+n)\lambda_i+(\beta_i-\gamma_i)\frac{1}{q_i} \right)}dy\times\\
&\;\;\;\;\;\times \sup\limits_{R>0}\frac{1}{v(B(0,R))^{\lambda +\frac{1}{q}}}\left(\int_{B(0,R)}\left|\prod_{i=1}^m |x|^{(\beta_i+n)\lambda_i+(\beta_i-\gamma_i)\frac{1}{q_i}}\right|^q |x|^\gamma dx \right)^{\frac{1}{q}}\\
&\gtrsim \int_{\mathbb{R}^n}\frac{|\Phi(y)|}{|y|^n}\prod_{i=1}^m \|A_i^{-1}(y)\|^{-\left((\beta_i+n)\lambda_i+(\beta_i-\gamma_i)\frac{1}{q_i} \right)}dy\times\\
&\;\;\;\;\;\times \sup\limits_{R>0}\frac{1}{v(B(0,R))^{\lambda +\frac{1}{q}}}\left(\int_{B(0,R)} |x|^{\sum\limits_{i=1}^m\left((\beta_i+n)\lambda_i+(\beta_i-\gamma_i)\frac{1}{q_i}\right)q+\gamma} dx \right)^{\frac{1}{q}}\\
&\gtrsim \left(\int_{\mathbb{R}^n}\frac{|\Phi(y)|}{|y|^n}\prod_{i=1}^m \|A_i^{-1}(y)\|^{-(\beta_i+n)\lambda_i+(\gamma_i-\beta_i)\frac{1}{q_i} }dy\right)
\times\prod_{i=1}^m \|f_i\|_{\dot{M}^{q_i,\lambda_i}_{v_i, \omega_i}(\mathbb{R}^n)}.
\end{align*}
Therefore, Theorem \ref{theo3.1} is completely proved.
\end{proof}
%%%%%%%%%%%%%%%%%%%%%%%%%%%%%%%%%%%%%%%%%%%%%%%%%%%%%%%%%%%%%%%%%%%%%%%%
%%%%%%%%%%%%%%%%%%%%%%%%%%%%%%%%%%%%%%%%%%%%%%%%%%%%%%%%%%%%%%%%%%%%%%%%
Now, we would like to give an application of Theorem \ref{theo3.1}. Let us take the matrices $A_i(y)={\rm diag }[ s_{i}(y),...,s_{i}(y) ]$, for all $i=1,...,m$, where the measurable functions $s_1(y), ..., s_m(y)\not=0$ almost everywhere in $\mathbb R^n$. It is obvious that the matrices $A_i$'s satisfy the condition (\ref{eq3.1}). 
By Theorem \ref{theo3.1},  we obtain the following useful result concerning the boundedness of the multilinear operator $\mathcal{H}_{\phi,\vec{s}}$.
\begin{corollary} \label{H-phi-Morrey}
Let $\phi$ be a nonnegative function. Under the same assumptions as Theorem \ref{theo3.1}, we have that the operator $\mathcal{H}_{\phi,\vec{s}}$ is bounded from $\prod_{i=1}^m\dot{M}^{q_i,\lambda_i}_{v_i, \omega_i}(\mathbb{R}^n)$ to $\dot{M}^{q,\lambda}_{v,\omega}(\mathbb{R}^n)$ if and only if
\begin{align*}
\mathcal{C}_{1.1}=\int_{\mathbb{R}^n}\left(\prod_{i=1}^m|s_i(y)|^{(\beta_i+n)\lambda_i+(\beta_i-\gamma_i)\frac{1}{q_i}}\right)\phi(y) dy<\infty.
\end{align*}
Moreover,
\begin{align*}
\|\mathcal{H}_{\phi,\vec{s}} \|_{\prod_{i=1}^m\dot{M}^{q_i,\lambda_i}_{v_i, \omega_i}(\mathbb{R}^n)\to \dot{M}^{q,\lambda}_{v,\omega}(\mathbb{R}^n)}\simeq \mathcal{C}_{1.1}.
\end{align*} 
\end{corollary}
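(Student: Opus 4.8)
The plan is to realize $\mathcal{H}_{\phi,\vec{s}}$ as a special instance of the general operator $\mathcal{H}_{\Phi,\vec{A}}$ and then read the conclusion directly off Theorem \ref{theo3.1}. Concretely, I would take the scalar matrices $A_i(y)=s_i(y)I_n$ and the kernel $\Phi(y)=|y|^n\phi(y)$. With this choice $\frac{\Phi(y)}{|y|^n}=\phi(y)$ and $A_i(y)x=s_i(y)x$, so $\mathcal{H}_{\Phi,\vec{A}}(\vec{f})=\mathcal{H}_{\phi,\vec{s}}(\vec{f})$ exactly. I would first check that the hypotheses transfer: since each $A_i(y)$ is a scalar multiple of the identity, $\|A_i(y)\|\,\|A_i^{-1}(y)\|=n$ is constant, so $\rho_{\vec{A}}=n<\infty$ and condition \eqref{eq3.1} holds (as already remarked in the paper). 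Moreover, because $\phi$ is nonnegative, $\Phi(y)=|y|^n\phi(y)$ is a real function of constant sign on $\mathbb{R}^n$, which is precisely the extra hypothesis needed to invoke part (ii) of Theorem \ref{theo3.1}.

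The computational core is to translate the constant $\mathcal{C}_1$ into $\mathcal{C}_{1.1}$. First I would record the matrix norms: for $A_i(y)=s_i(y)I_n$ one has $\|A_i(y)\|=\sqrt{n}\,|s_i(y)|$ and $\|A_i^{-1}(y)\|=\sqrt{n}/|s_i(y)|$. Writing the exponent in $\mathcal{C}_1$ as $E_i:=-(\beta_i+n)\lambda_i+(\gamma_i-\beta_i)\tfrac{1}{q_i}$, I would substitute to obtain
\begin{align*}
\|A_i^{-1}(y)\|^{E_i}=\left(\frac{\sqrt{n}}{|s_i(y)|}\right)^{E_i}=(\sqrt{n})^{E_i}\,|s_i(y)|^{(\beta_i+n)\lambda_i+(\beta_i-\gamma_i)\frac{1}{q_i}},
\end{align*}
since $-E_i=(\beta_i+n)\lambda_i+(\beta_i-\gamma_i)\tfrac{1}{q_i}$. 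The factors $(\sqrt{n})^{E_i}$ are fixed dimensional constants independent of $y$, so they pull out of the integral. Plugging these into the formula for $\mathcal{C}_1$ and using $\frac{|\Phi(y)|}{|y|^n}=\phi(y)$ gives
\begin{align*}
\mathcal{C}_1=(\sqrt{n})^{\sum_{i=1}^m E_i}\int_{\mathbb{R}^n}\left(\prod_{i=1}^m|s_i(y)|^{(\beta_i+n)\lambda_i+(\beta_i-\gamma_i)\frac{1}{q_i}}\right)\phi(y)\,dy=(\sqrt{n})^{\sum_{i=1}^m E_i}\,\mathcal{C}_{1.1},
\end{align*}
so that $\mathcal{C}_1\simeq\mathcal{C}_{1.1}$. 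In particular $\mathcal{C}_1<\infty$ if and only if $\mathcal{C}_{1.1}<\infty$.

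With this equivalence in hand, the conclusion is immediate. Part (i) of Theorem \ref{theo3.1} yields $\|\mathcal{H}_{\phi,\vec{s}}\|\lesssim\mathcal{C}_1\simeq\mathcal{C}_{1.1}$ whenever $\mathcal{C}_{1.1}<\infty$, giving boundedness from $\prod_{i=1}^m\dot{M}^{q_i,\lambda_i}_{v_i,\omega_i}(\mathbb{R}^n)$ to $\dot{M}^{q,\lambda}_{v,\omega}(\mathbb{R}^n)$; and part (ii), applicable because $\Phi$ has constant sign, gives the reverse bound $\|\mathcal{H}_{\phi,\vec{s}}\|\gtrsim\mathcal{C}_1\simeq\mathcal{C}_{1.1}$, forcing $\mathcal{C}_{1.1}<\infty$ and yielding the equivalence $\|\mathcal{H}_{\phi,\vec{s}}\|\simeq\mathcal{C}_{1.1}$. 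There is no genuine analytic obstacle here since everything reduces to Theorem \ref{theo3.1}; the only points requiring care are the bookkeeping of the exponent identity $-E_i=(\beta_i+n)\lambda_i+(\beta_i-\gamma_i)\tfrac1{q_i}$, confirming that the purely dimensional $\sqrt{n}$ factors are harmless and absorbed into the symbol $\simeq$, and verifying that the nonnegativity of $\phi$ supplies the constant-sign hypothesis needed to run the converse direction.
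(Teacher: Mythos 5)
Your proposal is correct and takes essentially the same route as the paper: the paper also deduces the corollary by specializing Theorem \ref{theo3.1} to the scalar matrices $A_i(y)=s_i(y)I_n$ (which satisfy condition \eqref{eq3.1}) and the kernel $\Phi(y)=|y|^n\phi(y)$, so that $\mathcal{C}_1$ reduces to $\mathcal{C}_{1.1}$ up to dimensional constants. Your write-up merely makes explicit the exponent bookkeeping, the Frobenius-norm computation $\|A_i(y)\|\,\|A_i^{-1}(y)\|=n$, and the constant-sign check that the paper leaves implicit.
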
 
 
In particular, by virtue of Corollary \ref{H-phi-Morrey} one can claim that the weighted multilinear Hardy-Ces\`{a}ro operator $U_{\psi,\vec{s}}^{m,n}$ is bounded from $\prod_{i=1}^m\dot{M}^{q_i,\lambda_i}_{v_i, \omega_i}(\mathbb{R}^n)$ to $\dot{M}^{q,\lambda}_{v,\omega}(\mathbb{R}^n)$ if and only if
\begin{align*}
\mathcal{C}_{1.2}=\int_{[0, 1]^n}\left(\prod_{i=1}^m|s_i(t)|^{(\beta_i+n)\lambda_i+(\beta_i-\gamma_i)\frac{1}{q_i}}\right)\psi(t) dt<\infty.
\end{align*}
Moreover, 
\begin{align*}
\|U_{\psi,\vec{s}}^{m,n}\|_{\prod_{i=1}^m\dot{M}^{q_i,\lambda_i}_{v_i, \omega_i}(\mathbb{R}^n)\to \dot{M}^{q,\lambda}_{v,\omega}(\mathbb{R}^n)}\simeq \mathcal{C}_{1.2}.
\end{align*} 

%%%%%%%%%%%%%%%%%%%%%%%%%%%%%%%%%%%%%%%%%%%%%%%%%%%%%%%%%%%%%%%%%%%%%%%%%%%%%%%%%%%%%
%%%%%%%%%%%%%%%%%%%%%%%%%%%%%%%%%%%%%%%%%%%%%%%%%%%%%%%%%%%%%%%%%%%%%%%%%%%%%%%%%%%%%
Next, we also give the boundedness and bounds of the multilinear Hausdorff operator on the two weighted Herz spaces.
\begin{theorem} \label{theo3.2}
Let $\omega_i(x)=|x|^{\gamma_i}, v_i(x)=|x|^{\beta_i}, \textit{ for all } i=1,...,m, \omega(x)=|x|^\gamma, v(x)=|x|^\beta$, and the following conditions hold
$$\sum\limits_{i=1}^m \frac{\gamma_i}{q_i}=\frac{\gamma}{q}, \textit{ and }
\sum\limits_{i=1}^m \left(1+\frac{\beta_i}{n}\right)\alpha_i=\left(1+\frac{\beta}{n}\right)\alpha.
$$
{\rm(i)}  If 
\begin{align*}
\mathcal{C}_{2}=\int_{\mathbb{R}^n}\frac{|\Phi(y)|}{|y|^n}\prod_{i=1}^m\|A^{-1}_i(y)\|^{\left(1+\frac{\beta_i}{n}  \right)\alpha_i+\frac{n+\gamma_i}{q_i}}  dy<\infty,
\end{align*}
then ${\mathcal{H}}_{\Phi,\vec{A}}$ is bounded from $\prod_{i=1}^m\dot{K}^{\alpha_i,p_i,q_i}_{v_i,\omega_i}(\mathbb{R}^n)$ to $\dot{K}^{\alpha,p,q}_{v,\omega}(\mathbb{R}^n)$. Moreover,
\begin{align*}
\|{\mathcal{H}}_{\Phi,\vec{A}} \|_{\prod_{i=1}^m\dot{K}^{\alpha_i,p_i,q_i}_{v_i,\omega_i}(\mathbb{R}^n) \to \dot{K}^{\alpha,p,q}_{v,\omega}(\mathbb{R}^n)}\lesssim \mathcal{C}_{2}.
\end{align*}
(ii) Conversely, suppose $\Phi$ is a real function with a constant sign in $\mathbb{R}^n$. Then, if ${\mathcal{H}}_{\Phi,\vec{A}}$ is bounded from $\prod_{i=1}^m\dot{K}^{\alpha_i,p_i,q_i}_{v_i,\omega_i}(\mathbb{R}^n)$ to $\dot{K}^{\alpha,p,q}_{v,\omega}(\mathbb{R}^n)$, we have $\mathcal{C}_2<\infty$. Furthermore,
\begin{align*}
\|{\mathcal{H}}_{\Phi,\vec{A}} \|_{\prod_{i=1}^m\dot{K}^{\alpha_i,p_i,q_i}_{v_i,\omega_i}(\mathbb{R}^n) \to \dot{K}^{\alpha,p,q}_{v,\omega}(\mathbb{R}^n)}\gtrsim \mathcal{C}_{2}.
\end{align*}
\end{theorem}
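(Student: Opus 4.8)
The plan is to mirror the architecture of the proof of Theorem \ref{theo3.1}, replacing the single-ball Morrey bound by the dyadic-annulus decomposition that defines the Herz norm, and to exploit the balance condition $\sum_{i=1}^m\left(1+\frac{\beta_i}{n}\right)\alpha_i=\left(1+\frac{\beta}{n}\right)\alpha$ in order to reindex the dyadic sums. For part (i), I would first fix an annulus $C_k$ and reuse the opening steps of Theorem \ref{theo3.1}: since $\sum_i \gamma_i/q_i=\gamma/q$ gives $\prod_i\omega_i^{1/q_i}=\omega^{1/q}$, the Minkowski inequality followed by the H\"older inequality (with $\sum_i 1/q_i=1/q$) yields the pointwise-in-$k$ bound
\[
\|{\mathcal{H}}_{\Phi,\vec{A}}(\vec{f})\chi_k\|_{L^q_\omega(\mathbb{R}^n)}\le \int_{\mathbb{R}^n}\frac{|\Phi(y)|}{|y|^n}\prod_{i=1}^m\|f_i(A_i(y)\cdot)\chi_k\|_{L^{q_i}_{\omega_i}(\mathbb{R}^n)}\,dy.
\]
Using $v(B_k)^{\alpha/n}\simeq 2^{k(1+\beta/n)\alpha}$ (recall $\beta>-n$), I would then form the $\ell^p$-norm over $k$ and apply the generalized Minkowski inequality for the measure $\frac{|\Phi(y)|}{|y|^n}\,dy$ (legitimate since $p\ge 1$) to pull $\big(\sum_k\big)^{1/p}$ inside the $y$-integral, reducing matters to estimating, for fixed $y$,
\[
S(y):=\left(\sum_{k\in\mathbb{Z}}2^{k(1+\beta/n)\alpha p}\prod_{i=1}^m\|f_i(A_i(y)\cdot)\chi_k\|^p_{L^{q_i}_{\omega_i}(\mathbb{R}^n)}\right)^{1/p}.
\]

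The heart of the argument is the estimate of $S(y)$. With the change of variables $z=A_i(y)x$, the bound $|A_i^{-1}(y)z|^{\gamma_i}\lesssim\|A_i^{-1}(y)\|^{\gamma_i}|z|^{\gamma_i}$ and $|\det A_i^{-1}(y)|^{1/q_i}\le\|A_i^{-1}(y)\|^{n/q_i}$ from \eqref{eq3.2}, together with \eqref{eq3.1}, which forces $A_i(y)C_k$ to lie in a bounded number of dyadic annuli centred at level $k+\tau_i(y)$ with $2^{\tau_i(y)}\simeq\|A_i^{-1}(y)\|^{-1}$, I expect
\[
\|f_i(A_i(y)\cdot)\chi_k\|_{L^{q_i}_{\omega_i}(\mathbb{R}^n)}\lesssim\|A_i^{-1}(y)\|^{(n+\gamma_i)/q_i}\,\|f_i\chi_{k+\tau_i(y)}\|_{L^{q_i}_{\omega_i}(\mathbb{R}^n)}.
\]
Substituting this into $S(y)$, writing $2^{k(1+\beta/n)\alpha}=\prod_i 2^{k(1+\beta_i/n)\alpha_i}$ via the balance condition, applying the H\"older inequality over $k$ with exponents $p_i/p$ (valid since $\sum_i p/p_i=1$), and finally shifting the index $k\mapsto k-\tau_i(y)$ inside each factor, I anticipate
\[
S(y)\lesssim\prod_{i=1}^m\|A_i^{-1}(y)\|^{(1+\beta_i/n)\alpha_i+(n+\gamma_i)/q_i}\,\|f_i\|_{\dot{K}^{\alpha_i,p_i,q_i}_{v_i,\omega_i}(\mathbb{R}^n)},
\]
after which integrating against $\frac{|\Phi(y)|}{|y|^n}$ produces exactly $\mathcal{C}_2$ times the product of Herz norms. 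I expect the main obstacle here to be the bookkeeping of the shifts $\tau_i(y)$: different $f_i$ are shifted by different amounts, so the index change must be performed separately within each H\"older factor, and one must verify that the surplus powers of $\|A_i^{-1}(y)\|$ assemble precisely into the exponent of $\mathcal{C}_2$.

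For part (ii), pure power functions have infinite Herz norm, so the test functions of Theorem \ref{theo3.1} cannot be reused verbatim; instead I would take critical powers truncated to a growing annulus. Setting $\delta_i=-(1+\beta_i/n)\alpha_i-(n+\gamma_i)/q_i$ and $f_{i,L}(x)=|x|^{\delta_i}\chi_{\{2^{-L}\le|x|\le 2^{L}\}}(x)$, the choice of $\delta_i$ makes every dyadic summand constant, so that $\|f_{i,L}\|_{\dot{K}^{\alpha_i,p_i,q_i}_{v_i,\omega_i}(\mathbb{R}^n)}\simeq L^{1/p_i}$ and hence $\prod_i\|f_{i,L}\|\simeq L^{\sum_i 1/p_i}=L^{1/p}$. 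On the other hand, \eqref{eq3.3} and the constant sign of $\Phi$ give $|A_i(y)x|^{\delta_i}\gtrsim\|A_i^{-1}(y)\|^{-\delta_i}|x|^{\delta_i}$, and since $-\delta_i$ is exactly the exponent in $\mathcal{C}_2$ while $\delta:=\sum_i\delta_i=-(1+\beta/n)\alpha-(n+\gamma)/q$ is the critical exponent of the target space (here both balance conditions and $\sum_i 1/q_i=1/q$ are used), I would show $\mathcal{H}_{\Phi,\vec{A}}(\vec{f}_L)(x)\gtrsim\mathcal{C}_2\,|x|^{\delta}$ for $x$ in a middle range, whence $\|\mathcal{H}_{\Phi,\vec{A}}(\vec{f}_L)\|_{\dot{K}^{\alpha,p,q}_{v,\omega}(\mathbb{R}^n)}\gtrsim\mathcal{C}_2\,L^{1/p}$. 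Dividing by $\prod_i\|f_{i,L}\|\simeq L^{1/p}$ and letting $L\to\infty$ then yields $\mathcal{C}_2\lesssim\|\mathcal{H}_{\Phi,\vec{A}}\|$. The delicate point in this direction is the truncation: I must restrict $x$ to a slightly smaller annulus and the $y$-integral to the region where the cutoffs $\chi_{\{2^{-L}\le|A_i(y)x|\le 2^{L}\}}$ are inactive, and then argue that the discarded part is of lower order so that the full constant $\mathcal{C}_2$ is recovered in the limit.
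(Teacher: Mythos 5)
Your proposal is correct. Part (i) coincides with the paper's own proof: the paper also fixes $C_k$, applies Minkowski and H\"older, changes variables $z=A_i(y)x$, and uses \eqref{eq3.1} to trap $A_i(y)C_k$ inside a bounded number of dyadic annuli $C_{k+\ell_i+r}$, $r=\kappa^*-1,\dots,0$, with $2^{\ell_i}\simeq\|A_i(y)\|$; the H\"older step over $k$ with exponents $p_i/p$ and the index shift performed separately inside each factor are exactly your steps, your single shift $\tau_i(y)$ being shorthand for the paper's bounded sum over $r$.

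Part (ii) is where you genuinely depart from the paper. The paper perturbs the exponent: it tests on $f_i(x)=|x|^{-(1+\beta_i/n)\alpha_i-(n+\gamma_i)/q_i-\varepsilon}$ cut off to $\{|x|\ge\rho_{\vec A}^{-1}\}$, which has finite Herz norm for each $\varepsilon>0$, restricts the $y$-integration to $E_\varepsilon=\{y:\|A_i(y)\|\ge\varepsilon,\ i=1,\dots,m\}$, and then must control how the ratio of norms degenerates as $\varepsilon\to0$, which forces the computation of $\theta^*(\varepsilon)$ and the verification that $\lim_{\varepsilon\to0}\varepsilon^{-m\varepsilon}\theta^*(\varepsilon)=c>0$, before a dominated-convergence passage yields $\mathcal{C}_2<\infty$. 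You instead keep the exact critical exponents $\delta_i=-(1+\beta_i/n)\alpha_i-(n+\gamma_i)/q_i$ and truncate to $\{2^{-L}\le|x|\le2^L\}$, so that every dyadic block contributes $\simeq1$ and both the input product and the output norm scale like $L^{1/p}$ (via $\sum_i 1/p_i=1/p$); the constant is then recovered by monotone convergence as $L\to\infty$. This trades the paper's $\varepsilon$-asymptotics for a cleaner scaling identity, and it works. The one detail you must supply (the point you flag) is that the admissible $y$-set has to be chosen independently of $x$ in the middle range, e.g. $E_L^*=\{y:\max_i\max(\|A_i(y)\|,\|A_i^{-1}(y)\|)\le 2^{L/2}\}$: for $y\in E_L^*$ and $x\in C_j$ with $|j|\le L/2-1$ all cutoffs are inactive, the restricted Herz sum still contains $\simeq L$ unit blocks, and since $\det A_i(y)\neq0$ a.e.\ the sets $E_L^*$ increase to a.e.\ all of $\mathbb{R}^n$, so monotone convergence gives back the full $\mathcal{C}_2$. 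Note also that no ``lower order'' argument for the discarded part is needed: because $\Phi$ has constant sign and $f_{i,L}\ge0$, dropping the region where the cutoffs act only decreases the integral, which is all a lower bound requires.
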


\begin{proof}
(i) By the same arguments as the inequality \eqref{basic-ineq} in the proof of Theorem \ref{theo3.1}, we also have
\begin{align*}
&\|{\mathcal{H}}_{\Phi,\vec{A}}(\vec{f})\chi_k\|_{L^q_\omega (\mathbb{R}^n)}\\
&\le  \int_{\mathbb{R}^n}\frac{|\Phi(y)|}{|y|^n}\prod_{i=1}^m\max\{\|A_i^{-1}(y)\|^{\gamma_i},\|A_i(y)\|^{-\gamma_i} \}^{\frac{1}{q_i}}|\det A^{-1}_i(y)|^{\frac{1}{q_i}}   \|f_i\|_{L^{q_i}_{\omega_i}(A_i(y)C_k)} dy,
\end{align*}
where $A_i(y)C_k=\{A_i(y)z|z\in C_k \}$. By the condition \eqref{eq3.1}, there exits the greatest integer number $\kappa^*=\kappa^*(y)$ statisfying
\begin{align*}
\max\limits_{i=1,...,m}\{\|A_i(y)\|.\|A_i^{-1}(y)\| \}< 2^{-\kappa^*}, \text{ for a.e } y\in \mathbb{R}^n.
\end{align*}
Note that from the condition $1\leq\|A_i(y)\|.\|A^{-1}_i(y)\|\leq\rho_{\vec{A}}$, for a.e $y\in\mathbb R^n$, $i=1,...,m$, it follows that
$|\kappa^*(y)|\simeq 1$ for a.e $y\in\mathbb R^n$.
\vskip 5pt
Let us now fix $i\in \{1,2,...,m \}$. Since $\|A_i(y)\|\ne 0$, there is an integer number $\ell_i=\ell_i(y)$ such that $2^{\ell_i-1}<\|A_i(y)\|\le 2^{\ell_i}$. For simplicity of notation, we write
\begin{align*}
\rho^*_{\vec{A}}(y)=\max\limits_{i=1,...,m}\{\|A_i(y)\|.\|A_i^{-1}(y)\| \}.
\end{align*}
Then, by letting $t=A_i(y).z$, with $z\in C_k$, it follows that
\begin{align*}
|t|\ge \|A_i^{-1}(y)\|^{-1}.|z|\ge \frac{2^{k+\ell_i-2}}{\rho^*_{\vec{A}}(y)}>2^{k+\ell_i-2+\kappa^*},
\end{align*}
and
\begin{align*}
|t|\le \|A_i(y)\|.|z|\le 2^{k+\ell_i}.
\end{align*}
These estimates can be used to obtain
\begin{align*}
A_i(y)C_k\subset \{z\in\mathbb{R} ^n: 2^{k+\ell_i-2+\kappa^*}<|z|\le 2^{k+\ell_i}\},
\end{align*}
which implies that 
\begin{align}\label{eq3.5.1} 
\|{\mathcal{H}}_{\Phi,\vec{A}}(\vec{f})\chi_k\|_{L^q_\omega (\mathbb{R}^n)}
&\le  \int_{\mathbb{R}^n}\frac{|\Phi(y)|}{|y|^n}\prod_{i=1}^m\max\{\|A_i^{-1}(y)\|^{\gamma_i},\|A_i(y)\|^{-\gamma_i} \}^{\frac{1}{q_i}}|\det A^{-1}_i(y)|^{\frac{1}{q_i}} \times\notag\\
&\;\;\;\;\;\;\times \left(\sum\limits_{r=\kappa^*-1}^0 \|f_i\chi_{k+\ell_i+r}\|_{L^{q_i}_{\omega_i}(\mathbb{R}^n)}\right) dy.
\end{align}
On the other hand, by the definition of two weighted Herz space and the Minkowski inequality, we get
\begin{align*}
&\|{\mathcal{H}}_{\Phi,\vec{A}}(\vec{f}) \|_{\dot{K}^{\alpha,p,q}_{v,\omega}(\mathbb{R}^n)}\\
&=\left(\sum\limits_{k=-\infty}^\infty v(B_k)^{\frac{\alpha p}{n}}   \left| \int_{\mathbb{R}^n}\frac{|\Phi(y)|}{|y|^n}\prod_{i=1}^m\max\{\|A_i^{-1}(y)\|^{\gamma_i},\|A_i(y)\|^{-\gamma_i} \}^{\frac{1}{q_i}}|\det A^{-1}_i(y)|^{\frac{1}{q_i}}   \right.\right.\times\\
&\left.\left.\times\left(\sum\limits_{r=\kappa^*-1}^0 \|f_i\chi_{k+\ell_i+r}\|_{L^{q_i}_{\omega_i}(\mathbb{R}^n)}\right) dy\right|^p_{L^q_\omega(\mathbb{R}^n)} \right)^{\frac{1}{p}}\\
&\le \int_{\mathbb{R}^n}\frac{|\Phi(y)|}{|y|^n}\prod_{i=1}^m\max\{\|A_i^{-1}(y)\|^{\gamma_i},\|A_i(y)\|^{-\gamma_i} \}^{\frac{1}{q_i}}|\det A^{-1}_i(y)|^{\frac{1}{q_i}}\times\\
&\times      \left(\sum\limits_{k=-\infty}^\infty v(B_k)^{\frac{\alpha p}{n}}\left(\prod_{i=1}^m    \sum\limits_{r=\kappa^*-1}^0 \|f_i\chi_{k+\ell_i+r}\|_{L^{q_i}_{\omega_i}(\mathbb{R}^n)} \right)^p          \right)^{\frac{1}{p}}dy.
\end{align*}
Notice that taking the condition $\sum\limits_{i=1}^m (n+\beta_i)\alpha_i=(n+\beta)\alpha $ into account, we get $v(B_k)^\alpha\simeq \prod_{i=1}^mv_i(B_k)^{\alpha_i}$. Applying the H\"{o}lder inequality, we have
\begin{align}\label{eq3.5.2}
&\left(\sum\limits_{k=-\infty}^\infty v(B_k)^{\frac{\alpha p}{n}}\left(\prod_{i=1}^m    \sum\limits_{r=\kappa^*-1}^0 \|f_i\chi_{k+\ell_i+r}\|_{L^{q_i}_{\omega_i}(\mathbb{R}^n)} \right)^p          \right)^{\frac{1}{p}}\notag\\
&\simeq\left(\sum\limits_{k=-\infty}^\infty \left(\prod_{i=1}^m   v_i(B_k)^{\frac{\alpha_i}{n}} \sum\limits_{r=\kappa^*-1}^0 \|f_i\chi_{k+\ell_i+r}\|_{L^{q_i}_{\omega_i}(\mathbb{R}^n)} \right)^p          \right)^{\frac{1}{p}}\notag\\
&\lesssim \prod_{i=1}^m \left(\sum\limits_{k=-\infty}^\infty v_i(B_k)^{\frac{\alpha_i p_i}{n}}\left(   \sum\limits_{r=\kappa^*-1}^0 \|f_i\chi_{k+\ell_i+r}\|_{L^{q_i}_{\omega_i}(\mathbb{R}^n)} \right)^{p_i}          \right)^{\frac{1}{p_i}}.
\end{align}
Moreover, by using the known inequality $\left(\sum_{i=1}^N|a_i|\right)^p\leq N^{p-1}\sum_{i=1}^N|a_i|^p$ for all $p\ge 1$, we have, by  $p_i\ge 1$, 
\begin{align}\label{eq3.5.3}
\left(\sum\limits_{r=\kappa^*-1}^0 \|f_i\chi_{k+\ell_i+r}\|_{L^{q_i}_{\omega_i}(\mathbb{R}^n)}\right)^{p_i} \le (2-\kappa^*)^{p_i-1} \sum\limits_{r=\kappa^*-1}^0 \|f_i\chi_{k+\ell_i+r}\|_{L^{q_i}_{\omega_i}(\mathbb{R}^n)}^{p_i}.
\end{align}
Consequently, we obtain
\begin{align*}
&\|{\mathcal{H}}_{\Phi,\vec{A}}(\vec{f}) \|_{\dot{K}^{\alpha,p,q}_{v,\omega}(\mathbb{R}^n)}\\
&\lesssim \int_{\mathbb{R}^n}(2-\kappa^*)^{m-\frac{1}{p}}   \frac{|\Phi(y)|}{|y|^n}\prod_{i=1}^m\max\{\|A_i^{-1}(y)\|^{\gamma_i},\|A_i(y)\|^{-\gamma_i} \}^{\frac{1}{q_i}}|\det A^{-1}_i(y)|^{\frac{1}{q_i}}.\mathcal{H}_{1i} dy,
\end{align*}
where
\begin{align*}
\mathcal{H}_{1i}:= \prod_{i=1}^m\sum\limits_{r=\kappa^*-1}^0 \left( \sum\limits_{k=-\infty}^{\infty}  v_i(B_k)^{\frac{\alpha_i p_i}{n}}     \|f_i\chi_{k+\ell_i+r}\|_{L^{q_i}_{\omega_i}(\mathbb{R}^n)}^{p_i} \right)^{\frac{1}{p_i}}.
\end{align*}
Since $2^{\ell_i-1}<\|A_i(y) \|\le 2^{\ell_i}$, it implies that
\begin{align*}
2^{-\ell_i}\lesssim \|A_i(y)\|^{-1}\Rightarrow 2^{-\ell_i\left(1+\frac{\beta_i}{n}  \right)\alpha_i}\lesssim   \|A_i(y)\|^{-\left(1+\frac{\beta_i}{n}  \right)\alpha_i}.
\end{align*}
Also, remark that $ v_i(B_k)^{\frac{\alpha_i p_i}{n}}= 2^{-(\ell_i+r)\left(1+\frac{\beta_i}{n}  \right)\alpha_ip_i}.v_i(B_{k+\ell_i+r})^{\frac{\alpha_i p_i}{n}}$. 
Hence, it is easy to get that
\begin{align*}
\mathcal{H}_{1i}
&\le \prod_{i=1}^m \sum\limits_{r=\kappa^*-1}^0 2^{-(\ell_i+r){\left(1+\frac{\beta_i}{n}  \right)\alpha_i}}  \left( \sum\limits_{k=-\infty}^{\infty}  v_i(B_{k+\ell_i+r})^{\frac{\alpha_i p_i}{n}}     \|f_i\chi_{k+\ell_i+r}\|_{L^{q_i}_{\omega_i}(\mathbb{R}^n)}^{p_i} \right)^{\frac{1}{p_i}}\\
&\lesssim \prod_{i=1}^m \left(\sum\limits_{r=\kappa^*-1}^0 2^{-r{\left(1+\frac{\beta_i}{n}  \right)\alpha_i}} \right)\|A_i(y)\|^{-{\left(1+\frac{\beta_i}{n}  \right)\alpha_i}}\|f_i\|_{\dot{K}^{\alpha_i,p_i,q_i}_{v_i,\omega_i}(\mathbb{R}^n)}.
\end{align*}
Thus, we obtain
\begin{align*}
&\|{\mathcal{H}}_{\Phi,\vec{A}}(\vec{f}) \|_{\dot{K}^{\alpha,p,q}_{v,\omega}(\mathbb{R}^n)}\\
&\lesssim\int_{\mathbb{R}^n}(2-\kappa^*)^{m-\frac{1}{p}}\frac{|\Phi(y)|}{|y|^n}\prod_{i=1}^m\max\{\|A_i^{-1}(y)\|^{\gamma_i},\|A_i(y)\|^{-\gamma_i} \}^{\frac{1}{q_i}}|\det A^{-1}_i(y)|^{\frac{1}{q_i}}\times\\
&\;\;\;\;\;\times \|A_i(y)\|^{-{\left(1+\frac{\beta_i}{n}  \right)\alpha_i}}  \left(\sum\limits_{r=\kappa^*-1}^0 2^{-r{\left(1+\frac{\beta_i}{n}  \right)\alpha_i}} \right)dy . \prod_{i=1}^m\|f_i\|_{\dot{K}^{\alpha_i,p_i,q_i}_{v_i,\omega_i}(\mathbb{R}^n)}.
\end{align*}
Similarly to estimate for the expression \eqref{eq3.5}, we get
\begin{align*}
&\prod_{i=1}^m\max\{\|A_i^{-1}(y)\|^{\gamma_i},\|A_i(y)\|^{-\gamma_i} \}^{\frac{1}{q_i}}|\det A^{-1}_i(y)|^{\frac{1}{q_i}}\|A_i(y)\|^{-\left(1+\frac{\beta_i}{n}  \right)\alpha_i}\\
&\lesssim \prod_{i=1}^m\|A^{-1}_i(y)\|^{\frac{n}{q_i}}\|A^{-1}_i(y)\|^{\frac{\gamma_i}{q_i}}\|A_i^{-1}(y)\|^{\left(1+\frac{\beta_i}{n}  \right)\alpha_i}=\prod_{i=1}^m\|A^{-1}_i(y)\|^{\left(1+\frac{\beta_i}{n}  \right)\alpha_i+\frac{n+\gamma_i}{q_i}}.
\end{align*}
Therefore, by $\kappa^*=|\kappa^*(y)|\simeq 1$ for a.e $y\in\mathbb R^n$, we have
\begin{align*}
\|{\mathcal{H}}_{\Phi,\vec{A}}(\vec{f}) \|_{\dot{K}^{\alpha,p,q}_{v,\omega}(\mathbb{R}^n)}
\lesssim \mathcal{C}_2. \prod_{i=1}^m\|f_i\|_{\dot{K}^{\alpha_i,p_i,q_i}_{v_i,\omega_i}(\mathbb{R}^n)},
\end{align*}
which means that ${\mathcal{H}}_{\Phi,\vec{A}}$ is bounded from the product space $\prod_{i=1}^m\dot{K}^{\alpha_i,p_i,q_i}_{v_i,\omega_i}(\mathbb{R}^n)$ to $\dot{K}^{\alpha,p,q}_{v,\omega}(\mathbb{R}^n)$. The proof of part (i) is completed.
\vskip 5pt
(ii) Conversely, suppose that ${\mathcal{H}}_{\Phi,\vec{A}}$ is a bounded operator from  $\prod_{i=1}^m\dot{K}^{\alpha_i,p_i,q_i}_{v_i,\omega_i}(\mathbb{R}^n)$ to $\dot{K}^{\alpha,p,q}_{v,\omega}(\mathbb{R}^n)$. For all $i=1,...,m$, let us choose the functions $f_i$ as follows
\begin{align*}
f_i(x)=\begin{cases}0, \hspace*{3cm} &|x|<\rho_{\vec{A}}^{-1},\\ |x|^{-\left(1+\frac{\beta_i}{n} \right)\alpha_i-\frac{n}{q_i}-\frac{\gamma_i}{q_i}-\varepsilon}, & \text{otherwise}.\end{cases}
\end{align*}
It is obvious to see that for any integer number $k$ statisfying $k<-\frac{\ln(\rho_{\vec{A}})}{\ln2}$, then $\|f_i\chi_k\|_{L^{q_i}_{\omega_i}}=0$ for all $i=1,...,m$. Otherwise, one has
\begin{align*}
\|f_i\chi_k \|_{L^{q_i}_{\omega_i}(\mathbb{R}^n)}&=\left(\int_{C_k}\int_{S_{n-1}} r^{-\left(1+\frac{\beta_i}{n} \right)\alpha_iq_i-n-\gamma_i-q_i\varepsilon}r^{\gamma_i}r^{n-1}d\sigma(x')dr   \right)^{\frac{1}{q_i}}\\
&= |S_{n-1}|^{\frac{1}{q_i}}\left(\int_{C_k} r^{-\left(1+\frac{\beta_i}{n} \right)\alpha_iq_i-q_i\varepsilon-1}dr \right)^{\frac{1}{q_i}}\\
&\simeq  2^{-k\left(\varepsilon+\left(1+\frac{\beta_i}{n} \right)\alpha_i\right)}\left(\frac{2^{q_i\left(\varepsilon+\left(1+\frac{\beta_i}{n} \right)\alpha_i\right)}-1}{q_i\left(\varepsilon+\left(1+\frac{\beta_i}{n} \right)\alpha_i\right)} \right)^{\frac{1}{q_i}}.
\end{align*}
Evidently, $v_i(B_k)\simeq 2^{k(n+\beta_i)}$. Therefore, an easy computation shows that
\begin{align*}
\|f_i\|_{\dot{K}^{\alpha_i,p_i,q_i}_{v_i,\omega_i}(\mathbb{R}^n)}
&\simeq\left(\sum\limits_{k\geq \theta}^\infty v_i(B_k)^{\frac{\alpha_i p_i}{n}}\left|2^{-k\left(\varepsilon+\left(1+\frac{\beta_i}{n} \right)\alpha_i\right)}\left(\frac{2^{q_i\left(\varepsilon+\left(1+\frac{\beta_i}{n} \right)\alpha_i\right)}-1}{q_i\left(\varepsilon+\left(1+\frac{\beta_i}{n} \right)\alpha_i\right)} \right)^{\frac{1}{q_i}}\right|^{p_i} \right)^{\frac{1}{p_i}}\\
&\simeq  \left( \sum\limits_{k\geq \theta}^{\infty}2^{-k\varepsilon p_i}\right)^{\frac{1}{p_i}}\left(\frac{2^{q_i\left(\varepsilon+\left(1+\frac{\beta_i}{n} \right)\alpha_i\right)}-1}{q_i\left(\varepsilon+\left(1+\frac{\beta_i}{n} \right)\alpha_i\right)} \right)^{\frac{1}{q_i}}\\
&= \left( {\frac{{{2^{(1-\theta)\varepsilon {p_i} }}}}{{{2^{\varepsilon {p_i}}} - 1}}} \right)^{\frac{1}{{{p_i}}}}\left(\frac{2^{q_i\left(\varepsilon+\left(1+\frac{\beta_i}{n} \right)\alpha_i\right)}-1}{q_i\left(\varepsilon+\left(1+\frac{\beta_i}{n} \right)\alpha_i\right)} \right)^{\frac{1}{q_i}},
\end{align*}
where $\theta$ is the smallest integer number such that $\theta\geq\frac{-{\rm ln}(\rho_{\vec A})}{{\rm ln2}}$. Next, consider  two useful sets as follows
\begin{align*}
D_x=\bigcap\limits_{i=1}^m\{y\in\mathbb{R}^n: |A_i(y)x|\ge \rho^{-1}_{\vec{A}} \},
\end{align*}
and
\begin{align*}
E=\{y\in\mathbb{R}^n: \|A_i(y)\|\ge \varepsilon, ~ \text{for all} ~ i=1,...,m \}.
\end{align*}
It is not difficult to show that 
\begin{align}\label{eq3.6}
E\subset D_x ~ \text{for all}~x\in\mathbb{R}^n\backslash B\left(0,\varepsilon \right).
\end{align}
Indeed, let $y\in E$. It is easy to check that $\|A_i(y)\|.|x|\ge 1$ for all $x\in \mathbb{R}^n\backslash B\left(0,\varepsilon \right)$. Hence, it follows from  the condition \eqref{eq3.1} that
\begin{align*}
|A_i(y)x|\ge \| A_i^{-1}(y)\|^{-1}.|x|\ge \rho^{-1}_{\vec{A}},
\end{align*}
which implies the proof of the relation \eqref{eq3.6}. Now, by letting $y\in \mathbb{R}^n\backslash B\left(0,\varepsilon^{-1} \right)$ and using \eqref{eq3.1},  \eqref{eq3.6}, we get
\begin{align*}
{\mathcal{H}}_{\Phi,\vec{A}}(\vec{f})(x)
&\ge\int_{D_x}\frac{\Phi(y)}{|y|^n}\prod_{i=1}^m|A_i(y).x|^{-\left(1+\frac{\beta_i}{n} \right)\alpha_i-\frac{(n+\gamma_i)}{q_i}-\varepsilon}dy\\
&\ge\int_{E}\frac{\Phi(y)}{|y|^n}\prod_{i=1}^m|A_i(y).x|^{-\left(1+\frac{\beta_i}{n} \right)\alpha_i-\frac{(n+\gamma_i)}{q_i}-\varepsilon}dy\\
&\gtrsim \left(\int_{E}\frac{\Phi(y)}{|y|^n}\prod_{i=1}^m\|A_i^{-1}(y)\|^{\left(1+\frac{\beta_i}{n} \right)\alpha_i+\frac{n+\gamma_i}{q_i}+\varepsilon}dy \right). |x|^{-\left(1+\frac{\beta}{n} \right)\alpha-\frac{(n+\gamma)}{q}-m\varepsilon}\chi_{\mathbb{R}^n\backslash B\left(0,\varepsilon^{-1} \right)}.
\end{align*}
Let $k_0$ be the smallest integer number such that $2^{k_0-1}\ge \varepsilon^{-1}$. We thus have
\begin{align*}
&\| {\mathcal{H}}_{\Phi,\vec{A}}(\vec{f})\chi_k \|_{L^q_\omega(\mathbb{R}^n)}\\
&\gtrsim \left( \int_{C_k}\left|\left(\int_{E}\frac{\Phi(y)}{|y|^n}\prod_{i=1}^m                  \|A_i^{-1}(y)\|^{\left(1+\frac{\beta_i}{n} \right)\alpha_i+\frac{n+\gamma_i}{q_i}+\varepsilon}dy \right) |x|^{-\left(1+\frac{\beta}{n} \right)\alpha-\frac{(n+\gamma)}{q}-m\varepsilon}\right|^q\omega(x)dx\right)^{\frac{1}{q}}\\
&\gtrsim \left(\int_{E}\frac{|\Phi(y)|}{|y|^n}\prod_{i=1}^m                  \|A_i^{-1}(y)\|^{\left(1+\frac{\beta_i}{n} \right)\alpha_i+\frac{n+\gamma_i}{q_i}+\varepsilon}dy \right)\left( \int_{C_k} |x|^{-\left(1+\frac{\beta}{n} \right)\alpha q-n-m\varepsilon q}dx 
 \right)^{\frac{1}{q}}.
\end{align*}
Consequently, 
\begin{align}\label{HA-Herz1}
\|{\mathcal{H}}_{\Phi,\vec{A}}(\vec{f})\|_{\dot{K}^{\alpha,p,q}_{v,\omega}(\mathbb{R}^n)}
&\gtrsim\left(\sum\limits_{k=k_0}^\infty v(B_k)^{\frac{\alpha p}{n}}\left|\left(\int_{E}\frac{|\Phi(y)|}{|y|^n}\prod_{i=1}^m                  \|A_i^{-1}(y)\|^{\left(1+\frac{\beta_i}{n} \right)\alpha_i+\frac{n+\gamma_i}{q_i}+\varepsilon}dy \right)\right.\right.\times\notag\\
&\;\;\;\;\;\;\left.\left.\times  \left( \int_{C_k} |x|^{-\left(1+\frac{\beta}{n} \right)\alpha q-n-m\varepsilon q}dx 
 \right)^{\frac{1}{q}}\right|^{p} \right)^{\frac{1}{p}}\notag\\
&\gtrsim  \left(\int_{E}\frac{|\Phi(y)|}{|y|^n}\prod_{i=1}^m                  \|A_i^{-1}(y)\|^{\left(1+\frac{\beta_i}{n} \right)\alpha_i+\frac{n+\gamma_i}{q_i}+\varepsilon}dy \right)\times\notag\\
&\;\;\;\;\;\; \times\left(\sum\limits_{k=k_0}^\infty v(B_k)^{\frac{\alpha p}{n}}\left( \int_{C_k} |x|^{-\left(1+\frac{\beta}{n} \right)\alpha q-n-m\varepsilon q}dx 
 \right)^{\frac{p}{q}} \right)^{\frac{1}{p}}.
\end{align}
Observe that $v(B_k)\simeq 2^{k(n+\beta)}$, and an elementary calculation shows that
\begin{align*}
\left( \int_{C_k} |x|^{-\left(1+\frac{\beta}{n} \right)\alpha q-n-m\varepsilon q}dx 
 \right)^{\frac{p}{q}} \simeq 2^{-kp\left(m\varepsilon+\left(1+\frac{\beta}{n} \right)\alpha\right)}\left(\frac{2^{q\left(m\varepsilon+\left(1+\frac{\beta}{n} \right)\alpha\right)}-1}{q\left(m\varepsilon+\left(1+\frac{\beta}{n} \right)\alpha\right)} \right)^{\frac{p}{q}},
\end{align*}
so
\begin{align}\label{Vanh}
\sum\limits_{k=k_0}^\infty v(B_k)^{\frac{\alpha p}{n}}\left( \int_{C_k} |x|^{-\left(1+\frac{\beta}{n} \right)\alpha q-n-m\varepsilon q}dx 
 \right)^{\frac{p}{q}} \simeq 
 \left( {\frac{{{2^{ - {k_0}\varepsilon mp}}}}{{1 - {2^{ - \varepsilon mp}}}}} \right)
\left(\frac{2^{q\left(m\varepsilon+\left(1+\frac{\beta}{n} \right)\alpha\right)}-1}{q\left(m\varepsilon+\left(1+\frac{\beta}{n} \right)\alpha\right)} \right)^{\frac{p}{q}}.
\end{align}
For simplicity of exposition, we denote
\begin{align*}
\theta^*(\varepsilon)=\frac{ \left( {\frac{{{2^{ - {k_0}\varepsilon mp}}}}{{1 - {2^{ - \varepsilon mp}}}}} \right)^{\frac{1}{p}}
\left(\frac{2^{q\left(m\varepsilon+\left(1+\frac{\beta}{n} \right)\alpha\right)}-1}{q\left(m\varepsilon+\left(1+\frac{\beta}{n} \right)\alpha\right)} \right)^{\frac{1}{q}}}{\prod_{i=1}^m  \left( {\frac{{{2^{(1-\theta)\varepsilon {p_i} }}}}{{{2^{\varepsilon {p_i}}} - 1}}} \right)^{\frac{1}{{{p_i}}}}\left(\frac{2^{q_i\left(\varepsilon+\left(1+\frac{\beta_i}{n} \right)\alpha_i\right)}-1}{q_i\left(\varepsilon+\left(1+\frac{\beta_i}{n} \right)\alpha_i\right)} \right)^{\frac{1}{q_i}}}.
\end{align*}
Using $\sum_{i=1}^m \frac{1}{p_i}=\frac{1}{p}$ and $\sum_{i=1}^m \frac{1}{q_i}=\frac{1}{q}$, it is not hard to check that
$$
\lim_{\varepsilon\to 0} \varepsilon^{-m\varepsilon}\theta^*(\varepsilon)=c>0.
$$
By \eqref{HA-Herz1} and \eqref{Vanh}, it yields that
\begin{align*}
\|{\mathcal{H}}_{\Phi,\vec{A}}(\vec{f})\|_{\dot{K}^{\alpha,p,q}_{v,\omega}(\mathbb{R}^n)}
&\gtrsim  \varepsilon^{-m\varepsilon}\theta^*(\varepsilon){\prod_{i=1}^m   \|f_i\|_{\dot{K}^{\alpha_i,p_i,q_i}_{v_i,\omega_i}(\mathbb{R}^n)}}\times\\
&\times\left(\int_{E}\frac{|\Phi(y)|}{|y|^n}\prod_{i=1}^m\|A_i^{-1}(y)\|^{\left(1+\frac{\beta_i}{n} \right)\alpha_i+\frac{n+\gamma_i}{q_i}}\prod_{i=1}^m\|A_i^{-1}(y)\|^{\varepsilon}\varepsilon^{m\varepsilon} 
dy \right).
\end{align*}
Remark that $\big\|A_i(y)\big\|\geq \varepsilon$ for all $y\in E$, and by \eqref{eq3.1}, we thus have
\begin{equation}\label{prodLeb}
 \prod\limits_{i=1}^m \big\|A_i^{-1}(y)\big\|^\varepsilon.\varepsilon^{m\varepsilon}\leq \rho_{\vec A}^{\varepsilon}\lesssim 1,
\end{equation}
for $\varepsilon$  sufficiently small.
Then, letting $\varepsilon\rightarrow 0$, from assuming that ${\mathcal{H}}_{\Phi,\vec{A}}$ is  bounded  from  $\prod_{i=1}^m\dot{K}^{\alpha_i,p_i,q_i}_{v_i,\omega_i}(\mathbb{R}^n)$ to $\dot{K}^{\alpha,p,q}_{v,\omega}(\mathbb{R}^n)$, by the dominated convergence theorem of Lesbegue, we obtain
$$\int_{\mathbb{R}^n}\frac{|\Phi(y)|}{|y|^n}\prod_{i=1}^m\|A^{-1}_i(y)\|^{\left(1+\frac{\beta_i}{n}  \right)\alpha_i+\frac{n+\gamma_i}{q_i}}  dy<\infty.$$
This ends the proof of the theorem.
\end{proof}
%%%%%%%%%%%%%%%%%%%%%%%%%%%%%%%%%%%%%%%%%%%%%%%%%%%%%%%%%%%
%%%%%%%%%%%%%%%%%%%%%%%%%%%%%%%%%%%%%%%%%%%%%%%%%%%%%%%%%%%

In view of Theorem \ref{theo3.2}, we also obtain the neccessary and sufficient condition for the boundedness of the operator $\mathcal{H}_{\phi,\vec{s}}$ and  multilinear Hardy-Ces\`{a}ro operators on the two weighted Herz spaces. Namely, the following is true.
\begin{corollary}\label{Hardy-Cesaro-Herz}
Let $\phi$ be a nonnegative function. Under the same assumptions as Theorem \ref{theo3.1}, we have that the operator $\mathcal{H}_{\phi,\vec{s}}$ is bounded from  $\prod_{i=1}^m\dot{K}^{\alpha_i,p_i,q_i}_{v_i,\omega_i}(\mathbb{R}^n)$ to $\dot{K}^{\alpha,p,q}_{v,\omega}(\mathbb{R}^n)$ if and only if
\begin{align*}
\mathcal{C}_{2.1}=\int_{{\mathbb{R}}^n}\left(\prod_{i=1}^m|s_i(y)|^{-\left(1+\frac{\beta_i}{n}\right)\alpha_i-\frac{(n+\gamma_i)}{q_i}}\right)\phi(y)  dy<\infty.
\end{align*}
Moreover, 
\begin{align*}
\|\mathcal{H}_{\phi,\vec{s}}\|_{\prod_{i=1}^m\dot{K}^{\alpha_i,p_i,q_i}_{v_i,\omega_i}(\mathbb{R}^n)\to \dot{K}^{\alpha,p,q}_{v,\omega}(\mathbb{R}^n)}\simeq \mathcal{C}_{2.1}.
\end{align*}
In particular, we have that the weighted multilinear Hardy-Ces\`{a}ro operator $U_{\psi,\vec{s}}^{m,n}$ is bounded from $\prod_{i=1}^m\dot{K}^{\alpha_i,p_i,q_i}_{v_i,\omega_i}(\mathbb{R}^n)$ to $\dot{K}^{\alpha,p,q}_{v,\omega}(\mathbb{R}^n)$ if and only if
\begin{align*}
\mathcal{C}_{2.2}=\int_{{[0,1]}^n}\left(\prod_{i=1}^m|s_i(t)|^{-\left(1+\frac{\beta_i}{n}\right)\alpha_i-\frac{(n+\gamma_i)}{q_i}}\right)\psi(t)  dt<\infty.
\end{align*}
Moreover, 
\begin{align*}
\|U_{\psi,\vec{s}}^{m,n} \|_{\prod_{i=1}^m\dot{K}^{\alpha_i,p_i,q_i}_{v_i,\omega_i}(\mathbb{R}^n)\to \dot{K}^{\alpha,p,q}_{v,\omega}(\mathbb{R}^n)}\simeq \mathcal{C}_{2.2}.
\end{align*}
\end{corollary}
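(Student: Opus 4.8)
The plan is to obtain Corollary \ref{Hardy-Cesaro-Herz} as a direct specialization of Theorem \ref{theo3.2}. The key observation is that the hybrid operator $\mathcal{H}_{\phi,\vec{s}}$ is precisely $\mathcal{H}_{\Phi,\vec{A}}$ for the diagonal matrices $A_i(y)={\rm diag}[s_i(y),\ldots,s_i(y)]=s_i(y)I_n$ together with the kernel $\Phi(y)=|y|^n\phi(y)$, since then $\frac{\Phi(y)}{|y|^n}=\phi(y)$ and $A_i(y)x=s_i(y)x$. First I would verify that this choice of $\vec{A}$ satisfies the admissibility condition \eqref{eq3.1}: a direct computation with the matrix norm introduced in Section 3 gives $\|A_i(y)\|=\sqrt{n}\,|s_i(y)|$ and $\|A_i^{-1}(y)\|=\sqrt{n}\,|s_i(y)|^{-1}$, so that $\|A_i(y)\|\cdot\|A_i^{-1}(y)\|=n$ is a finite constant and $\rho_{\vec{A}}=n<\infty$. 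Since $s_i(y)\neq 0$ almost everywhere, $\det A_i(y)\neq 0$ a.e., so $\mathcal{H}_{\Phi,\vec{A}}$ is well defined.

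Next I would substitute these norm values into the quantity $\mathcal{C}_2$ of Theorem \ref{theo3.2}. Using $\frac{|\Phi(y)|}{|y|^n}=\phi(y)$ (recall $\phi\ge 0$) and
$$\|A_i^{-1}(y)\|^{\left(1+\frac{\beta_i}{n}\right)\alpha_i+\frac{n+\gamma_i}{q_i}}=(\sqrt{n})^{\left(1+\frac{\beta_i}{n}\right)\alpha_i+\frac{n+\gamma_i}{q_i}}\,|s_i(y)|^{-\left(1+\frac{\beta_i}{n}\right)\alpha_i-\frac{n+\gamma_i}{q_i}},$$
the prefactors $(\sqrt{n})^{(\cdots)}$ are constants independent of $y$ and hence factor out of the integral. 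This yields $\mathcal{C}_2=C\,\mathcal{C}_{2.1}$ for a positive constant $C=C(n,\alpha_i,\beta_i,\gamma_i,q_i)$, and in particular $\mathcal{C}_2\simeq\mathcal{C}_{2.1}$; thus finiteness of $\mathcal{C}_2$ is equivalent to finiteness of $\mathcal{C}_{2.1}$.

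Applying both directions of Theorem \ref{theo3.2} then closes the main equivalence. For sufficiency, $\mathcal{C}_{2.1}<\infty$ forces $\mathcal{C}_2<\infty$, so part (i) gives boundedness of $\mathcal{H}_{\phi,\vec{s}}$ with $\|\mathcal{H}_{\phi,\vec{s}}\|\lesssim\mathcal{C}_2\simeq\mathcal{C}_{2.1}$. For necessity, the hypothesis $\phi\ge 0$ guarantees that $\Phi(y)=|y|^n\phi(y)\ge 0$ has constant sign, so part (ii) applies and yields $\|\mathcal{H}_{\phi,\vec{s}}\|\gtrsim\mathcal{C}_2\simeq\mathcal{C}_{2.1}$; combining the two bounds gives the stated equivalence $\|\mathcal{H}_{\phi,\vec{s}}\|\simeq\mathcal{C}_{2.1}$.

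Finally, the statement for the weighted multilinear Hardy--Ces\`{a}ro operator $U_{\psi,\vec{s}}^{m,n}$ follows immediately by taking $\phi(y)=\psi(y)\chi_{[0,1]^n}(y)$: the support restriction collapses the integral defining $\mathcal{C}_{2.1}$ to the cube $[0,1]^n$, so $\mathcal{C}_{2.1}=\mathcal{C}_{2.2}$, and both the boundedness characterization and the norm equivalence are inherited verbatim. I expect no genuine obstacle beyond bookkeeping of the constant $C$ arising from the $\sqrt{n}$ factors; the only point requiring care is to confirm that this constant is truly independent of $y$, so that $\mathcal{C}_2\simeq\mathcal{C}_{2.1}$ is a legitimate norm-level equivalence rather than a merely pointwise comparison.
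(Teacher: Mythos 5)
Your proposal is correct and follows exactly the route the paper intends: the paper itself gives no separate proof of this corollary, obtaining it ``in view of Theorem \ref{theo3.2}'' by the same specialization $A_i(y)=s_i(y)I_n$, $\Phi(y)=|y|^n\phi(y)$ that was set up before Corollary \ref{H-phi-Morrey}. Your explicit verification that $\|A_i(y)\|\cdot\|A_i^{-1}(y)\|=n$ (so \eqref{eq3.1} holds), that the $(\sqrt{n})^{(\cdots)}$ factors are $y$-independent constants giving $\mathcal{C}_2\simeq\mathcal{C}_{2.1}$, and that $\phi\ge 0$ supplies the constant-sign hypothesis needed for the necessity direction, is precisely the bookkeeping the paper leaves implicit.
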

It should be pointed out that by the similar arguments to the proof of Theorem \ref{theo3.2}, the results of Corollary \ref{Hardy-Cesaro-Herz} are also true when the power weighed functions $v, \omega$, $v_i, \omega_i$, for $i=1,...,m$, are replaced by the approciately weights of  absolutely homogeneous type. Its proof is omitted and
 left to the reader. For further readings on the absolutely homogeneous weights, one may find in \cite{CDH2016}, \cite{CH2014} and \cite{CHH2017}.  Thus, Corollary \ref{Hardy-Cesaro-Herz} extends the results of Theorem 3.2 in  \cite{CHH2017} to two weighted setting.
%%%%%%%%%%%%%%%%%%%%%%%%%%%%%%%%%%%%%%%%%%%%%%%%%%%%%%%%%%%%%%%%%%%%%%%%%%%%%%%%%%%%%
%%%%%%%%%%%%%%%%%%%%%%%%%%%%%%%%%%%%%%%%%%%%%%%%%%%%%%%%%%%%%%%%%%%%%%%%%%%%%%%%%%%%%
\vskip 5pt
Now, let us establish the boundedness for the multilinear Hausdorff
operators on the two weighted Morrey-Herz spaces.
\begin{theorem} \label{theo3.3}
Let $\lambda_i>0, \omega(x)=|x|^\gamma, v(x)=|x|^\beta, \omega_i(x)=|x|^{\gamma_i}, v_i(x)=|x|^{\beta_i},  \textit{ for all } i=1,...,m, $, and the following conditions hold
$$\sum\limits_{i=1}^m\left(1+\frac{\beta_i}{n}\right)\lambda_i =\left(1+\frac{\beta}{n}\right)\lambda,\; \sum\limits_{i=1}^m \frac{\gamma_i}{q_i}=\frac{\gamma}{q},  \textit{ and }\sum\limits_{i=1}^m \left(1+\frac{\beta_i}{n}\right)\alpha_i=\left(1+\frac{\beta}{n}\right)\alpha.$$
{\rm(i)} If 
\begin{align*}
\mathcal{C}_{3}=\int_{\mathbb{R}^n}\frac{|\Phi(y)|}{|y|^n}\prod_{i=1}^m\|A^{-1}_i(y)\|^{\left(1+\frac{\beta_i}{n}    \right)(\alpha_i-\lambda_i)+\frac{n+\gamma_i}{q_i}}  dy<\infty,
\end{align*}
then ${\mathcal{H}}_{\Phi,\vec{A}}$ is bounded from $\prod_{i=1}^m M\dot{K}^{\alpha_i, \lambda_i, p_i,q_i}_{v_i,\omega_i}(\mathbb{R}^n)$ to $M\dot{K}^{\alpha,\lambda, p,q}_{v,\omega}(\mathbb{R}^n)$. Moreover,
\begin{align*}
\|{\mathcal{H}}_{\Phi,\vec{A}} \|_{\prod_{i=1}^m M\dot{K}^{\alpha_i, \lambda_i, p_i,q_i}_{v_i,\omega_i}(\mathbb{R}^n) \to M\dot{K}^{\alpha,\lambda,p,q}_{v,\omega}(\mathbb{R}^n)}\lesssim \mathcal{C}_{3}.
\end{align*}
{\rm(ii)} Conversely, suppose $\Phi$ is a real function with a constant sign in $\mathbb{R}^n$. Then, if ${\mathcal{H}}_{\Phi,\vec{A}}$ is a bounded operator from $\prod_{i=1}^m M\dot{K}^{\alpha_i, \lambda_i, p_i,q_i}_{v_i,\omega_i}(\mathbb{R}^n)$ to $M\dot{K}^{\alpha,\lambda, p,q}_{v,\omega}(\mathbb{R}^n)$, we have $\mathcal{C}_3<\infty$. Furthermore,
\begin{align*}
\|{\mathcal{H}}_{\Phi,\vec{A}} \|_{\prod_{i=1}^m M\dot{K}^{\alpha_i, \lambda_i, p_i,q_i}_{v_i,\omega_i}(\mathbb{R}^n) \to M\dot{K}^{\alpha, \lambda, p,q}_{v,\omega}(\mathbb{R}^n)}\gtrsim \mathcal{C}_{3}.
\end{align*}
\end{theorem}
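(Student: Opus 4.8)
The plan is to transcribe the proof of Theorem \ref{theo3.2}, adding the bookkeeping forced by the outer weight $v(B_{k_0})^{-\lambda/n}$ and by the truncation of the inner sum to $k\le k_0$ in the Morrey--Herz norm. For part (i) I would start from the annulus estimate \eqref{eq3.5.1}, which controls $\|{\mathcal{H}}_{\Phi,\vec{A}}(\vec{f})\chi_k\|_{L^q_\omega(\mathbb{R}^n)}$ by $\int_{\mathbb{R}^n}\frac{|\Phi(y)|}{|y|^n}\prod_i\max\{\|A_i^{-1}(y)\|^{\gamma_i},\|A_i(y)\|^{-\gamma_i}\}^{1/q_i}|\det A_i^{-1}(y)|^{1/q_i}\big(\sum_{r=\kappa^*-1}^{0}\|f_i\chi_{k+\ell_i+r}\|_{L^{q_i}_{\omega_i}}\big)\,dy$, where $2^{\ell_i-1}<\|A_i(y)\|\le 2^{\ell_i}$ and $|\kappa^*(y)|\simeq 1$. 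Forming the Morrey--Herz norm at a fixed $k_0$, moving the $y$-integral outside by Minkowski's inequality, and then using $v(B_k)^{\alpha}\simeq\prod_i v_i(B_k)^{\alpha_i}$ together with H\"older's inequality (exponents $p_i/p$) exactly as in \eqref{eq3.5.2}, I reduce matters to bounding, for each $i$, the quantity $v(B_{k_0})^{-\lambda/n}\big(\sum_{k\le k_0}v_i(B_k)^{\alpha_i p_i/n}(\sum_{r}\|f_i\chi_{k+\ell_i+r}\|_{L^{q_i}_{\omega_i}})^{p_i}\big)^{1/p_i}$.

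The decisive step is the re-indexing. Linearising the $r$-sum by \eqref{eq3.5.3} and using $v_i(B_k)^{\alpha_i p_i/n}=2^{-(\ell_i+r)(1+\beta_i/n)\alpha_i p_i}\,v_i(B_{k+\ell_i+r})^{\alpha_i p_i/n}$, I substitute $j=k+\ell_i+r$; the constraint $k\le k_0$ becomes $j\le k_0+\ell_i+r$, so the inner sum is dominated directly by $v_i(B_{k_0+\ell_i+r})^{\lambda_i/n}\|f_i\|_{M\dot{K}^{\alpha_i,\lambda_i,p_i,q_i}_{v_i,\omega_i}(\mathbb{R}^n)}$ from the definition of the Morrey--Herz norm. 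Gathering the dyadic prefactors over $i$ together with $v(B_{k_0})^{-\lambda/n}$ produces $2^{-k_0(1+\beta/n)\lambda}\prod_i 2^{k_0(1+\beta_i/n)\lambda_i}$, which is identically $1$ by the balance hypothesis $\sum_i(1+\beta_i/n)\lambda_i=(1+\beta/n)\lambda$; this is exactly where the new condition is consumed and the dependence on $k_0$ disappears. What remains is $\prod_i 2^{(\ell_i+r)(1+\beta_i/n)(\lambda_i-\alpha_i)}$, and since $2^{\ell_i}\simeq\|A_i(y)\|$ and $r$ runs over a set of size $\simeq 1$, this is $\lesssim\prod_i\|A_i^{-1}(y)\|^{(1+\beta_i/n)(\alpha_i-\lambda_i)}$ by \eqref{eq3.2}. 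Combining with the factor $\prod_i\|A_i^{-1}(y)\|^{(n+\gamma_i)/q_i}$ from $\max\{\cdots\}^{1/q_i}|\det A_i^{-1}(y)|^{1/q_i}$ (estimated as after \eqref{eq3.5}) reproduces the kernel of $\mathcal{C}_3$, so ${\mathcal{H}}_{\Phi,\vec{A}}$ is bounded with the stated constant.

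For the converse (ii) I would test with $f_i(x)=|x|^{-(1+\beta_i/n)(\alpha_i-\lambda_i)-(n+\gamma_i)/q_i-\varepsilon}$ for $|x|\ge\rho_{\vec{A}}^{-1}$ and $f_i(x)=0$ otherwise, i.e.\ the Herz test function of Theorem \ref{theo3.2} with $\alpha_i$ replaced by $\alpha_i-\lambda_i$ so that the scaling matches the $v_i(B_{k_0})^{-\lambda_i/n}$ weight. A dyadic computation parallel to the one for $\|f_i\|_{\dot{K}^{\alpha_i,p_i,q_i}_{v_i,\omega_i}}$ shows that the relevant $k_0$-exponent collapses to $-\varepsilon$ (using $\lambda_i>0$), so each Morrey--Herz norm is finite and positive. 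I then rerun the $D_x$, $E$ argument verbatim: with $D_x=\bigcap_i\{y:|A_i(y)x|\ge\rho_{\vec{A}}^{-1}\}$ and $E=\{y:\|A_i(y)\|\ge\varepsilon,\ \forall i\}$, the inclusion \eqref{eq3.6} and the lower bound \eqref{eq3.3} give, for $x\in\mathbb{R}^n\setminus B(0,\varepsilon^{-1})$, a pointwise minorant of ${\mathcal{H}}_{\Phi,\vec{A}}(\vec{f})$ equal to $\big(\int_E\frac{|\Phi(y)|}{|y|^n}\prod_i\|A_i^{-1}(y)\|^{(1+\beta_i/n)(\alpha_i-\lambda_i)+(n+\gamma_i)/q_i+\varepsilon}\,dy\big)|x|^{-(1+\beta/n)(\alpha-\lambda)-(n+\gamma)/q-m\varepsilon}$ up to a constant. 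Taking its Morrey--Herz norm and dividing by $\prod_i\|f_i\|$, the $\varepsilon$-constants assemble into $\varepsilon^{-m\varepsilon}\theta^*(\varepsilon)$, which has a positive finite limit as $\varepsilon\to0$ (by $\sum 1/p_i=1/p$ and $\sum 1/q_i=1/q$); letting $\varepsilon\to0$ and using dominated convergence with \eqref{prodLeb} yields $\mathcal{C}_3<\infty$ and the reverse bound.

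The main obstacle is the interaction in part (i) between the $y$-dependent shift $j=k+\ell_i+r$ (through $\ell_i=\ell_i(y)$) and the fixed truncation level $k_0$: one must transport the upper summation limit through the change of index and check that the Morrey--Herz factor $v_i(B_{k_0+\ell_i+r})^{\lambda_i/n}$ produced by each coordinate recombines, after H\"older, with $v(B_{k_0})^{-\lambda/n}$ into a single $k_0$-independent constant. Everything rests on the balance condition $\sum_i(1+\beta_i/n)\lambda_i=(1+\beta/n)\lambda$ effecting this cancellation exactly; once it is in place, the remaining estimates are the same power-weight dyadic manipulations already carried out for Theorems \ref{theo3.1} and \ref{theo3.2}.
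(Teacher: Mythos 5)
Your part (i) is essentially the paper's own proof: the paper forms the quantity $\mathcal{H}_{2i}$ and performs exactly the re-indexing you describe, using the balance condition $\sum_i\left(1+\tfrac{\beta_i}{n}\right)\lambda_i=\left(1+\tfrac{\beta}{n}\right)\lambda$ to split $v(B_{k_0})^{-\lambda/n}\simeq\prod_i v_i(B_{k_0})^{-\lambda_i/n}$ before the per-coordinate estimate, which is the same cancellation you carry out at the end; both versions then produce the factor $\prod_i\|A_i(y)\|^{(\lambda_i-\alpha_i)\left(1+\beta_i/n\right)}$ and conclude via the analogue of \eqref{eq3.5}.

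In part (ii), however, you take a genuinely different route from the paper. You transplant the converse of Theorem \ref{theo3.2}: truncated test functions $f_i(x)=|x|^{-(1+\beta_i/n)(\alpha_i-\lambda_i)-(n+\gamma_i)/q_i-\varepsilon}\chi_{\{|x|\ge\rho_{\vec A}^{-1}\}}$, the sets $D_x$ and $E$, the $\varepsilon^{m\varepsilon}$ bookkeeping of \eqref{prodLeb}, and a limiting argument as $\varepsilon\to 0$. This does work: your dyadic computation is right that the $k_0$-exponent collapses to $-\varepsilon$ (so the truncated powers have finite, positive Morrey--Herz norm), and the ratio of norms converges to a positive constant, so Fatou/dominated convergence yields $\mathcal{C}_3<\infty$. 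The paper instead exploits the hypothesis $\lambda_i>0$ to avoid all of this machinery: it tests directly with the \emph{untruncated} power $f_i(x)=|x|^{(\lambda_i-\alpha_i)\left(1+\beta_i/n\right)-(n+\gamma_i)/q_i}$ on all of $\mathbb{R}^n$, which belongs to $M\dot{K}^{\alpha_i,\lambda_i,p_i,q_i}_{v_i,\omega_i}(\mathbb{R}^n)$ precisely because $\lambda_i>0$ makes
\begin{align*}
\sup\limits_{k_0\in\mathbb{Z}} 2^{-k_0\left(1+\frac{\beta_i}{n}\right)\lambda_i}\left(\sum\limits_{k=-\infty}^{k_0}2^{k\left(1+\frac{\beta_i}{n}\right)\lambda_i p_i}\right)^{\frac{1}{p_i}}
\end{align*}
a finite constant (this is exactly where pure powers fail in the Herz case $\lambda_i=0$, forcing the $\varepsilon$-regularization there). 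Then \eqref{eq3.3} gives a pointwise minorant whose coefficient is already the full integral defining $\mathcal{C}_3$, over all of $\mathbb{R}^n$, with no sets $E$, no $\varepsilon$, and no passage to the limit. The trade-off: your argument is uniform with Theorem \ref{theo3.2} and would also cover $\lambda_i=0$, while the paper's is considerably shorter and delivers the lower bound $\|\mathcal{H}_{\Phi,\vec A}\|\gtrsim\mathcal{C}_3$ in one step rather than through a limiting procedure.
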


\begin{proof}
(i) The proof  is quite similar to one of Theorem \ref{theo3.2}, but
to convenience to the readers, we also give the proof here.
By \eqref{eq3.5.1} and the  Minkowski inequality, one has
\begin{align*}
&\|{\mathcal{H}}_{\Phi,\vec{A}}(\vec{f}) \|_{M\dot{K}_{v,\omega}^{\alpha,\lambda,p,q}(\mathbb{R}^n) }\\
&\le \sup\limits_{k_0\in \mathbb{Z}}v(B_{k_0})^{-\frac{\lambda}{n}}\left(\sum\limits_{k=-\infty}^{k_0} v(B_k)^{\frac{\alpha p}{n} }  \left|\int_{\mathbb{R}^n}\frac{|\Phi(y)|}{|y|^n}\prod_{i=1}^m\max\{\|A_i^{-1}(y)\|^{\gamma_i},\|A_i(y)\|^{-\gamma_i} \}^{\frac{1}{q_i}} \right.\right.\times\\
&\;\;\;\;\;\left.\left.\times |\det A^{-1}_i(y)|^{\frac{1}{q_i}}  \left(\sum\limits_{r=\kappa^*-1}^0 \|f_i\chi_{k+\ell_i+r}\|_{L^{q_i}_{\omega_i}(\mathbb{R}^n)}\right) dy \right|^p \right)^{\frac{1}{p}}\\
&\le \sup\limits_{k_0\in \mathbb{Z}}v(B_{k_0})^{-\frac{\lambda}{n}} \int_{\mathbb{R}^n}\frac{|\Phi(y)|}{|y|^n}\prod_{i=1}^m\max\{\|A_i^{-1}(y)\|^{\gamma_i},\|A_i(y)\|^{-\gamma_i} \}^{\frac{1}{q_i}}|\det A^{-1}_i(y)|^{\frac{1}{q_i}} \times\\
&\;\;\;\;\;\times\left( \sum\limits_{k=-\infty}^{k_0}v(B_k)^{\frac{\alpha p}{n} }   \prod_{i=1}^m\left(\sum\limits_{r=\kappa^*-1}^0 \|f_i\chi_{k+\ell_i+r}\|_{L^{q_i}_{\omega_i}(\mathbb{R}^n)}\right)^p \right)^{\frac{1}{p}} dy.
\end{align*}
By the same arguments as \eqref{eq3.5.2} and \eqref{eq3.5.3}, we also obtain
\begin{align*}
&\|{\mathcal{H}}_{\Phi,\vec{A}}(\vec{f}) \|_{M\dot{K}_{v,\omega}^{\alpha,\lambda,p,q}(\mathbb{R}^n) }\\
&\lesssim \sup\limits_{k_0\in \mathbb{Z}}v(B_{k_0})^{-\frac{\lambda}{n}} \int_{\mathbb{R}^n}\frac{|\Phi(y)|}{|y|^n}\prod_{i=1}^m\max\{\|A_i^{-1}(y)\|^{\gamma_i},\|A_i(y)\|^{-\gamma_i} \}^{\frac{1}{q_i}}|\det A^{-1}_i(y)|^{\frac{1}{q_i}}  \times\\
&\times (2-\kappa^*)^{m-\frac{1}{p}}\prod_{i=1}^m \sum\limits_{r=\kappa^*-1}^0 \left( \sum\limits_{k=-\infty}^{k_0} v_i(B_k)^{\frac{\alpha_i p_i}{n} } \|f_i\chi_{k+\ell_i+r}\|_{L^{q_i}_{\omega_i}(\mathbb{R}^n)}^{p_i}      \right)^{\frac{1}{p_i}}\\
&\lesssim  \int_{\mathbb{R}^n} (2-\kappa^*)^{m-\frac{1}{p}} \frac{|\Phi(y)|}{|y|^n}\prod_{i=1}^m\max\{\|A_i^{-1}(y)\|^{\gamma_i},\|A_i(y)\|^{-\gamma_i} \}^{\frac{1}{q_i}}|\det A^{-1}_i(y)|^{\frac{1}{q_i}}  \times\\
&\times\prod_{i=1}^m \sum\limits_{r=\kappa^*-1}^0 \sup\limits_{k_0\in \mathbb{Z}}v_i(B_{k_0})^{-\frac{\lambda_i}{n}} \left( \sum\limits_{k=-\infty}^{k_0}  v_i(B_k)^{\frac{\alpha_i p_i}{n} }     \|f_i\chi_{k+\ell_i+r}\|_{L^{q_i}_{\omega_i}(\mathbb{R}^n)}^{p_i} \right)^{\frac{1}{p_i}} dy.
\end{align*}
For simplicity, set
\begin{align*}
\mathcal{H}_{2i}:= \prod_{i=1}^m\sum\limits_{r=\kappa^*-1}^0 \sup\limits_{k_0\in \mathbb{Z}}v_i(B_{k_0})^{-\frac{\lambda_i}{n}} \left( \sum\limits_{k=-\infty}^{k_0}   v_i(B_k)^{\frac{\alpha_i p_i}{n} }     \|f_i\chi_{k+\ell_i+r}\|_{L^{q_i}_{\omega_i}(\mathbb{R}^n)}^{p_i} \right)^{\frac{1}{p_i}}.
\end{align*}
By the similar estimates to $\mathcal{H}_{1i}$, we also get 
\begin{align*}
\mathcal{H}_{2i} \lesssim \prod_{i=1}^m \|A_i(y)\|^{(\lambda_i-\alpha_i) \left(1+\frac{\beta_i}{n}    \right)}.\sum\limits_{r=\kappa^*-1}^0  2^{r\left(1+\frac{\beta_i}{n}    \right)(\lambda_i-\alpha_i)} \|f_i\|_{M\dot{K}^{\alpha_i,\lambda_i,p_i,q_i}_{v_i,\omega_i}(\mathbb{R}^n) }.
\end{align*}
Consequently,
\begin{align*}
&\|{\mathcal{H}}_{\Phi,\vec{A}}(\vec{f}) \|_{M\dot{K}_{v,\omega}^{\alpha,\lambda,p,q}(\mathbb{R}^n) }\\
&\lesssim  \int_{\mathbb{R}^n} (2-\kappa^*)^{m-\frac{1}{p}} \frac{|\Phi(y)|}{|y|^n}\prod_{i=1}^m\max\{\|A_i^{-1}(y)\|^{\gamma_i},\|A_i(y)\|^{-\gamma_i} \}^{\frac{1}{q_i}}.|\det A^{-1}_i(y)|^{\frac{1}{q_i}}\times\\
&\times \|A_i(y)\|^{(\lambda_i-\alpha_i) \left(1+\frac{\beta_i}{n}    \right)}  \left(\sum\limits_{r=\kappa^*-1}^0  2^{r\left(1+\frac{\beta_i}{n}    \right)(\lambda_i-\alpha_i)}\right) dy .\prod_{i=1}^m \|f_i\|_{M\dot{K}^{\alpha_i,\lambda_i,p_i,q_i}_{v_i,\omega_i}(\mathbb{R}^n) }.
\end{align*}
It is useful to note that $\kappa^*=|\kappa^*(y)|\simeq 1$ for a.e $y\in\mathbb R^n$. By the same argument as the inequality \eqref{eq3.5}, it is clear that
\begin{align*}
&\prod_{i=1}^m\max\{\|A_i^{-1}(y)\|^{\gamma_i},\|A_i(y)\|^{-\gamma_i} \}^{\frac{1}{q_i}}|\det A^{-1}_i(y)|^{\frac{1}{q_i}}\|A_i(y)\|^{(\lambda_i-\alpha_i) \left(1+\frac{\beta_i}{n}    \right)}\\
&\lesssim\prod_{i=1}^m\|A^{-1}_i(y)\|^{\left(1+\frac{\beta_i}{n}    \right)(\alpha_i-\lambda_i)  +\frac{n+\gamma_i}{q_i}}.
\end{align*}
Hence, we obtain 
\begin{align*}
&\|{\mathcal{H}}_{\Phi,\vec{A}}(\vec{f}) \|_{M\dot{K}^{\alpha,\lambda,p,q}_{v,\omega}(\mathbb{R}^n)}\\
&\lesssim  \left(\int_{\mathbb{R}^n}\frac{|\Phi(y)|}{|y|^n}\prod_{i=1}^m\|A^{-1}_i(y)\|^{\left(1+\frac{\beta_i}{n}    \right)(\alpha_i-\lambda_i)  +\frac{n+\gamma_i}{q_i}} dy\right).\prod_{i=1}^m \|f_i\|_{M\dot{K}^{\alpha_i,\lambda_i,p_i,q_i}_{v,\omega}(\mathbb{R}^n)}\\
&\lesssim \mathcal{C}_3.\prod_{i=1}^m \|f_i\|_{M\dot{K}^{\alpha_i,\lambda_i,p_i,q_i}_{v,\omega}(\mathbb{R}^n)}.
\end{align*}
This shows that ${\mathcal{H}}_{\Phi,\vec{A}}$ is bounded from the product space $\prod_{i=1}^m M\dot{K}^{\alpha_i,\lambda_i,p_i,q_i}_{v_i,\omega_i}(\mathbb{R}^n)$ to $M\dot{K}_{v,\omega}^{\alpha,\lambda,p,q}(\mathbb{R}^n) $. Therefore, the part (i) of the theorem is proved.
\vskip 5pt
(ii) For each $i=1,...,m$, let us take
\begin{align*}
f_i(x)=|x|^{(\lambda_i-\alpha_i)\left(1+\frac{\beta_i}{n}\right)-\frac{(n+\gamma_i)}{q_i}}.
\end{align*}
It is not hard to check that
\begin{align*}
\|f_i\chi_k \|_{L^{q_i}_{\omega_i}(\mathbb{R}^n)}&=\begin{cases} \ln 2, ~ {\rm for }~(\lambda_i-\alpha_i)\left(1+\frac{\beta_i}{n}\right)=0,\\
2^{k(\lambda_i-\alpha_i)\left(1+\frac{\beta_i}{n}\right)}\left(\frac{1-2^{-q_i(\lambda_i-\alpha_i)\left(1+\frac{\beta_i}{n}\right)}}{(\lambda_i-\alpha_i)\left(1+\frac{\beta_i}{n}\right)q_i} \right)^{\frac{1}{q_i}}, ~{\rm otherwise}  .   \end{cases}
\end{align*}
Hence, we have
\begin{align*}
0<\|f_i\|_{M\dot{K}^{\alpha_i,\lambda_i,p_i,q_i}_{v_i,\omega_i}(\mathbb{R}^n) }
\simeq\sup\limits_{k_0\in \mathbb{Z}} 2^{-k_0\left(1+\frac{\beta_i}{n}\right)\lambda_i}.\left( \sum\limits_{k=-\infty}^{k_0}2^{k\left(1+\frac{\beta_i}{n}\right)\lambda_i p_i } \right)^{\frac{1}{p_i}}<\infty.
\end{align*}
From \eqref{eq3.3} and the condition $\sum\limits_{i=1}^m \left(1+\frac{\beta_i}{n}   \right)\alpha_i=\left(1+\frac{\beta}{n}  \right)\alpha$, it follows that
\begin{align*}
|A_i(y)x|^{\sum\limits_{i=1}^m \left((\lambda_i-\alpha_i)\left(1+\frac{\beta_i}{n}\right)-\frac{(n+\gamma_i)}{q_i}\right)}
\ge \prod_{i=1}^m\|A_i^{-1}(y)\|^{\left(1+\frac{\beta_i}{n}   \right)(\alpha_i-\lambda_i)+\frac{(n+\gamma_i)}{q_i}}.|x|^{(\lambda-\alpha)\left(1+\frac{\beta}{n}  \right)-\frac{(n+\gamma)}{q}}.
\end{align*}
This leads to that
\begin{align*}
{\mathcal{H}}_{\Phi,\vec{A}}(\vec{f})(x)&=\int_{\mathbb{R}^n}\frac{\Phi(y)}{|y|^n}\prod_{i=1}^m                  |A_i(y).x|^{(\lambda_i-\alpha_i)\left(1+\frac{\beta_i}{n}   \right)-\frac{(n+\gamma_i)}{q_i}}dy\\
&\gtrsim \left(\int_{\mathbb{R}^n}\frac{\Phi(y)}{|y|^n}\prod_{i=1}^m                  \|A_i^{-1}(y)\|^{\left(1+\frac{\beta_i}{n}   \right)(\alpha_i-\lambda_i)+\frac{(n+\gamma_i)}{q_i} }dy \right). |x|^{(\lambda-\alpha)\left(1+\frac{\beta}{n}  \right)-\frac{(n+\gamma)}{q}}.
\end{align*}
So,
\begin{align*}
&\| {\mathcal{H}}_{\Phi,\vec{A}}(\vec{f})(x) \|_{M\dot{K}^{\alpha_i,\lambda_i,p_i,q_i}_{v_i,\omega_i}(\mathbb{R}^n) }\\
&\gtrsim \left(\int_{\mathbb{R}^n}\frac{\Phi(y)}{|y|^n}\prod_{i=1}^m                  \|A_i^{-1}(y)\|^{\left(1+\frac{\beta_i}{n}   \right)(\alpha_i-\lambda_i)+\frac{(n+\gamma_i)}{q_i} }dy \right)\||x|^{(\lambda-\alpha)\left(1+\frac{\beta}{n}  \right)-\frac{(n+\gamma)}{q}} \|_{M\dot{K}_{v,\omega}^{\alpha,\lambda,p,q}(\mathbb{R}^n) }.
\end{align*}
Therefore, we obtain
\begin{align*}
\int_{\mathbb{R}^n}\frac{\Phi(y)}{|y|^n}\prod_{i=1}^m                  \|A_i^{-1}(y)\|^{\left(1+\frac{\beta_i}{n}   \right)(\alpha_i-\lambda_i)+\frac{(n+\gamma_i)}{q_i} }dy\lesssim\|{\mathcal{H}}_{\Phi,\vec{A}} \|_{\prod_{i=1}^mM\dot{K}^{\alpha_i,\lambda_i,p_i,q_i}_{v_i,\omega_i}(\mathbb{R}^n) \to M\dot{K}_{v,\omega}^{\alpha,\lambda,p,q}(\mathbb{R}^n) }<\infty,
\end{align*}
which finishes the proof of this theorem.
\end{proof}
By Theorem \ref{theo3.3}, we have the following useful corollary.
\begin{corollary}
Let $\phi$ be a nonnegative function. Under the same assumptions as Theorem \ref{theo3.1}, we have that the operator $\mathcal{H}_{\phi,\vec{s}}$ is bounded from  $\prod_{i=1}^mM\dot{K}^{\alpha_i,\lambda_i,p_i,q_i}_{v_i,\omega_i}(\mathbb{R}^n)$ to $M\dot{K}_{v,\omega}^{\alpha,\lambda,p,q}(\mathbb{R}^n)$
if and only if
\begin{align*}
\mathcal{C}_{3.1}=\int_{{\mathbb R}^n}\left(\prod_{i=1}^m|s_i(y)|^{(\lambda_i-\alpha_i)\left(1+\frac{\beta_i}{n}    \right)-\frac{(n+\gamma_i)}{q_i}}\right)\phi(y)  dy<\infty.
\end{align*}
Moreover,
\begin{align*}
\|\mathcal{H}_{\phi,\vec{s}}\|_{\prod_{i=1}^m M\dot{K}^{\alpha_i,p_i,q_i}_{v_i,\omega_i}(\mathbb{R}^n)\to M\dot{K}^{\alpha,p,q}_{v,\omega}(\mathbb{R}^n)}\simeq \mathcal{C}_{3.1}.
\end{align*}
\end{corollary}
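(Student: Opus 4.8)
The plan is to deduce this corollary directly from Theorem~\ref{theo3.3} by specializing the general multilinear Hausdorff operator ${\mathcal{H}}_{\Phi,\vec{A}}$ to the scalar diagonal matrices $A_i(y)={\rm diag}[s_i(y),\dots,s_i(y)]=s_i(y)I_n$. First I would record the dictionary between the two operators: choosing $\Phi(y)=|y|^n\phi(y)$, so that $\frac{\Phi(y)}{|y|^n}=\phi(y)$, and using $A_i(y)x=s_i(y)x$, one sees that ${\mathcal{H}}_{\Phi,\vec{A}}(\vec{f})(x)$ coincides exactly with $\mathcal{H}_{\phi,\vec{s}}(\vec{f})(x)$. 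Thus the corollary is nothing more than Theorem~\ref{theo3.3} read through this identification, once the hypotheses of that theorem are checked to hold.

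The second step is to verify condition \eqref{eq3.1} and to evaluate the matrix norms entering $\mathcal{C}_3$. Since $s_i(y)\ne 0$ almost everywhere, each $A_i(y)=s_i(y)I_n$ is invertible with $A_i^{-1}(y)=s_i(y)^{-1}I_n$, and the Frobenius norm used in this section gives $\|A_i(y)\|=\sqrt{n}\,|s_i(y)|$ and $\|A_i^{-1}(y)\|=\sqrt{n}\,|s_i(y)|^{-1}$. Consequently $\|A_i(y)\|\cdot\|A_i^{-1}(y)\|=n$ for a.e.\ $y$, so $\rho_{\vec{A}}=n<\infty$ and \eqref{eq3.1} is satisfied, which is precisely what is needed to apply Theorem~\ref{theo3.3}.

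The third step is the elementary bookkeeping that converts $\mathcal{C}_3$ into $\mathcal{C}_{3.1}$. Writing $e_i:=\left(1+\frac{\beta_i}{n}\right)(\alpha_i-\lambda_i)+\frac{n+\gamma_i}{q_i}$, the substitution $\|A_i^{-1}(y)\|=\sqrt{n}\,|s_i(y)|^{-1}$ and the identity $-e_i=(\lambda_i-\alpha_i)\left(1+\frac{\beta_i}{n}\right)-\frac{n+\gamma_i}{q_i}$ yield
\begin{align*}
\mathcal{C}_3 &= \int_{\mathbb{R}^n} \phi(y) \prod_{i=1}^m \left(\sqrt{n}\right)^{e_i} |s_i(y)|^{-e_i}\, dy\\
&= n^{\frac{1}{2}\sum_{i=1}^m e_i} \int_{\mathbb{R}^n} \left(\prod_{i=1}^m |s_i(y)|^{(\lambda_i-\alpha_i)\left(1+\frac{\beta_i}{n}\right)-\frac{n+\gamma_i}{q_i}}\right) \phi(y)\, dy.
\end{align*}
The dimensional factor $n^{\frac{1}{2}\sum_i e_i}$ depends only on $n,\alpha_i,\lambda_i,\beta_i,\gamma_i,q_i$ and not on $\phi$ or the $s_i$, so $\mathcal{C}_3\simeq\mathcal{C}_{3.1}$; in particular $\mathcal{C}_3<\infty$ if and only if $\mathcal{C}_{3.1}<\infty$.

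Finally I would invoke both parts of Theorem~\ref{theo3.3}. The sufficiency follows from part (i): when $\mathcal{C}_{3.1}<\infty$, equivalently $\mathcal{C}_3<\infty$, the operator is bounded with norm $\lesssim\mathcal{C}_3\simeq\mathcal{C}_{3.1}$. For the necessity, the nonnegativity hypothesis on $\phi$ is exactly what makes $\Phi(y)=|y|^n\phi(y)\ge 0$ of constant sign, so part (ii) applies and boundedness forces $\mathcal{C}_3<\infty$ with norm $\gtrsim\mathcal{C}_3\simeq\mathcal{C}_{3.1}$; combining the two estimates gives $\|\mathcal{H}_{\phi,\vec{s}}\|\simeq\mathcal{C}_{3.1}$. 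I expect no genuine analytic obstacle here: the only point needing care is confirming that the three balance conditions on the exponents required by Theorem~\ref{theo3.3} are met, which is immediate since the corollary inherits them verbatim, and then tracking that the $\sqrt{n}$ factors are absorbed into the equivalence constants.
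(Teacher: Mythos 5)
Your proposal is correct and is essentially the paper's own argument: the paper derives this corollary purely by specializing Theorem \ref{theo3.3} to $A_i(y)=s_i(y)I_n$ with $\Phi(y)=|y|^n\phi(y)$, exactly as you do, noting that such diagonal matrices satisfy condition \eqref{eq3.1} and that $\|A_i^{-1}(y)\|=\sqrt{n}\,|s_i(y)|^{-1}$ turns $\mathcal{C}_3$ into a constant multiple of $\mathcal{C}_{3.1}$. Your verification of the Frobenius-norm identities, the constant-sign hypothesis needed for part (ii), and the absorption of the $\sqrt{n}$ factors into the equivalence constants fills in the routine details the paper leaves implicit.
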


In the second part of our paper, we establish some sufficient conditions for the boundedness of the operator ${\mathcal{H}}_{\Phi,\vec{A}}$  on two weighted Morrey, Herz, and Morrey-Herz spaces associated with the class of Muckenhoupt weights. It seems to be difficult to find certain necessary conditions for the boundedness of  the operator ${\mathcal{H}}_{\Phi,\vec{A}}$ on some function spaces with the  Muckenhoupt weights.
\begin{theorem}\label{Morrey-Muckenhoupt}
Let $1\le q^*, \xi, \eta<\infty, -\frac{1}{q_i}<\lambda_i<0$, for all $i=1,...,m$, and $\omega\in A_\xi, v\in A_\eta$ with the finite critical index $r_\omega, r_v$ for the reverse H\"{o}lder condition such that $\omega(B(0, R))\lesssim v(B(0, R))$ for all $R>0$. Assume that $q>q^*\xi r'_\omega, \delta_1\in (1,r_\omega), \delta_2\in (1,r_v)$, and 
 $$\lambda^*=\lambda_1+\cdots+\lambda_m,$$ 
\begin{align*}
\mathcal{C}_4= \int_{\mathbb{R}^n}\frac{|\Phi(y)|}{|y|^n}\prod_{i=1}^m |\det A_i^{-1}(y)|^{\frac{\xi}{q_i}} \|A_i(y) \| ^{\frac{\xi n}{q_i}}\mathcal{A}_{i}(y)dy<\infty,
\end{align*}
where
\begin{align*}
\mathcal{A}_{i}(y)&= \prod_{i=1}^m \left(\|A_i(y)\|^{n \left(\lambda_i+\frac{1}{q_i}\right)\frac{\delta_2-1}{\delta_2}}\chi_{\{y\in\mathbb{R}^n: \|A_i(y) \|\le 1 \} }+ \|A_i(y)\|^{n\eta  \left(\lambda_i+\frac{1}{q_i}\right)}\chi_{\{y\in\mathbb{R}^n: \|A_i(y) \|> 1 \}} \right) \times\\
&\;\;\;\;\;\;\;\;\;\;\times  
\prod_{i=1}^m \left(\|A_i(y)\|^{-\frac{n}{q_i}\frac{\delta_1-1}{\delta_1}}\chi_{\{y\in\mathbb{R}^n: \|A_i(y) \|>1 \} }+ \|A_i(y)\|^{-\frac{\xi n}{q_i}}\chi_{\{y\in\mathbb{R}^n: \|A_i(y) \|\le 1 \}} \right).
\end{align*}
Then, ${\mathcal{H}}_{\Phi,\vec{A}}$ is bounded from $\prod_{i=1}^m\dot{M}^{q_i,\lambda_i}_{v, \omega} (\mathbb{R}^n)$ to $\dot{M}^{q^*,\lambda^*}_{v, \omega} (\mathbb{R}^n)$.
\end{theorem}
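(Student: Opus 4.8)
The plan is to transfer the entire estimate to the \emph{unweighted} setting, where the linear change of variables $z=A_i(y)x$ is harmless, and then to return to the weighted scale by the averaging inequality of Proposition~\ref{prop2.1}. The point of the hypothesis $q>q^*\xi r'_\omega$ is precisely to make this round trip consistent: it guarantees an auxiliary exponent $\sigma\in(1,r_\omega)$ (so that $\omega\in RH_\sigma$) with $\sigma'=q/(q^*\xi)$, and then $q^*\sigma'=q/\xi>1$, which is exactly what the exponent arithmetic below requires. The given $\delta_1,\delta_2$ will enter later, only in the comparison of ball–measures.

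First I would bound $\|\mathcal H_{\Phi,\vec A}(\vec f)\|_{L^{q^*}_\omega(B(0,R))}$. Applying H\"older with the pair $(\sigma,\sigma')$ and the reverse H\"older inequality $\omega\in RH_\sigma$ gives
\[
\|\mathcal H_{\Phi,\vec A}(\vec f)\|_{L^{q^*}_\omega(B(0,R))}\lesssim |B(0,R)|^{-\frac{1}{q^*\sigma'}}\,\omega(B(0,R))^{\frac1{q^*}}\,\|\mathcal H_{\Phi,\vec A}(\vec f)\|_{L^{q^*\sigma'}(B(0,R))},
\]
where the last norm is unweighted. Since $q^*\sigma'=q/\xi>1$, Minkowski's integral inequality pulls the kernel outside, and the unweighted generalized H\"older inequality with exponents $r_i=q_i/\xi$ (legitimate because $\sum_i 1/r_i=\xi/q=1/(q^*\sigma')$) splits the product into factors $\|f_i(A_i(y)\cdot)\|_{L^{r_i}(B(0,R))}$. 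The clean change of variables $z=A_i(y)x$ produces $|\det A_i^{-1}(y)|^{\xi/q_i}$ together with the enclosing ball $B(0,\|A_i(y)\|R)\supset A_i(y)B(0,R)$, and Proposition~\ref{prop2.1} applied to $|f_i|^{q_i/\xi}$ converts the resulting unweighted $L^{q_i/\xi}$ norm back to the weighted one at the cost of $|B(0,\|A_i(y)\|R)|^{\xi/q_i}\,\omega(B(0,\|A_i(y)\|R))^{-1/q_i}$; finally the definition of $\dot M^{q_i,\lambda_i}_{v,\omega}$ contributes $v(B(0,\|A_i(y)\|R))^{\lambda_i+1/q_i}$.

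It then remains to account for the ball–measures. The factor $\omega(B(0,\|A_i(y)\|R))^{-1/q_i}$ is compared to $\omega(B(0,R))^{-1/q_i}$ by Proposition~\ref{prop2.2} (using $\omega\in A_\xi\cap RH_{\delta_1}$), which yields the second product in $\mathcal A_i(y)$: the exponent $-\frac{n}{q_i}\frac{\delta_1-1}{\delta_1}$ in the dilating case $\|A_i(y)\|>1$ and $-\frac{\xi n}{q_i}$ in the contracting case $\|A_i(y)\|\le 1$. Likewise $v(B(0,\|A_i(y)\|R))^{\lambda_i+1/q_i}$, raised to the positive power $\lambda_i+1/q_i>0$ (where $-1/q_i<\lambda_i$ is used), is compared to $v(B(0,R))^{\lambda_i+1/q_i}$ by Proposition~\ref{prop2.2} for $v\in A_\eta\cap RH_{\delta_2}$, producing the first product in $\mathcal A_i(y)$ (exponent $n(\lambda_i+1/q_i)\frac{\delta_2-1}{\delta_2}$ when $\|A_i(y)\|\le1$ and $n\eta(\lambda_i+1/q_i)$ when $\|A_i(y)\|>1$). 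The powers of $R$ cancel exactly, since $|B(0,R)|^{-1/(q^*\sigma')}\simeq R^{-n\xi/q}$ annihilates $\prod_i|B(0,\|A_i(y)\|R)|^{\xi/q_i}\simeq R^{n\xi/q}\prod_i\|A_i(y)\|^{\xi n/q_i}$, leaving precisely the $\|A_i(y)\|^{\xi n/q_i}$ recorded in $\mathcal C_4$. After dividing by $v(B(0,R))^{\lambda^*+1/q^*}$ (with $\lambda^*=\sum_i\lambda_i$ and $1/q=\sum_i 1/q_i$) all $R$–dependence collapses to the single residual factor $\big(\omega(B(0,R))/v(B(0,R))\big)^{1/q^*-1/q}$, which is $\lesssim 1$ because $1/q^*-1/q>0$ and $\omega(B(0,R))\lesssim v(B(0,R))$ by hypothesis. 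Collecting everything gives $\|\mathcal H_{\Phi,\vec A}(\vec f)\|_{\dot M^{q^*,\lambda^*}_{v,\omega}}\lesssim \mathcal C_4\prod_{i=1}^m\|f_i\|_{\dot M^{q_i,\lambda_i}_{v,\omega}}$.

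The main obstacle is exactly the treatment of a \emph{general} Muckenhoupt weight under the map $A_i(y)$: unlike the power weights of Theorems~\ref{theo3.1}--\ref{theo3.3}, here $\omega(A_i^{-1}(y)z)$ is not pointwise comparable to $\omega(z)$, so one cannot transport the weighted norm directly, and any naive attempt via reverse H\"older on $\omega$ raises the power on $f_i$ beyond the $L^{q_i}_\omega$ control supplied by the Morrey space. The device that overcomes this is the detour through unweighted norms followed by Proposition~\ref{prop2.1}, and the only delicate point is the exponent bookkeeping: one must verify that $\sigma'=q/(q^*\xi)$ lies in the admissible range, that $q^*\sigma'>1$ so Minkowski applies, and that $r_i=q_i/\xi$ are compatible with the generalized H\"older inequality. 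All three are guaranteed by $q>q^*\xi r'_\omega$, which is therefore the pivot of the argument.
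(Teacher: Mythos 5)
Your proposal is correct and follows essentially the same route as the paper's proof: Minkowski's inequality, the choice of $r\in(1,r_\omega)$ with $q^*r'=q/\xi$ combined with the reverse H\"older inequality, generalized H\"older with exponents $q_i/\xi$, the change of variables $z=A_i(y)x$, Proposition~\ref{prop2.1} to return to the weighted norm, and Proposition~\ref{prop2.2} for the two ball-comparison cases, with the hypothesis $\omega(B(0,R))\lesssim v(B(0,R))$ and $1/q^*>1/q$ absorbing the residual factor. The only differences are cosmetic: you apply the reverse-H\"older step before Minkowski rather than after, and you package the leftover $R$-dependence as the single factor $\bigl(\omega(B(0,R))/v(B(0,R))\bigr)^{1/q^*-1/q}$ instead of distributing it over the product, which is the same computation.
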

%%%%%%%%%%%%%%%%%%%%%%%%%%%%%%%%%%%%%%%%%
\begin{proof}
By the Minkowski inequality, we have
\begin{align*}
\|{\mathcal{H}}_{\Phi,\vec{A}}(\vec{f}) \|_{L^{q^*}_\omega(B(0,R))}\le \int_{\mathbb{R}^n}\frac{|\Phi(y)|}{|y|^n}\left(\int_{B(0,R)}\prod_{i=1}^m \left|f_i(A_i(y).x)\right|^{q^*}\omega(x)dx \right)^{\frac{1}{q^*}}dy.
\end{align*}
From the condition $q>q^*\xi r'_{\omega}$, there exists $r\in (1,r_\omega)$ such that $\frac{q}{\xi}=q^*.r'$. Applying the reverse H\"{o}lder property and the H\"{o}lder inequality with $\frac{\xi}{q}=\frac{\xi}{q_1}+\cdots+\frac{\xi}{q_m}$, we get
$$
\left(\int_{B(0,R)}\prod_{i=1}^m \left|f_i(A_i(y).x)\right|^{q^*}\omega(x)dx \right)^{\frac{1}{q^*}}\le \left(\int_{B(0,R)}\prod_{i=1}^m \left|f_i(A_i(y).x)\right|^{\frac{q}{\xi}}dx \right)^{\frac{\xi}{q}}.\left(\int_{B(0,R)}\omega(x)^r dx\right)^{\frac{1}{rq^*}}$$
$$\le \left(\int_{B(0,R)}\prod_{i=1}^m \left|f_i(A_i(y).x)\right|^{\frac{q}{\xi}}dx \right)^{\frac{\xi}{q}}|B(0,R)|^{-\frac{\xi}{q}} \omega(B(0,R))^{\frac{1}{q^*}}$$
$$\le \prod_{i=1}^m \left(\int_{B(0,R)} \left|f_i(A_i(y).x)\right|^{\frac{q_i}{\xi}}dx \right)^{\frac{\xi}{q_i}}|B(0,R)|^{-\frac{\xi}{q}} \omega(B(0,R))^{\frac{1}{q^*}}.
$$
By the change of variable $z=A_i(y)x$, it follows that
\begin{align*}
&\|{\mathcal{H}}_{\Phi,\vec{A}}(\vec{f}) \|_{L^{q^*}_\omega(B(0,R))}\\
&\le\int_{\mathbb{R}^n}\frac{|\Phi(y)|}{|y|^n}.\prod_{i=1}^m|\det A^{-1}(y)|^{\frac{\xi}{q_i}}\left(\int_{A_i(y).B(0,R)} \left|f_i(z)\right|^{\frac{q_i}{\xi}}dz \right)^{\frac{\xi}{q_i}}\omega(B(0,R))^{\frac{1}{q^*}}|B(0,R)|^{-\frac{\xi}{q}}dy\\
&\le\omega(B(0,R))^{\frac{1}{q^*}}|B(0,R)|^{-\frac{\xi}{q}}\int_{\mathbb{R}^n}\frac{|\Phi(y)|}{|y|^n}\prod_{i=1}^m|\det A^{-1}(y)|^{\frac{\xi}{q_i}}\|f_i\|_{L^{\frac{q_i}{\xi}}(B(0,\|A_i(y)\|.R))}dy.
\end{align*}
According to Proposition \ref{prop2.1}, one has
\begin{align*}
\|f_i\|_{L^{\frac{q_i}{\xi}}(B(0,\|A_i(y)\|.R))}
&\lesssim |B(0,\|A_i(y)\|.R)|^{\frac{\xi}{q_i}}\left(\frac{1}{\omega(B(0,\|A_i(y)\|.R)}\int_{B(0,\|A_i(y)\|.R)} |f_i|^{q_i}\omega(x)dx \right)^{\frac{1}{q_i}}\\
&=|B(0,\|A_i(y)\|.R)|^{\frac{\xi}{q_i}}\omega(B(0,\|A_i(y)\|.R))^{-\frac{1}{q_i}}\|f_i\|_{L_\omega^{q_i}(B(0,\|A_i(y)\|.R))}.
\end{align*}
Therefore, we obtain
\begin{align*}
&\|{\mathcal{H}}_{\Phi,\vec{A}}(\vec{f}) \|_{L^{q^*}_\omega(B(0,R))}\\
&\lesssim\omega(B(0,R))^{\frac{1}{q^*}}|B(0,R)|^{-\frac{\xi}{q}}\int_{\mathbb{R}^n}\frac{|\Phi(y)|}{|y|^n}.\left(\prod_{i=1}^m|\det A_i^{-1}(y)|^{\frac{\xi}{q_i}}.|B(0,\|A_i(y)\|.R)|^{\frac{\xi}{q_i}}\right.\times\\
&\;\;\;\;\;\;\;\;\;\;\;\;\;\;\;\;\;\;\;\;\;\;\;\;\;\;\;\;\;\;\;\;\;\;\;\;\;\;\;\;\;\;\;\; \left. \times\;\omega(B(0,\|A_i(y)\|.R))^{-\frac{1}{q_i}} \|f_i\|_{L_\omega^{q_i}(B(0,\|A_i(y)\|.R))}\right) dy\\
&\lesssim \int_{\mathbb{R}^n}\frac{|\Phi(y)|}{|y|^n}.\prod_{i=1}^m \left( |\det A_i^{-1}(y)|^{\frac{\xi}{q_i}}. \|A_i(y) \| ^{\frac{\xi  n}{q_i}}\frac{1}{\omega(B(0,\|A_i(y)\|.R))^{\frac{1}{q_i}}}   \right)\times\\
&\;\;\;\;\;\;\;\;\;\;\;\;\;\;\;\;\;\;\;\;\;\;\;\;\;\;\;\;\;\;\;\;\times\left(\omega(B(0,R))^{\frac{1}{q^*}}\prod_{i=1}^m \|f_i\|_{L_\omega^{q_i}(B(0,\|A_i(y)\|.R))}\right)dy.
\end{align*}
Consequently,
\begin{align*}
&\|{\mathcal{H}}_{\Phi,\vec{A}}(\vec{f})\|_{\dot{B}^{q^*,\lambda^*}_{v,\omega} (\mathbb{R}^n)}= \sup\limits_{R>0}\frac{1}{v(B(0,R))^{\lambda^*+\frac{1}{q^*}}}\|{\mathcal{H}}_{\Phi,\vec{A}}(\vec{f}) \|_{L^{q^*}_\omega(B(0,R))}\\
&\lesssim \sup\limits_{R>0}\frac{1}{v(B(0,R))^{\lambda^*+\frac{1}{q^*}}}\int_{\mathbb{R}^n}\frac{|\Phi(y)|}{|y|^n}\prod_{i=1}^m \left( |\det A_i^{-1}(y)|^{\frac{\xi}{q_i}} \|A_i(y) \| ^{\frac{\xi  n}{q_i}}\frac{1}{\omega(B(0,\|A_i(y)\|R))^{\frac{1}{q_i}}}   \right)\times\\
&\;\;\;\;\;\;\;\;\;\;\;\;\;\;\;\;\;\;\;\;\;\;\;\;\;\;\;\;\;\;\;\;\;\;\;\;\;\;\;\;\;\;\;\;\;\;\;\;\;\;\;\times\left(\omega(B(0,R))^{\frac{1}{q^*}}
 \prod_{i=1}^m \|f_i\|_{L_\omega^{q_i}(B(0,\|A_i(y)\|.R))}\right)dy\\
&\lesssim \sup\limits_{R>0}\int_{\mathbb{R}^n}\frac{|\Phi(y)|}{|y|^n}\prod_{i=1}^m |\det A_i^{-1}(y)|^{\frac{\xi}{q_i}} \|A_i(y) \| ^{\frac{\xi  n}{q_i}} \mathcal{A}_{i}(R).\prod_{i=1}^m \|f_i\|_{\dot{M}^{q_i,\lambda_i}_{v,\omega} (\mathbb{R}^n)}dy,
\end{align*}
where
\begin{align*}
\mathcal{A}_{i}(R)=\left(\frac{\omega(B(0,R))^{\frac{1}{q^*}}}{v(B(0,R))^{\lambda^*+\frac{1}{q^*}}}\right)\left(\prod_{i=1}^m\frac{v(B(0,\|A_i(y)\|.R))^{\lambda_i+\frac{1}{q_i}}}{\omega(B(0,\|A_i(y)\|.R))^{\frac{1}{q_i}}}\right).
\end{align*}
Observe that $\frac{1}{q^*}>\frac{1}{q}$, then the condition $\omega(B(0, R))\lesssim v(B(0, R))$ implies that
 $$\left(\frac{\omega(B(0, R))}{v(B(0, R))}\right)^{\frac{1}{q^*}}\lesssim \left(\frac{\omega(B(0, R))}{v(B(0, R))}\right)^{\frac{1}{q}}.$$
Hence, by the conditions $\lambda^*=\sum_{i=1}^m\lambda_i, \frac{1}{q}=\sum_{i=1}^m\frac{1}{q_i}$, we have 
\begin{align*}
\mathcal{A}_{i}(R)
&\lesssim \prod_{i=1}^m \left(\frac{v(B(0,\|A_i(y)\|.R))}{\omega(B(0,\|A_i(y)\|.R))} \right)^{\frac{1}{q_i}} \prod_{i=1}^m \left(\frac{v(B(0,\|A_i(y)\|.R))}{v(B(0,R))}  \right)^{\lambda_i} \prod_{i=1}^m \left(\frac{\omega(B(0,R))}{v(B(0,R))} \right)^{\frac{1}{q_i}}\\
&= \prod_{i=1}^m  \left(\frac{v(B(0,\|A_i(y)\|.R))}{v(B(0,R))} \right)^{\lambda_i+\frac{1}{q_i}} \prod_{i=1}^m \left(\frac{\omega(B(0,R))}{\omega(B(0,\|A_i(y)\|.R))} \right)^{\frac{1}{q_i}}.
\end{align*}
Now, using the assumptions $\omega\in A_\xi, v\in A_\eta, \delta_1\in (1,r_\omega), \delta_2\in (1,r_v), \frac{1}{q_i}+\lambda_i>0$, and by Proposition \ref{prop2.2}, we consider the following two cases.
\vskip 5pt
{Case 1:} $0<\|A_i(y)\|\le 1$. Then, we have
$$
\left(\frac{v(B(0,\|A_i(y)\|.R))}{v(B(0,R)} \right)^{\lambda_i+\frac{1}{q_i}}\lesssim \left(\frac{|B(0,\|A_i(y)\|.R)|}{|B(0,R)|} \right)^{\left(\lambda_i+\frac{1}{q_i}\right)\frac{\delta_2-1}{\delta_2}}=\|A_i(y)\|^{n \left(\lambda_i+\frac{1}{q_i}\right)\frac{\delta_2-1}{\delta_2}},$$
$$
\left(\frac{\omega(B(0,R))}{\omega(B(0,\|A_i(y)\|.R))} \right)^{\frac{1}{q_i}} \lesssim \left(\frac{|B(0,R)|}{|B(0,\|A_i(y)\|.R)|} \right)^{\frac{\xi}{q_i}}=\|A_i(y)\|^{-\frac{n\xi}{q_i}}.
$$

Case 2. $\|A_i(y)\|> 1$. We also get
$$
\left(\frac{v(B(0,\|A_i(y)\|.R))}{v(B(0,R)} \right)^{\lambda_i+\frac{1}{q_i}}\lesssim \left(\frac{|B(0,\|A_i(y)\|.R)|}{|B(0,R)|} \right)^{\left(\lambda_i+\frac{1}{q_i}\right)\eta}=\|A_i(y)\|^{n\eta \left(\lambda_i+\frac{1}{q_i}\right)}$$
$$
\left(\frac{\omega(B(0,R))}{\omega(B(0,\|A_i(y)\|.R))} \right)^{\frac{1}{q_i}} \lesssim \left(\frac{|B(0,R)|}{|B(0,\|A_i(y)\|.R)|} \right)^{\frac{1 }{q_i} \left(\frac{\delta_1-1}{\delta_1}\right)}=\|A_i(y)\|^{-\frac{n}{q_i}\left(\frac{\delta_1-1}{\delta_1}\right)}.
$$
So, we obtain
\begin{align*}
\mathcal{A}_{i}(R)&\lesssim \prod_{i=1}^m \left(\|A_i(y)\|^{n \left(\lambda_i+\frac{1}{q_i}\right)\frac{\delta_2-1}{\delta_2}}\chi_{\{y\in\mathbb{R}^n: \|A_i(y) \|\le 1 \} }+ \|A_i(y)\|^{n\eta  \left(\lambda_i+\frac{1}{q_i}\right)}\chi_{\{y\in\mathbb{R}^n: \|A_i(y) \|> 1 \}} \right) \times\\
&\times  
\prod_{i=1}^m \left(\|A_i(y)\|^{-\frac{n}{q_i}\frac{\delta_1-1}{\delta_1}}\chi_{\{y\in\mathbb{R}^n: \|A_i(y) \|> 1 \} }+ \|A_i(y)\|^{-\frac{\xi n}{q_i}}\chi_{\{y\in\mathbb{R}^n: \|A_i(y) \|\le 1 \}} \right).
\end{align*}
Thus, we obtain
\begin{align*}
\|{\mathcal{H}}_{\Phi,\vec{A}}(\vec{f})\|_{\dot{M}^{q^*,\lambda^*}_{v,\omega} (\mathbb{R}^n)}\lesssim \left(\int_{\mathbb{R}^n}\frac{|\Phi(y)|}{|y|^n}\prod_{i=1}^m |\det A_i^{-1}(y)|^{\frac{\xi}{q_i}} \|A_i(y) \| ^{\frac{\xi n}{q_i}}\mathcal{A}_{i}(y)dy\right)\prod_{i=1}^m \|f_i\|_{\dot{M}^{q_i,\lambda_i}_{v,\omega} (\mathbb{R}^n)}.
\end{align*}
Therefore, Theorem \ref{Morrey-Muckenhoupt} is completely proved.
\end{proof}
\vskip 5pt
Finally, we also obtain the following useful result concerning the boundedness of ${\mathcal{H}}_{\Phi,\vec{A}}$ on the two weighted Morrey-Herz spaces associated with the Muckenhoupt weights.
%%%%%%%%%%%%%%%%%%%%%%%%%%%%%%%%%%%%%%%%%%%%%%%%%%%%%%%%%%%%%%%%%%%%%%%%%%%%%%%%%%%%%%%%%%%%%%%%%%%%%%%%%%%%%%%%%%%%%%%%%%%%%%%%%%%%%%%%%%%%%%%%%%%%%%%%%%%%%%%%%%%%%%%%%%%%
\begin{theorem}\label{Morrey-Herz-Muckenhoupt}
Let $1\le q^*, \xi, \eta<\infty, \alpha_i<0$, $\lambda_i\geq 0$, for all $i=1,...,m$, and $\omega\in A_\xi, v\in A_\eta$ with the finite critical index $r_\omega, r_v$ for the reverse H\"{o}lder condition such that $\omega(B_k)\lesssim v(B_k)$, for every $k\in\mathbb{Z}$. Assume that $q>\textit{max }\{mq^*, q^*\xi r'_\omega\}, \delta_1\in (1,r_\omega), \delta_2\in (1,r_v)$ and $\alpha^*, \lambda^* $ are two real numbers such that
$$\lambda^*=\lambda_1+\cdots+\lambda_m \textit{ and  } \;\frac{1}{m}\left(\frac{\alpha^*}{n} +\frac{1}{q^*}\right)=\frac{\alpha_i}{n} +\frac{1}{q_i},\; i=1,...,m.$$
If $\frac{\alpha^*}{n}+ \frac{1}{q^*}\le 0$ and
\begin{align*}
\mathcal{C}_{5.1} = \prod_{i=1}^m \left(\int_{\mathbb{R}^n}\frac{|\Phi(y)|}{|y|^n}   |\det A_i^{-1}(y)|^{\frac{m\xi}{q_i}}\|A_i(y) \| ^{\frac{m\xi n}{q_i}}.\mathcal{B}_{1i}(y)dy\right)^{\frac{1}{m}}<\infty,
\end{align*}
where
\begin{align*}
\mathcal{B}_{1i}(y)&=\|A_i(y) \|^{-\left(\left(\frac{n}{q^*}+\alpha^*\right)\frac{\delta_1-1}{\delta_1}+(\alpha^*-m\lambda_i)\frac{\delta_2-1}{\delta_2}-\xi\alpha^*\right)}\chi_{ \{y\in \mathbb{R}^n:\|A_i(y)\|<1\}} +\\
&\;\;\;\;+\|A_i(y) \|^{-\left(\left(\frac{n}{q^*}+\alpha^*\right)\xi+\eta(\alpha^*-m\lambda_i)-\alpha^*\frac{\delta_1-1}{\delta_1}\right)}\chi_{ \{y\in \mathbb{R}^n:\|A_i(y)\|\ge 1\}},
\end{align*}
or $\frac{\alpha^*}{n}+ \frac{1}{q^*} > 0$ and 
\begin{align*}
\mathcal{C}_{5.2} =  \prod_{i=1}^m \left(\int_{\mathbb{R}^n}\frac{|\Phi(y)|}{|y|^n}   |\det A_i^{-1}(y)|^{\frac{m\xi}{q_i}}\|A_i(y) \| ^{\frac{m\xi n}{q_i}}.\mathcal{B}_{2i}(y)dy\right)^{\frac{1}{m}}<\infty,
\end{align*}
where
\begin{align*}
\mathcal{B}_{2i}(y)&=\|A_i(y) \|^{-\left(\frac{\xi n}{q^*}+(\alpha^*-m\lambda_i)\frac{\delta_2-1}{\delta_2}\right)}\chi_{ \{y\in \mathbb{R}^n:\|A_i(y)\|<1\}}\; +\\
&\;\;\;\;+\|A_i(y) \|^{-\left(\frac{n}{q^*}\frac{\delta_1-1}{\delta_1}+\eta(\alpha^*-m\lambda_i)\right)}\chi_{ \{y\in \mathbb{R}^n:\|A_i(y)\|\ge 1\}},
\end{align*}
then we have ${\mathcal{H}}_{\Phi,\vec{A}}$ is bounded from $\prod_{i=1}^m  M\dot{K}_{v,\omega}^{\alpha_i, \lambda_i, p_i,q_i}(\mathbb{R}^n)$ to $M\dot{K}_{v,\omega}^{\alpha^*,\lambda^*, p,q^*}(\mathbb{R}^n)$.
\end{theorem}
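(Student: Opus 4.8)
The plan is to merge the annular, Herz-type decomposition used in the proof of Theorem \ref{theo3.3} with the Muckenhoupt machinery --- reverse H\"older, H\"older and Propositions \ref{prop2.1}--\ref{prop2.2} --- developed in the proof of Theorem \ref{Morrey-Muckenhoupt}. Since all spaces here carry the single pair of weights $v,\omega$, I first reduce to one annulus. By Minkowski's inequality I pull the $y$-integral outside the $L^{q^*}_\omega(C_k)$ norm of $\mathcal{H}_{\Phi,\vec{A}}(\vec{f})$. For fixed $y$ and $k$, the hypothesis $q>q^*\xi r'_\omega$ lets me choose $r\in(1,r_\omega)$ with $q/\xi=q^*r'$; applying the reverse H\"older property of $\omega$ on $B_k\supset C_k$ together with H\"older's inequality (exponents arranged via $\xi/q=\sum_i \xi/q_i$) separates the weight from the $f_i$, producing the factor $|B_k|^{-\xi/q}\omega(B_k)^{1/q^*}$ and the unweighted norms $\|f_i(A_i(y)\cdot)\|_{L^{q_i/\xi}(C_k)}$.

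Next I linearize these inner factors. A change of variables $z=A_i(y)x$ contributes $|\det A_i^{-1}(y)|^{\xi/q_i}$ and, through the containment $A_i(y)C_k\subset\{\,2^{k+\ell_i-2+\kappa^*}<|z|\le 2^{k+\ell_i}\,\}$ obtained exactly as in \eqref{eq3.5.1} (with $2^{\ell_i-1}<\|A_i(y)\|\le 2^{\ell_i}$ and $|\kappa^*(y)|\simeq1$ by \eqref{eq3.1}), splits the unweighted norm into the finite sum $\sum_{r=\kappa^*-1}^{0}\|f_i\chi_{k+\ell_i+r}\|_{L^{q_i/\xi}}$; here I use that $q_i/\xi>1$, a consequence of $q_i\ge q>q^*\xi r'_\omega\ge\xi$, so that the disjoint annuli add subadditively. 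Proposition \ref{prop2.1} applied on each ball $B_{k+\ell_i+r}$ then converts each unweighted piece back to a weighted one, $\|f_i\chi_{k+\ell_i+r}\|_{L^{q_i/\xi}}\lesssim |B_{k+\ell_i+r}|^{\xi/q_i}\omega(B_{k+\ell_i+r})^{-1/q_i}\|f_i\chi_{k+\ell_i+r}\|_{L^{q_i}_\omega}$.

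I then insert these estimates into the Morrey-Herz norm $\sup_{k_0}v(B_{k_0})^{-\lambda^*/n}\big(\sum_{k\le k_0}v(B_k)^{\alpha^*p/n}\|\mathcal{H}_{\Phi,\vec{A}}(\vec{f})\chi_k\|_{L^{q^*}_\omega}^p\big)^{1/p}$, use the integral Minkowski inequality to move $\int dy$ outside, and factor the resulting $\ell^p$-sum through H\"older's inequality in $k$ (with $\sum_i 1/p_i=1/p$) and the trivial bound on the finite $r$-sum (recall $|\kappa^*|\simeq1$). Re-indexing $k\mapsto k+\ell_i+r$ turns the sum into $\prod_i\|f_i\|_{M\dot{K}^{\alpha_i,\lambda_i,p_i,q_i}_{v,\omega}}$ at the cost of the weight ratios $v(B_k)/v(B_{k+\ell_i+r})$, $v(B_{k_0})/v(B_{k_0+\ell_i+r})$ and $\omega(B_k)/\omega(B_{k+\ell_i+r})$, which together with $|B_k|/|B_{k+\ell_i+r}|$ and $2^{\ell_i}\simeq\|A_i(y)\|$ are controlled by Proposition \ref{prop2.2} in the two regimes $\|A_i(y)\|<1$ and $\|A_i(y)\|\ge1$. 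Finally, a H\"older inequality in $y$ with all exponents equal to $m$, writing $|\Phi(y)|/|y|^n=\prod_i(|\Phi(y)|/|y|^n)^{1/m}$, factors the $y$-integral into the product form defining $\mathcal{C}_{5.1}$ or $\mathcal{C}_{5.2}$.

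The decisive and most delicate point is the exponent accounting in the last two stages, which I expect to be the \emph{main obstacle}. Each factor $\|A_i(y)\|$ enters through four independent channels: the Jacobian $|\det A_i^{-1}(y)|^{\xi/q_i}$, the dilation of Lebesgue measure $\|A_i(y)\|^{n\xi/q_i}$, the shift $2^{\ell_i}\simeq\|A_i(y)\|$ in the re-indexed sums carrying the exponents $\alpha^*$ and $\lambda^*$ (hence the combinations $\alpha^*-m\lambda_i$), and the weight comparisons from Proposition \ref{prop2.2}, which are two-sided and therefore split according to whether $\|A_i(y)\|<1$ or $\|A_i(y)\|\ge1$. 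Assembling these so that the total exponent of $\|A_i(y)\|$ matches $\mathcal{B}_{1i}$ or $\mathcal{B}_{2i}$ hinges on the sign of $\frac{\alpha^*}{n}+\frac{1}{q^*}$: this sign decides, in each regime, which of the two-sided bounds of Proposition \ref{prop2.2} is the effective one, and it is precisely the dichotomy recorded in the statement. The remaining hypotheses --- $q>mq^*$ for admissibility of the H\"older exponent $m$ in $y$, and $\alpha_i<0$, $\lambda_i\ge0$ for convergence of the geometric series in $k$ and in $r$ --- are what guarantee finiteness throughout.
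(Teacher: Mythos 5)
Your reduction to finitely many annuli rests on the containment $A_i(y)C_k\subset\{2^{k+\ell_i-2+\kappa^*}<|z|\le 2^{k+\ell_i}\}$ with $|\kappa^*(y)|\simeq 1$, which you justify ``by \eqref{eq3.1}''. That is precisely the gap: condition \eqref{eq3.1} is not among the hypotheses of Theorem \ref{Morrey-Herz-Muckenhoupt}. The paper imposes $\rho_{\vec A}<\infty$ only for the first part of Section 3 (the power-weight Theorems \ref{theo3.1}--\ref{theo3.3}); in the Muckenhoupt part neither the statements nor the proofs of Theorems \ref{Morrey-Muckenhoupt} and \ref{Morrey-Herz-Muckenhoupt} ever use it --- note that $\mathcal{C}_{5.1}$ and $\mathcal{C}_{5.2}$ involve only $\|A_i(y)\|$ and $|\det A_i^{-1}(y)|$, never $\|A_i^{-1}(y)\|$. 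Without \eqref{eq3.1}, the best one can say is $A_i(y)C_k\subset\{\|A_i^{-1}(y)\|^{-1}2^{k-1}<|z|\le\|A_i(y)\|2^k\}$, a shell spanning roughly $\log_2\big(\|A_i(y)\|\,\|A_i^{-1}(y)\|\big)$ dyadic annuli --- a number unbounded in $y$ --- so your finite sum over $r$ (and the ``trivial bound on the finite $r$-sum'') collapses, and the error factors it would generate involve $\|A_i^{-1}(y)\|$, which the assumed finiteness of $\mathcal{C}_{5.1}$ or $\mathcal{C}_{5.2}$ cannot absorb.

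The paper's proof takes the route that works without \eqref{eq3.1}: it enlarges $A_i(y)C_k$ to the full ball $B(0,\|A_i(y)\|2^k)$, applies Proposition \ref{prop2.1} on that ball, and only afterwards decomposes the ball into the \emph{infinite} family of annuli $C_{k+\ell}$, $\ell\le j$, where $2^{j-1}\le\|A_i(y)\|<2^j$. The price is an infinite sum, and this is exactly where the hypothesis $\alpha_i<0$ earns its keep: by Proposition \ref{prop2.2}, $\big(v(B_{k+j})/v(B_{k+\ell})\big)^{\alpha_i/n}\lesssim 2^{(j-\ell)\alpha_i\frac{\delta_2-1}{\delta_2}}$ (this is \eqref{eq3.8}), and $\sum_{\ell=-\infty}^{j}2^{(j-\ell)\alpha_i\frac{\delta_2-1}{\delta_2}}$ converges precisely because $\alpha_i<0$. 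In your plan $\alpha_i<0$ has no genuine role (all your sums in $r$ are finite), which is a symptom of the missing step. A second, smaller misattribution: $q>mq^*$ is not needed to license the H\"{o}lder inequality in $y$ with equal exponents $m$; the paper uses it, together with $\omega(B_k)\lesssim v(B_k)$, to obtain $\frac{\alpha^*}{mn}<\frac{\alpha_i}{n}$ and hence $\big(\omega(B_{k+j})/v(B_{k+j})\big)^{\alpha_i/n}\lesssim\big(\omega(B_{k+j})/v(B_{k+j})\big)^{\alpha^*/(mn)}$, the step that aligns the mismatched exponents in $\mathcal{D}_k^j$ before Proposition \ref{prop2.2} can be applied at all. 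If \eqref{eq3.1} were added to the hypotheses, your finite-annulus argument would go through and would reproduce $\mathcal{B}_{1i},\mathcal{B}_{2i}$ (indeed more simply than the paper), but for the theorem as stated the proof has a genuine hole.
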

\begin{proof}
By the same arguments as Theorem \ref{Morrey-Muckenhoupt}, we also get
\begin{align*}
\|{\mathcal{H}}_{\Phi,\vec{A}}(\vec{f})\chi_k \|_{L^{q^*}_\omega(\mathbb{R}^n)}\le \int_{\mathbb{R}^n}\frac{|\Phi(y)|}{|y|^n}\left(\int_{C_k}\prod_{i=1}^m \left|f_i(A_i(y).x)\right|^{q^*}\omega(x)dx \right)^{\frac{1}{q^*}}dy,
\end{align*}
and 
\begin{align*}
\left(\int_{C_k}\prod_{i=1}^m \left|f_i(A_i(y).x)\right|^{q^*}\omega(x)dx \right)^{\frac{1}{q^*}}\lesssim \left(\int_{C_k}\prod_{i=1}^m \left|f_i(A_i(y).x)\right|^{\frac{q}{\xi}}dx \right)^{\frac{\xi}{q}}|B_k|^{-\frac{\xi}{q}}\omega(B_k)^{\frac{1}{q^*}}.
\end{align*}
Combining the H\"{o}lder inequality again with variable transformation $z=A_i(y)x$, we have
\begin{align*}
\|{\mathcal{H}}_{\Phi,\vec{A}}(\vec{f})\chi_k \|_{L^{q^*}_\omega(\mathbb{R}^n)}
&\lesssim  |B_k|^{-\frac{\xi}{q}}\omega(B_k)^{\frac{1}{q^*}}\int_{\mathbb{R}^n}\frac{|\Phi(y)|}{|y|^n}\prod_{i=1}^m|\det A_i^{-1}(y)|^{\frac{\xi}{q_i}}\left(\int_{A_i(y).C_k} \left|f_i(z)\right|^{\frac{q_i}{\xi}}dz \right)^{\frac{\xi}{q_i}}dy\\
&\lesssim  |B_k|^{-\frac{\xi}{q}}\omega(B_k)^{\frac{1}{q^*}}\int_{\mathbb{R}^n}\frac{|\Phi(y)|}{|y|^n}\prod_{i=1}^m|\det A_i^{-1}(y)|^{\frac{\xi}{q_i}}\|f_i\|_{L^{\frac{q_i}{\xi}}(B(0,\|A_i(y)\|.2^k))}dy.
\end{align*}
It follows from Proposition \ref{prop2.1} that
\begin{align*}
\|f_i\|_{L^{\frac{q_i}{\xi}}(B(0,\|A_i(y)\|.2^k))}
&\lesssim|B(0,\|A_i(y)\|.2^k)|^{\frac{\xi}{q_i}}\omega(B(0,\|A_i(y)\|.2^k))^{-\frac{1}{q_i}}\|f_i\|_{L_\omega^{q_i}(B(0,\|A_i(y)\|.2^k))}.
\end{align*}
Hence,  we obtain
\begin{align}\label{eq3.7}
&\|{\mathcal{H}}_{\Phi,\vec{A}}(\vec{f})\chi_k \|_{L^{q^*}_\omega(\mathbb{R}^n)}
\lesssim  |B_k|^{-\frac{\xi}{q}}.\omega(B_k)^{\frac{1}{q^*}}\int_{\mathbb{R}^n}\frac{|\Phi(y)|}{|y|^n}\left(\prod_{i=1}^m|\det A_i^{-1}(y)|^{\frac{\xi}{q_i}}|B(0,\|A_i(y)\|.2^k)|^{\frac{\xi}{q_i}}\right.\times\notag\\
&\;\;\;\;\;\;\left.\times \;\omega(B(0,\|A_i(y)\|.2^k))^{-\frac{1}{q_i}}\|f_i\|_{L_\omega^{q_i}(B(0,\|A_i(y)\|.2^k))}\right)dy\notag\\
&\lesssim \int_{\mathbb{R}^n}\frac{|\Phi(y)|}{|y|^n}\prod_{i=1}^m \left( |\det A_i^{-1}(y)|^{\frac{\xi}{q_i}} \|A_i(y) \| ^{\frac{\xi n}{q_i}}\frac{\omega(B_k)^{\frac{1}{mq^*}}}{\omega(B(0,\|A_i(y)\|.2^k))^{\frac{1}{q_i}}}\|f_i\|_{L_\omega^{q_i}(B(0,\|A_i(y)\|.2^k))}   \right)dy\notag\\
\end{align}
Thus, by $\lambda^*=\lambda_1+\cdots+\lambda_m$, 
the Minkowski and the H\"{o}lder inequalities, we have
\begin{align*}
&\|{\mathcal{H}}_{\Phi,\vec{A}}(\vec{f})\|_{M\dot{K}_{v,\omega}^{\alpha^*,\lambda^*,  p,q^*}(\mathbb{R}^n)}
\lesssim \int_{\mathbb{R}^n}\frac{|\Phi(y)|}{|y|^n}\prod_{i=1}^m \left( |\det A_i^{-1}(y)|^{\frac{\xi}{q_i}}\|A_i(y) \| ^{\frac{\xi n}{q_i}}\right) \times\\
&\times \sup_{k_0\in\mathbb Z}v(B_{k_0})^{-\frac{\lambda^*}{n}}
 \left(\sum\limits_{k=-\infty}^{k_0} \prod_{i=1}^m \frac{v(B_k)^{\frac{\alpha^* p}{mn}}\omega(B_k)^{\frac{p}{mq^*}}}{\omega(B(0,\|A_i(y)\|.2^k))^{\frac{p}{q_i}}}                     \|f_i\|^p_{L_\omega^{q_i}(B(0,\|A_i(y)\|.2^k))} \right)^{\frac{1}{p}}dy\\
&\lesssim \int_{\mathbb{R}^n}\frac{|\Phi(y)|}{|y|^n}\prod_{i=1}^m \left( |\det A_i^{-1}(y)|^{\frac{\xi}{q_i}}\|A_i(y) \| ^{\frac{\xi n}{q_i}}\right) \times\\
&\;\;\;\;\;\times \prod_{i=1}^m \sup_{k_0\in\mathbb Z}v(B_{k_0})^{-\frac{\lambda_i}{n}}
\left(\sum\limits_{k=-\infty}^{k_0} \frac{v(B_k)^{\frac{\alpha^* p_i}{mn}  }\omega(B_k)^{\frac{p_i}{mq^*}}}{\omega(B(0,\|A_i(y)\|.2^k))^{\frac{p_i}{q_i}}} \|f_i\|^{p_i}_{L_\omega^{q_i}(B(0,\|A_i(y)\|.2^k))} \right)^{\frac{1}{p_i}}.
\end{align*}
Consequently, 
\begin{align*}
\|{\mathcal{H}}_{\Phi,\vec{A}}(\vec{f})\|_{M\dot{K}_{v,\omega}^{m\alpha^*,\lambda^* p,q^*}(\mathbb{R}^n)}
&\lesssim \int_{\mathbb{R}^n}\frac{|\Phi(y)|}{|y|^n}\prod_{i=1}^m \left( |\det A_i^{-1}(y)|^{\frac{\xi}{q_i}}\|A_i(y) \| ^{\frac{\xi n}{q_i}}\mathcal{C}_{1i}(y) \right)dy,
\end{align*}
where
\begin{align*}
\mathcal{C}_{1i}(y):=\sup_{k_0\in\mathbb Z}v(B_{k_0})^{-\frac{\lambda_i}{n}}\left(\sum\limits_{k=-\infty}^{k_0}\left( \frac{v(B_k)^{\frac{\alpha^* }{mn}   }\omega(B_k)^{\frac{1}{mq^*}}}{\omega(B(0,\|A_i(y)\|.2^k))^{\frac{1}{q_i}}}  \|f_i\|_{L_\omega^{q_i}(B(0,\|A_i(y)\|.2^k))}\right)^{p_i} \right)^{\frac{1}{p_i}}.
\end{align*}
It follows readily from the H\"{o}lder inequality for $\frac{1}{m}+\cdots+\frac{1}{m}=1$ that 
\begin{align*}
\|{\mathcal{H}}_{\Phi,\vec{A}}(\vec{f})\|_{M\dot{K}_{v,\omega}^{\alpha^*, \lambda^*, p,q^*}(\mathbb{R}^n)}
&\lesssim  \prod_{i=1}^m  \left(  \int_{\mathbb{R}^n}\frac{|\Phi(y)|}{|y|^n}   |\det A_i^{-1}(y)|^{\frac{m\xi}{q_i}}\|A_i(y) \| ^{\frac{m\xi n}{q_i}}   \mathcal{C}_{1i}(y)^mdy     \right)^{\frac{1}{m}}\\
&:=\prod_{i=1}^m \mathcal{E}_{1i}^{\frac{1}{m}}.
\end{align*}
Fix $i\in\{1,...,m\}$. Since $\|A_i(y)\|\not = 0$, there is $j=j(i,y)$ such that $2^{j-1}\leq\|A_i(y)\|<2^j$. Thus, $B_{k+j-1}\subseteq B(0, \|A_i(y)\|.2^k)\subset B_{k+j}$. It implies that $\omega(B_{k+j-1})\le    \omega(B(0,\|A_i(y)\|.2^k) )$ and 
$B(0, \|A_i(y)\|.2^k)\subset\cup_{\ell=-\infty}^j C_{k+\ell}$. Combining these and using the  inequality $\left(\sum|a_\ell|\right)^\theta\leq \sum|a_\ell|^\theta$ for all $0<\theta\leq1$, it yields
\begin{align*}
\mathcal{C}_{1i}(y)
&\le\sup_{k_0\in\mathbb Z}v(B_{k_0})^{-\frac{\lambda_i}{n}}\left(\sum\limits_{k=-\infty}^{k_0}\left( \frac{v(B_k)^{\frac{\alpha^* }{mn}   }\omega(B_k)^{\frac{1}{mq^*}}}{\omega(B(0,\|A_i(y)\|.2^k))^{\frac{1}{q_i}}}  \|f_i\|_{L_\omega^{q_i}(B_{k+j})}\right)^{p_i} \right)^{\frac{1}{p_i}}.\\
&\le \sup_{k_0\in\mathbb Z}v(B_{k_0})^{-\frac{\lambda_i}{n}}\left(\sum\limits_{k=-\infty}^{k_0}  \left(  \frac{v(B_k)^{\frac{\alpha^* }{mn}   }.\omega(B_k)^{\frac{1}{mq^*}}}{\omega(B(0,\|A_i(y)\|.2^k))^{\frac{1}{q_i}}.v(B_{k+j})^{\frac{\alpha_i}{n} }}\right.\right.\times\\
&\left.\left.\;\;\;\;\;\;\;\;\;\;\;\;\;\;\;\;\;\;\;\times \sum\limits_{\ell=-\infty}^j \left(\frac{v(B_{k+j})}{v(B_{k+\ell})}  \right)^{\frac{\alpha_i}{n} }        v(B_{k+\ell})^{\frac{\alpha_i}{n} }\|f_i\|_{L_\omega^{q_i}(C_{k+\ell})}   \right)^{p_i} \right)^{\frac{1}{p_i}}\\
&\le\sup_{k_0\in\mathbb Z}v(B_{k_0})^{-\frac{\lambda_i}{n}}\left(\sum\limits_{k=-\infty}^{k_0}  \left( \mathcal{D}_k^j \sum\limits_{\ell=-\infty}^j \left(\frac{v(B_{k+j})}{v(B_{k+\ell})}  \right)^{\frac{\alpha_i}{n} }        v(B_{k+\ell})^{\frac{\alpha_i}{n} }\|f_i\|_{L_\omega^{q_i}(C_{k+\ell})}   \right)^{p_i} \right)^{\frac{1}{p_i}},
\end{align*}
where
\begin{align*}
\mathcal{D}_k^j:=\frac{v(B_k)^{\frac{\alpha^* }{mn}   }\omega(B_k)^{\frac{1}{mq^*}}}{\omega(B_{k+j-1})^{\frac{1}{q_i}}v(B_{k+j})^{\frac{\alpha_i}{n} }}.
\end{align*}
By Proposition \ref{prop2.2}, $\ell\leq j$ and $\alpha_i<0$, we have
\begin{align}\label{eq3.8}
\left(\frac{v(B_{k+j})}{v(B_{k+\ell})}  \right)^{\frac{\alpha_i}{n} }\lesssim \left(\frac{|B_{k+j}|}{|B_{k+\ell}|} \right)^{\frac{\alpha_i}{n} \frac{\delta_2-1}{\delta_2}} =2^{(j-\ell)\alpha_i\frac{\delta_2-1}{\delta_2}}.
\end{align}
Observe that
\begin{align*}
\mathcal{D}_k^j&= \frac{v(B_k)^{\frac{\alpha^* }{mn}   }\omega(B_k)^{\frac{1}{mq^*}}}{\omega(B_{k+j-1})^{\frac{1}{q_i}}v(B_{k+j})^{\frac{\alpha_i}{n} }}
=\left(\frac{\omega(B_{k+j})}{\omega(B_{k+j-1})} \right)^{\frac{1}{q_i}}\frac{v(B_k)^{\frac{\alpha^* }{mn}   }\omega(B_k)^{\frac{1}{mq^*}}}{\omega(B_{k+j})^{\frac{1}{q_i}}v(B_{k+j})^{\frac{\alpha_i}{n} }}\\
&\lesssim \left(\frac{|B_{k+j}|}{|B_{k+j-1}|} \right)^{\frac{\xi}{q_i}}\frac{v(B_k)^{\frac{\alpha^* }{mn}   }\omega(B_k)^{\frac{1}{mq^*}}}{\omega(B_{k+j})^{\frac{1}{q_i}}v(B_{k+j})^{\frac{\alpha_i}{n} }}
\lesssim \frac{v(B_k)^{\frac{\alpha^* }{mn}   }\omega(B_k)^{\frac{1}{mq^*}}}{\omega(B_{k+j})^{\frac{1}{q_i}}v(B_{k+j})^{\frac{\alpha_i}{n} }} \\
&=\frac{\omega(B_k)^{\frac{1}{mq^*}+\frac{\alpha^*}{mn} }}{\omega(B_{k+j})^{\frac{1}{q_i}+\frac{\alpha_i}{n} }}\frac{\omega(B_{k+j})^{\frac{\alpha_i}{n} }}{\omega(B_k)^{\frac{\alpha^*}{mn} }}\frac{v(B_k)^{\frac{\alpha^*}{mn} }}{v(B_{k+j})^{\frac{\alpha_i}{n} }}\\
&=\left(\frac{\omega(B_k)}{\omega(B_{k+j})} \right)^{\frac{1}{mq^*}+\frac{\alpha^*}{mn} }\left( \frac{\omega(B_{k+j})}{v(B_{k+j})} \right)^{\frac{\alpha
_i}{n}}\left(\frac{v(B_k)}{\omega(B_k)} \right)^{\frac{\alpha^*}{mn} }
\end{align*}
Observe that from the conditions $\frac{\alpha^*}{mn} +\frac{1}{mq^*}=\frac{\alpha_i}{n} +\frac{1}{q_i}$ and $q>mq^*$, it follows immediately that $\frac{1}{mq^*}>\frac{1}{q}>\frac{1}{q_i}$ and $\frac{\alpha^*}{mn}<\frac{\alpha_i}{n}$, for all $i=1,...,m$. Thus, the condition $\omega(B_k)\lesssim v(B_k)$, for every $k\in\mathbb{Z}$, implies that
$$\left( \frac{\omega(B_{k+j})}{v(B_{k+j})} \right)^{\frac{\alpha
_i}{n}}\lesssim \left( \frac{\omega(B_{k+j})}{v(B_{k+j})} \right)^{\frac{\alpha
^*}{mn}}. $$
Therefore, we obtain
\begin{align*}
\mathcal{D}_k^j 
\lesssim \left(\frac{\omega(B_k)}{\omega(B_{k+j})} \right)^{\frac{1}{mq^*}+\frac{\alpha^*}{mn} }\left( \frac{\omega(B_{k+j})}{\omega(B_{k})} \right)^{\frac{\alpha^*}{mn}}\left(\frac{v(B_k)}{v(B_{k+j})} \right)^{\frac{\alpha^*}{mn} }
\end{align*}
Now, let us consider two cases as follows.
\vskip 5pt
\textbf{Case 1:} $\frac{\alpha^*}{n}+\frac{1}{q^*}\leq 0$.\\
For $j\ge 1$, we get
\begin{align*}
\left(\frac{\omega(B_k)}{\omega(B_{k+j})} \right)^{\frac{1}{mq^*}+\frac{\alpha^*}{mn} }&\lesssim \left( \frac{|B_k|}{|B_{k+j}|}\right)^{\left(\frac{1}{mq^*}+\frac{\alpha^*}{mn}  \right)\xi}=2^{-j\xi n \left(\frac{1}{mq^*}+\frac{\alpha^*}{mn}  \right)}\\
\left( \frac{\omega(B_{k+j})}{\omega(B_{k})} \right)^{\frac{\alpha
^*}{mn}}&\lesssim \left( \frac{|B_{k+j}|}{|B_{k}|}\right)^{\frac{\alpha
^*}{mn}.\frac{\delta_1-1}{\delta_1}}=2^{j n\frac{\alpha^*}{mn}.\frac{\delta_1-1}{\delta_1}}=2^{j\frac{\alpha^*}{m}.\frac{\delta_1-1}{\delta_1}}\\
\left(\frac{v(B_k)}{v(B_{k+j})} \right)^{\frac{\alpha^*}{mn} }&\lesssim  \left( \frac{|B_{k}|}{|B_{k+j}|}\right)^{\frac{\alpha^*}{mn}  \eta}=2^{-j \eta n.\frac{\alpha^*}{mn} }=2^{-j\eta\frac{\alpha^*}{m}}.
\end{align*}
For $j\le 0$, we have
\begin{align*}
\left(\frac{\omega(B_k)}{\omega(B_{k+j})} \right)^{\frac{1}{mq^*}+\frac{\alpha^*}{mn} }&\lesssim \left( \frac{|B_k|}{|B_{k+j}|}\right)^{\left(\frac{1}{mq^*}+\frac{\alpha^*}{mn}  \right).\frac{\delta_1-1}{\delta_1}}=2^{-jn\left(\frac{1}{mq^*}+\frac{\alpha^*}{mn}  \right).\frac{\delta_1-1}{\delta_1}}\\
\left( \frac{\omega(B_{k+j})}{\omega(B_{k})} \right)^{\frac{\alpha
^*}{mn}}&\lesssim \left( \frac{|B_{k+j}|}{|B_{k}|}\right)^{\frac{\alpha
^*}{mn}.\xi}=2^{j\xi\frac{\alpha^*}{m}}\\
\left(\frac{v(B_k)}{v(B_{k+j})} \right)^{\frac{\alpha^*}{mn} }&\lesssim  \left( \frac{|B_{k}|}{|B_{k+j}|}\right)^{\frac{\alpha^*}{mn}  .\frac{\delta_2-1}{\delta_2}}=2^{-j\frac{\alpha^*}{m}\frac{\delta_2-1}{\delta_2}}.
\end{align*}
So, we obtain
\begin{align*}
\mathcal{D}_k^j \lesssim \begin{cases} 2^{\frac{1}{m}\left(-\frac{jn}{q^*}.\frac{\delta_1-1}{\delta_1}-j\alpha^*.\frac{\delta_1-1}{\delta_1}-j\alpha^*.\frac{\delta_2-1}{\delta_2}+j\xi\alpha^*\right)}, &j\le 0\\
2^{\frac{1}{m}\left(-j\xi\alpha^*-j\eta\alpha^*-\frac{j\xi n}{q^*}+j\alpha^*.\frac{\delta_1-1}{\delta_1}\right)},\qquad &j\ge 1. \end{cases}
\end{align*}

\textbf{Case 2:} $\frac{\alpha^*}{n}+\frac{1}{q^*} >0$.\\
Similarly to Case 1, for $j\ge 1$, we also have
\begin{align*}
\left(\frac{\omega(B_k)}{\omega(B_{k+j})} \right)^{\frac{1}{mq^*}+\frac{\alpha^*}{mn} }&\lesssim \left( \frac{|B_k|}{|B_{k+j}|}\right)^{\left(\frac{1}{mq^*}+\frac{\alpha^*}{mn}  \right).\frac{\delta_1-1}{\delta_1}}=2^{-jn\left(\frac{1}{mq^*}+\frac{\alpha^*}{mn}  \right).\frac{\delta_1-1}{\delta_1}}\\
\left( \frac{\omega(B_{k+j})}{\omega(B_{k})} \right)^{\frac{\alpha
^*}{mn}}&\lesssim \left( \frac{|B_{k+j}|}{|B_{k}|}\right)^{\frac{\alpha
^*}{mn}.\frac{\delta_1-1}{\delta_1}}=2^{j\frac{\alpha^*}{m}.\frac{\delta_1-1}{\delta_1}}\\
\left(\frac{v(B_k)}{v(B_{k+j})} \right)^{\frac{\alpha^*}{mn} }&\lesssim  \left( \frac{|B_{k}|}{|B_{k+j}|}\right)^{\frac{\alpha^*}{mn}  .\eta}=2^{-j\eta\frac{\alpha^*}{m}}.
\end{align*}
For $j\le 0$, we get
\begin{align*}
\left(\frac{\omega(B_k)}{\omega(B_{k+j})} \right)^{\frac{1}{mq^*}+\frac{\alpha^*}{mn} }&\lesssim \left( \frac{|B_k|}{|B_{k+j}|}\right)^{\left(\frac{1}{mq^*}+\frac{\alpha^*}{mn}  \right)\xi}=2^{-j\xi n \left(\frac{1}{mq^*}+\frac{\alpha^*}{mn}  \right)}\\
\left( \frac{\omega(B_{k+j})}{\omega(B_{k})} \right)^{\frac{\alpha
^*}{mn}}&\lesssim \left( \frac{|B_{k+j}|}{|B_{k}|}\right)^{\frac{\alpha
^*}{mn}.\xi}=2^{j\xi\frac{\alpha^*}{m}}\\
\left(\frac{v(B_k)}{v(B_{k+j})} \right)^{\frac{\alpha^*}{mn} }&\lesssim  \left( \frac{|B_{k}|}{|B_{k+j}|}\right)^{\frac{\alpha^*}{mn}  .\frac{\delta_2-1}{\delta_2}}=2^{-j\frac{\alpha^*}{m}.\frac{\delta_2-1}{\delta_2}}.
\end{align*}
Consequently,
\begin{align*}
\mathcal{D}_k^j \lesssim \begin{cases} 2^{\frac{1}{m}\left(-\frac{j\xi n}{q^*}-j\alpha^*.\frac{\delta_2-1}{\delta_2}\right)}, &j\le 0\\
2^{\frac{1}{m}\left(-\frac{jn}{q^*}.\frac{\delta_1-1}{\delta_1}-j\eta\alpha^*\right)},&j\ge 1. \end{cases}
\end{align*}
Now, let us estimate $\mathcal{E}_{1i}$ for  the case 1. Then, we have
\begin{align*}
\mathcal{E}_{1i}
&\lesssim  \int_{\{y:\|A_i(y)\|<1\}}\frac{|\Phi(y)|}{|y|^n}   |\det A_i^{-1}(y)|^{\frac{m\xi}{q_i}}\|A_i(y) \| ^{\frac{m\xi n}{q_i}}\times\\ 
 &\;\;\;\;\;\;\;\;\;\;\;\times\left( \sup_{k_0\in\mathbb Z}v(B_{k_0})^{-\frac{\lambda_i}{n}}\sum\limits_{k=-\infty}^{k_0} 
 \left(2^{\frac{1}{m}\left(-\frac{jn}{q^*}\frac{\delta_1-1}{\delta_1}-j\alpha^*\frac{\delta_1-1}{\delta_1}-j\alpha^*\frac{\delta_2-1}{\delta_2}+j\xi\alpha^*\right)}\right.\right.\times\\
&\;\;\;\;\;\;\;\;\;\;\;\;\;\;\;\;\;\;\;\;\;\;\;\;\;\;\;\;\;\;\;\;\;\;\;\;\;\;\;\;\;\;\;\;\left.\left.\times\sum\limits_{\ell=-\infty}^j 2^{(j-\ell)\alpha_i\frac{\delta_2-1}{\delta_2}}  v(B_{k+\ell})^{\frac{\alpha_i}{n} }\|f_i\|_{L_\omega^{q_i}(C_{k+\ell})}          \right)^{p_i}  \right)^{\frac{m}{p_i}} dy      \\
&\;\;\;\;\;+\int_{\{y:\|A_i(y)\|\geq 1\}}\frac{|\Phi(y)|}{|y|^n}   |\det A_i^{-1}(y)|^{\frac{m\xi}{q_i}}\|A_i(y) \| ^{\frac{m\xi n}{q_i}}\times\\ 
&\;\;\;\;\;\;\;\;\;\;\times \left(\sup_{k_0\in\mathbb Z}v(B_{k_0})^{-\frac{\lambda_i}{n}} \sum\limits_{k=-\infty}^{k_0} \left(2^{\frac{1}{m}\left(-j\xi\alpha^*-j\eta\alpha^*-\frac{j\xi n}{q^*}+j\alpha^*\frac{\delta_1-1}{\delta_1}\right)}\right.\right.\times\\
&\;\;\;\;\;\;\;\;\;\;\;\;\;\;\;\;\;\;\;\;\;\;\;\;\;\;\;\;\;\;\;\;\;\;\;\;\;\;\;\;\;\;\;\;\;\;\;\;\;\left.\left.\times\sum\limits_{\ell=-\infty}^j 2^{(j-\ell)\alpha_i\frac{\delta_2-1}{\delta_2}}  v(B_{k+\ell})^{\frac{\alpha_i}{n} }\|f_i\|_{L_\omega^{q_i}(C_{k+\ell})}          \right)^{p_i}  \right)^{\frac{m}{p_i}} dy      \\
&\lesssim  \int_{\{y:\|A_i(y)\|<1\}}\frac{|\Phi(y)|}{|y|^n}   |\det A_i^{-1}(y)|^{\frac{m\xi}{q_i}}\|A_i(y) \| ^{\frac{m\xi n}{q_i}}\|A_i(y) \|^{-\left(\left(\frac{n}{q^*}+\alpha^*\right)\frac{\delta_1-1}{\delta_1}+\alpha^*\frac{\delta_2-1}{\delta_2}-\xi\alpha^*\right)}\times\\
&\;\;\;\;\;\;\;\;\;\times
 \left(\sup_{k_0\in\mathbb Z}v(B_{k_0})^{-\frac{\lambda_i}{n}}\sum\limits_{k=-\infty}^{k_0}            \left(
\sum\limits_{\ell=-\infty}^j 2^{(j-\ell)\alpha_i\frac{\delta_2-1}{\delta_2}}  v(B_{k+\ell})^{\frac{\alpha_i}{n} }\|f_i\|_{L_\omega^{q_i}(C_{k+\ell})}            \right)^{p_i} \right)^{\frac{m}{p_i}}dy+\\
&\;\;\;\;\; + \int_{\{y:\|A_i(y)\|\geq 1\}}\frac{|\Phi(y)|}{|y|^n}   |\det A_i^{-1}(y)|^{\frac{m\xi}{q_i}}\|A_i(y) \| ^{\frac{m\xi n}{q_i}}\|A_i(y) \|^{-\left(\left(\frac{n}{q^*}+\alpha^*\right)\xi+\eta\alpha^*-\alpha^*\frac{\delta_1-1}{\delta_1}\right)}\times\\
&\;\;\;\;\;\;\;\;\;\times
 \left(\sup_{k_0\in\mathbb Z}v(B_{k_0})^{-\frac{\lambda_i}{n}}\sum\limits_{k=-\infty}^{k_0}           \left(
\sum\limits_{\ell=-\infty}^j 2^{(j-\ell)\alpha_i\frac{\delta_2-1}{\delta_2}}  v(B_{k+\ell})^{\frac{\alpha_i}{n} }\|f_i\|_{L_\omega^{q_i}(C_{k+\ell})}
\right)^{p_i} \right)^{\frac{m}{p_i}}dy.
\end{align*}
Notice that $\|A_i(y)\|^{-1}\simeq 2^{-j}$ and $
\sum\limits_{\ell=-\infty}^j 2^{(j-\ell)\alpha_i.\frac{\delta_2-1}{\delta_2}}=\frac{1}{1-2^{\alpha_i.\frac{\delta_2-1}{\delta_2}}},$ for all $\alpha_i<0$, and 
$$v(B_{k_0})^{-\frac{\lambda_i}{n}}=\left(\frac{v(B_{k_0+\ell})}{v(B_{k_0})}\right)^{\frac{\lambda_i}{n}}v(B_{k_0+\ell})^{-\frac{\lambda_i}{n}}\leq \left(\frac{v(B_{k_0+j})}{v(B_{k_0})}\right)^{\frac{\lambda_i}{n}}v(B_{k_0+\ell})^{-\frac{\lambda_i}{n}},$$
for all $\ell\leq j$. It is easy to show that
\begin{align*}
\left(\frac{v(B_{k_0+j})}{v(B_{k_0})}\right)^{\frac{\lambda_i}{n}}\lesssim \begin{cases} 2^{j\lambda_i\frac{\delta_2-1}{\delta_2}}, &j\le 0\\
2^{j\eta\lambda_i},\qquad &j\ge 1. \end{cases}
\end{align*}
Then, by the Minkowski inequality for $p_i\geq 1$, it follows that
\begin{align*}
\mathcal{E}_{1i} 
 &\lesssim  \int_{\{y:\|A_i(y)\|<1\}}\frac{|\Phi(y)|}{|y|^n} |\det A_i^{-1}(y)|^{\frac{m\xi}{q_i}}\|A_i(y) \| ^{\frac{m\xi n}{q_i}}\|A_i(y) \|^{-\left(\left(\frac{n}{q^*}+\alpha^*\right)\frac{\delta_1-1}{\delta_1}+(\alpha^*-m\lambda_i)\frac{\delta_2-1}{\delta_2}-\xi\alpha^*\right)}\times\\
&\;\;\;\;\times \left(\sup_{k_0\in\mathbb Z}
\sum\limits_{\ell=-\infty}^j 2^{(j-\ell)\alpha_i\frac{\delta_2-1}{\delta_2}}  \left(v(B_{k_0+\ell})^{-\frac{\lambda_i}{n} }\sum\limits_{k=-\infty}^{k_0}          
 v(B_{k+\ell})^{\frac{\alpha_i p_i}{n} }\|f_i\|^{p_i}_{L_\omega^{q_i}(C_{k+\ell})}            \right)^{\frac{1}{p_i}} \right)^{m}dy+\\
& + \int_{\{y:\|A_i(y)\|\geq 1\}}\frac{|\Phi(y)|}{|y|^n}   |\det A_i^{-1}(y)|^{\frac{m\xi}{q_i}}\|A_i(y) \| ^{\frac{m\xi n}{q_i}}\|A_i(y) \|^{-\left(\xi(\frac{n}{q^*}+\alpha^*)+\eta(\alpha^*-m\lambda_i)-\alpha^*\frac{\delta_1-1}{\delta_1}\right)}\times\\
&\;\;\;\;\;\;\;\;\;\;\;\;\;\;\;\times \left(\sup_{k_0\in\mathbb Z}\sum\limits_{\ell=-\infty}^j 2^{(j-\ell)\alpha_i\frac{\delta_2-1}{\delta_2}} v(B_{k_0+\ell})^{-\frac{\lambda_i}{n}} \left(\sum\limits_{k=-\infty}^{k_0}           
 v(B_{k+\ell})^{\frac{\alpha_i p_i}{n} }\|f_i\|^{p_i}_{L_\omega^{q_i}(C_{k+\ell})}            \right)^{\frac{1}{p_i}} \right)^{m}dy\\
&\lesssim 
\left(\int_{\{y:\|A_i(y)\|<1\}}\frac{|\Phi(y)|}{|y|^n}   |\det A_i^{-1}(y)|^{\frac{m\xi}{q_i}}\|A_i(y) \| ^{\frac{m\xi n}{q_i}}\|A_i(y) \|^{-\left(\left(\frac{n}{q^*}+\alpha^*\right)\frac{\delta_1-1}{\delta_1}+(\alpha^*-m\lambda_i)\frac{\delta_2-1}{\delta_2}-\xi\alpha^*\right)}dy\right. \\
&\;\;\;\;\; \left.+\int_{\{y:\|A_i(y)\|\geq 1\}}\frac{|\Phi(y)|}{|y|^n}   |\det A_i^{-1}(y)|^{\frac{m\xi}{q_i}}\|A_i(y) \| ^{\frac{m\xi n}{q_i}}\|A_i(y) \|^{-\left(\xi(\frac{n}{q^*}+\alpha^*)+\eta(\alpha^*-m\lambda_i)-\alpha^*\frac{\delta_1-1}{\delta_1}\right)}dy\right)\times\\
&\;\;\;\;\;\;\;\;\;\;\;\;\;\;\;\;\;\;\;\; \;\;\;\;\;\;\;\;\;\;  \times \|f_i\|^m_{M\dot{K}_{v,\omega}^{\alpha_i, \lambda_i, p_i,q_i}(\mathbb{R}^n)}.
\end{align*}
Finally, we obtain
\begin{align*}
\|{\mathcal{H}}_{\Phi,\vec{A}}(\vec{f})\|_{M\dot{K}_{v,\omega}^{\alpha^*,\lambda^*,  p,q^*}(\mathbb{R}^n)}&\lesssim \prod_{i=1}^m \left(\int_{\mathbb{R}^n}\frac{|\Phi(y)|}{|y|^n}  |\det A_i^{-1}(y)|^{\frac{m\xi}{q_i}}\|A_i(y) \| ^{\frac{m\xi n}{q_i}}\mathcal{B}_{1i}(y)dy\right)^{\frac{1}{m}}\\
 &\;\;\;\;\;\;\;\;\;\times\prod_{i=1}^m \|f_i\|_{M\dot{K}_{v,\omega}^{\alpha_i,\lambda_i, p_i,q_i}(\mathbb{R}^n)}\\
 &\lesssim\mathcal{C}_{5.1}\prod_{i=1}^m \|f_i\|_{M\dot{K}_{v,\omega}^{\alpha_i,\lambda_i, p_i,q_i}(\mathbb{R}^n)}.
\end{align*}
By the same arguments as case 1, we also obtain the case 2 with the condition  $\frac{\alpha^*}{n}+\frac{1}{q^*} >0$. More precisely, the following is true.
\begin{align*}
\|{\mathcal{H}}_{\Phi,\vec{A}}(\vec{f})\|_{M\dot{K}_{v,\omega}^{\alpha^*,\lambda^*,p,q^*}(\mathbb{R}^n)}&\lesssim \prod_{i=1}^m \left(\int_{\mathbb{R}^n}\frac{|\Phi(y)|}{|y|^n} |\det A_i^{-1}(y)|^{\frac{m\xi}{q_i}}\|A_i(y) \| ^{\frac{m\xi n}{q_i}}\mathcal{B}_{2i}(y)dy\right)^{\frac{1}{m}} \\
 &\;\;\;\;\;\;\;\;\;\;\;\;\;\;\times\prod_{i=1}^m \|f_i\|_{M\dot{K}_{v,\omega}^{\alpha_i,\lambda_i,p_i,q_i}(\mathbb{R}^n)}\\
  &\lesssim\mathcal{C}_{5.2}\prod_{i=1}^m \|f_i\|_{M\dot{K}_{v,\omega}^{\alpha_i,\lambda_i, p_i,q_i}(\mathbb{R}^n)}.
\end{align*}
Therefore, the proof of the theorem is completed.
\end{proof}

In the special case $v=\omega$, we also obtain some sufficient conditions for the boundedness of ${\mathcal{H}}_{\Phi,\vec{A}}$ on the weighted Morrey-Herz spaces
 which are actually better than ones of Theorem \ref{Morrey-Herz-Muckenhoupt}. More precisely, we have the following result.
\begin{theorem}\label{Morrey-Herz-Muckenhoupt1}
Let $1\le q^*, \xi<\infty, \alpha_i<0$, $\lambda_i\geq 0$, for all $i=1,...,m$, and $\omega\in A_\xi$ with the finite reverse H\"{o}lder critical index $r_\omega$.  Assume that $q>q^*\xi r'_\omega, \delta\in (1,r_\omega)$ and $\alpha^*, \lambda^* $ are two real numbers satisfying
$$\lambda^*=\lambda_1+\cdots+\lambda_m \textit{ and  } \;\frac{\alpha^*}{n} +\frac{1}{q^*}=\frac{\sum_{i=1}^m\alpha_i}{n} +\frac{1}{q}.$$
If $\frac{\alpha_i}{n}+ \frac{1}{q_i}\le 0$, for all $i=1,...,m$, and
\begin{align*}
\mathcal{C}_{6.1} = \prod_{i=1}^m \left(\int_{\mathbb{R}^n}\frac{|\Phi(y)|}{|y|^n}   |\det A_i^{-1}(y)|^{\frac{m\xi}{q_i}}\|A_i(y) \| ^{\frac{m\xi n}{q_i}}.\Psi_{1i}(y)dy\right)^{\frac{1}{m}}<\infty,
\end{align*}
where
\begin{align*}
\Psi_{1i}(y)&=\|A_i(y) \|^{m\left(\lambda_i-n\left(\frac{\alpha_i}{n}+\frac{1}{q_i}\right)\left(\frac{\delta-1}{\delta}\right)\right)}\chi_{ \{y\in \mathbb{R}^n:\|A_i(y)\|<1\}}\;+\\
 &\;\;\;+
\|A_i(y) \|^{m\xi\left(\lambda_i-n\left(\frac{\alpha_i}{n}+\frac{1}{q_i}\right)\right)}\chi_{ \{y\in \mathbb{R}^n:\|A_i(y)\|\ge 1\}},
\end{align*}
or $\frac{\alpha_i}{n}+ \frac{1}{q_i} > 0$, for all $i=1,...,m$, and 
\begin{align*}
\mathcal{C}_{5.2} =  \prod_{i=1}^m \left(\int_{\mathbb{R}^n}\frac{|\Phi(y)|}{|y|^n}   |\det A_i^{-1}(y)|^{\frac{m\xi}{q_i}}\|A_i(y) \| ^{\frac{m\xi n}{q_i}}.\Psi_{2i}(y)dy\right)^{\frac{1}{m}}<\infty,
\end{align*}
where
\begin{align*}
\Psi_{2i}(y)&=\|A_i(y) \|^{m\left(\lambda_i\left(\frac{\delta-1}{\delta}\right)-n\xi\left(\frac{\alpha_i}{n}+ \frac{1}{q_i}\right)\right)}\chi_{ \{y\in \mathbb{R}^n:\|A_i(y)\|<1\}}\;+ \\
&\;\;+\|A_i(y) \|^{m\left(\lambda_i\xi-n\left(\frac{\alpha_i}{n}+ \frac{1}{q_i}\right)\left(\frac{\delta-1}{\delta}\right)\right)}\chi_{ \{y\in \mathbb{R}^n:\|A_i(y)\|\ge 1\}},
\end{align*}
then we have ${\mathcal{H}}_{\Phi,\vec{A}}$ is bounded from $\prod_{i=1}^m  M\dot{K}_{p_i,q_i}^{\alpha_i, \lambda_i}(\omega,\mathbb{R}^n)$ to $M\dot{K}_{p,q^*}^{\alpha^*,\lambda^*}(\omega,\mathbb{R}^n)$.
\end{theorem}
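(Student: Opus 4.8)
The plan is to run the argument of Theorem~\ref{Morrey-Herz-Muckenhoupt} in the single-weight situation $v=\omega$, where the two Muckenhoupt/reverse-H\"older indices collapse to one (so that $\eta=\xi$, $r_v=r_\omega$, and one may take $\delta_1=\delta_2=\delta$), and where the exponent constraint relaxes from the pointwise relation $\frac1m\big(\frac{\alpha^*}{n}+\frac1{q^*}\big)=\frac{\alpha_i}{n}+\frac1{q_i}$ to the additive one $\frac{\alpha^*}{n}+\frac1{q^*}=\sum_{i=1}^m\big(\frac{\alpha_i}{n}+\frac1{q_i}\big)$. First I would reproduce, essentially verbatim, the reduction leading to \eqref{eq3.7}: by the Minkowski inequality, the reverse H\"older property (using $q>q^*\xi r'_\omega$ to choose $r\in(1,r_\omega)$ with $\frac{q}{\xi}=q^*r'$), the H\"older inequality with $\frac{\xi}{q}=\sum_i\frac{\xi}{q_i}$, the change of variables $z=A_i(y)x$, and Proposition~\ref{prop2.1}, one controls $\|{\mathcal{H}}_{\Phi,\vec{A}}(\vec f)\chi_k\|_{L^{q^*}_\omega(\mathbb{R}^n)}$ by an integral of $\frac{|\Phi(y)|}{|y|^n}\prod_i|\det A_i^{-1}(y)|^{\xi/q_i}\|A_i(y)\|^{\xi n/q_i}$ against local $L^{q_i}_\omega$-norms of $f_i$ on dilated balls. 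Substituting into the Morrey--Herz norm and applying the H\"older inequality with $\sum_i\frac1{p_i}=\frac1p$ together with the product H\"older inequality for $\frac1m+\cdots+\frac1m=1$, and using $\lambda^*=\lambda_1+\cdots+\lambda_m$, reduces the whole estimate to bounding $\prod_{i=1}^m\mathcal{E}_{1i}^{1/m}$, exactly as in the proof of Theorem~\ref{Morrey-Herz-Muckenhoupt}.

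The heart of the matter is then the pointwise estimate $\mathcal{C}_{1i}(y)\lesssim \Psi_{1i}(y)^{1/m}\,\|f_i\|_{M\dot{K}^{\alpha_i,\lambda_i}_{p_i,q_i}(\omega,\mathbb{R}^n)}$ (and its analogue with $\Psi_{2i}$). I would establish this by the dyadic-annulus device. Fixing $i$ and choosing $j=j(i,y)$ with $2^{j-1}\le\|A_i(y)\|<2^j$, so that $B_{k+j-1}\subseteq B(0,\|A_i(y)\|2^k)\subset B_{k+j}$, I would decompose $B(0,\|A_i(y)\|2^k)\subset\bigcup_{\ell\le j}C_{k+\ell}$, insert the weight $\omega(B_{k+\ell})^{\alpha_i/n}$, and then exploit that, since $v=\omega$, the mixed $v/\omega$ ratios that appear in Theorem~\ref{Morrey-Herz-Muckenhoupt} cancel identically, leaving a single weight ratio $\mathcal{D}_k^j$. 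Replacing $\omega(B_{k+j-1})$ by $\omega(B_{k+j})$ via reverse H\"older and then invoking Proposition~\ref{prop2.2}, together with the estimate \eqref{eq3.8} and the hypothesis $\alpha_i<0$, each such ratio is dominated by an explicit power of $2^j\simeq\|A_i(y)\|$.

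From here the estimate bifurcates according to the sign of $\frac{\alpha_i}{n}+\frac1{q_i}$ and, within each branch, according to the dyadic regime $\|A_i(y)\|<1$ (i.e.\ $j\le 0$) versus $\|A_i(y)\|\ge 1$ (i.e.\ $j\ge 1$); in one regime Proposition~\ref{prop2.2} is applied with the reverse-H\"older exponent $\frac{\delta-1}{\delta}$ and in the other with the $A_\xi$ exponent $\xi$, depending on which of the two nested balls is larger. Summing the geometric series in $\ell$ (convergent precisely because $\alpha_i<0$ and $\delta>1$, as in $\sum_{\ell\le j}2^{(j-\ell)\alpha_i\frac{\delta-1}{\delta}}<\infty$), and controlling the normalisation ratio $\big(v(B_{k_0+j})/v(B_{k_0})\big)^{\lambda_i/n}$ by the same device (using $\lambda_i\ge 0$), one recovers $\|f_i\|_{M\dot{K}^{\alpha_i,\lambda_i}_{p_i,q_i}(\omega,\mathbb{R}^n)}$ by the Minkowski inequality for $p_i\ge 1$, with the accumulated powers of $\|A_i(y)\|$ assembling exactly into $\Psi_{1i}(y)$ when $\frac{\alpha_i}{n}+\frac1{q_i}\le 0$ and into $\Psi_{2i}(y)$ when $\frac{\alpha_i}{n}+\frac1{q_i}>0$. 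Feeding these back into $\prod_i\mathcal{E}_{1i}^{1/m}$ then gives the asserted bounds by $\mathcal{C}_{6.1}$ and $\mathcal{C}_{5.2}$, respectively; the second case follows by the same arguments as the first.

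I expect the main obstacle to be the exponent bookkeeping rather than any new analytic idea: one must transport the exponent of $\|A_i(y)\|$ correctly through the \emph{two} independent dichotomies (the sign of $\frac{\alpha_i}{n}+\frac1{q_i}$ and the size of $\|A_i(y)\|$), checking at each stage that the powers of $\omega(B_k)$ and $\omega(B_{k+j})$ coming from $\mathcal{D}_k^j$, the annular weights, and the normalisation combine into a single clean power of $\|A_i(y)\|$. This reassembly is exactly the step where Theorem~\ref{Morrey-Herz-Muckenhoupt} used the stronger pointwise exponent relation; here one must verify that the single-weight hypothesis $v=\omega$ together with the weaker additive relation still suffices, which is also the reason the resulting weights $\Psi_{1i},\Psi_{2i}$ are simpler than $\mathcal{B}_{1i},\mathcal{B}_{2i}$. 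Each structural hypothesis is consumed at a definite point: $\alpha_i<0$ and $\delta\in(1,r_\omega)$ drive the geometric summation in $\ell$ and $j$; the pair $q>q^*\xi r'_\omega$ and $\omega\in A_\xi$ powers the reverse-H\"older and Proposition~\ref{prop2.1} steps; and $\lambda_i\ge 0$ together with $\omega(B_k)\lesssim v(B_k)$ controls the normalisation ratio in the supremum over $k_0$.
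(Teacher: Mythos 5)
Your proposal coincides with the paper's own proof of this theorem, which consists solely of the observation that the additive exponent relation yields $\omega(B_k)^{\frac{\alpha^*}{n}+\frac{1}{q^*}}=\prod_{i=1}^m\omega(B_k)^{\frac{\alpha_i}{n}+\frac{1}{q_i}}$, followed by the instruction to repeat the argument of Theorem \ref{Morrey-Herz-Muckenhoupt} with $v=\omega$, $\eta=\xi$, $\delta_1=\delta_2=\delta$ --- exactly the plan you describe, including the reduction to \eqref{eq3.7}, the dyadic choice of $j$ with $2^{j-1}\le\|A_i(y)\|<2^j$, the estimate of $\mathcal{D}_k^j$ via Propositions \ref{prop2.1}--\ref{prop2.2}, the geometric series in $\ell$ driven by $\alpha_i<0$, and the two dichotomies. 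One bookkeeping caveat: when you carry this out, the normalisation ratio in the regime $\|A_i(y)\|<1$ contributes $\|A_i(y)\|^{\lambda_i\frac{\delta-1}{\delta}}$ (exactly as it does in $\mathcal{B}_{1i}$ of Theorem \ref{Morrey-Herz-Muckenhoupt} and in $\Psi_{2i}$ here), so in the first case your exponents assemble into $m\frac{\delta-1}{\delta}\bigl(\lambda_i-n\bigl(\tfrac{\alpha_i}{n}+\tfrac{1}{q_i}\bigr)\bigr)$ rather than the stated $m\bigl(\lambda_i-n\bigl(\tfrac{\alpha_i}{n}+\tfrac{1}{q_i}\bigr)\tfrac{\delta-1}{\delta}\bigr)$; this discrepancy appears to be a typo in $\Psi_{1i}$ (it is inconsistent with both $\mathcal{B}_{1i}$ and $\Psi_{2i}$), not a flaw in your approach.
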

\begin{proof}
Observe that from the condition 
$$\frac{\alpha^*}{n} +\frac{1}{q^*}=\frac{\sum_{i=1}^m\alpha_i}{n} +\frac{1}{q}=\frac{\sum_{i=1}^m\alpha_i}{n}+\sum_{i=1}^m\frac{1}{q_i},$$
we have 
$$\omega(B_k)^{\left(\frac{\alpha^*}{n}+\frac{1}{q^*}\right)}=\prod_{i=1}^m\omega(B_k)^{\left(\frac{\alpha_i}{n}+\frac{1}{q_i}\right)}.$$
The other arguments are proved in the same way as Theorem \ref{Morrey-Herz-Muckenhoupt}. Thus, its proof is omitted here and left the details to the interested reader.
\end{proof}

As a consequence, by letting $\lambda_1=\cdots=\lambda_m=0$, we also obtain some sufficient conditions the same as Theorem \ref{Morrey-Herz-Muckenhoupt} and Theorem \ref{Morrey-Herz-Muckenhoupt1} for the boundedness of the  multilinear Hausdorff operators on the two weighted Herz spaces with the Muckenhoupt weights.
%%%%%%%%%%%%%%%%%%%%%%%%%%%%%%%%%%
%%%%%%%%%%%%%%%%%%%%%%%%%%%%%%%%%%
\vskip 10pt
{\textbf{Acknowledgments}}. The first author of this paper is supported by the Vietnam National Foundation for Science and Technology Development (NAFOSTED) under Grant No 101.02-2014.51.

\bibliographystyle{amsplain}

\end{document}